\newtheorem{theorem}{Theorem}[section]
\newtheorem{lemma}{Lemma}[section]
\newtheorem{proposition}{Proposition}[section]
\newtheorem{claim}{Claim}[section]
\newtheorem*{theoremA}{Theorem A}
\newtheorem*{theoremB}{Theorem B}
\newtheorem*{theoremC}{Theorem C}
\newtheorem*{theoremM}{McMullen's Theorem}
\newtheorem*{lemmaA}{Surgery Principle}
\newtheorem*{conj}{ Goldberg-Milnor Conjecture}
\newcommand{\omC}{\widehat{\mathbb{C}}}
\newcommand{\sm}{\setminus}
\newcommand{\wh}{\widehat}
\newcommand{\g}{\gamma}
\begin{document}
	\title{On the parabolic Fatou domains
	}
	\author{Ning Gao
		\and Yan Gao\thanks{The second author is supported by the National Key R\&D Program of China Grant Nos. 2021YFA1003203, the NSFC Grant Nos. 12131016 and
			12322104, and the NSFGD Grant Nos. 2023A1515010058.}
		\and Wenjuan Peng\thanks{The third author is supported by the NSFC Grant Nos. 12122117, 12271115 and 12288201.}}
	\date{\today}
	\maketitle
\begin{abstract}
Let $f$ be a rational map with an infinitely-connected fixed parabolic Fatou domain $U$.  We prove that there exists a rational map $g$ with a completely invariant parabolic Fatou domain $V$, such that $(f,U)$ and $(g,V)$ are conformally conjugate, and each non-singleton Julia component of $g$ is a  Jordan curve which bounds a  superattracting Fatou domain of $g$ containing  at most one postcritical point.  Furthermore, we show that if the Julia set of  $f$ is a Cantor set,  then the parabolic Fatou domain can be perturbed into an attracting one without affecting the topology of the Julia set.
\end{abstract}
	
\section{Introduction}
Let $f:\widehat{\mathbb{C}}\rightarrow \widehat{\mathbb{C}}$ be a rational map of degree $d\geq 2$. The {\bf Fatou set} $F(f)$ is  the set of points $z\in\widehat{\mathbb{C}}$ such that the iteration sequence $\{f^n\}_{n =1}^{\infty}$ forms a normal family in a neighborhood of $z$. The complement of the Fatou set is the  {\bf Julia set} $J(f)$.  By definition, the Fatou set is open while the Julia set is closed. Moreover, they are both completely invariant under the iteration of $f$, i.e., $f^{-1}(F(f))=F(f)$ and $f^{-1}(J(f))=J(f)$. Refer to \cite{M} for more properties of Fatou and Julia sets. The {\bf postcritical set} of $f$ is defined as
\[P(f)=\overline{\{f^n(c)\ | \ f'(c)=0, n\geq1\}}.\]
	
A connected component of $F(f)$ is called a {\bf Fatou domain}. Since $F(f)$ is open and completely invariant, the map $f$ sends a Fatou domain onto a Fatou domain. Therefore, a Fatou domain $U$ is either {\bf preperiodic}, i.e., $f^m(U)=f^{\ell+m}(U)$ for some $\ell\geq0,m\geq 1$; or {\bf wandering}, otherwise. Furthermore, a preperiodic domain is called {\bf periodic} if $\ell=0$.
	
By the works of Fatou, Siegel, Arnold and Herman (see \cite{F,S,H}), there are four types of periodic Fatou domains: \textbf{(super)attracting, parabolic, Siegel disk} and \textbf{Herman ring}. A fundamental result  in complex dynamics, due to Sullivan, asserts that rational maps have no wandering Fatou domains. Then the classification of Fatou domains is complete.
	
Suppose that $R_1$ and $R_2$ are two rational maps, and $D_1$ and $D_2$  are two sets in $\omC$ with $R_1(D_1)=D_1$ and  $R_2(D_2)=D_2$. We say $(R_1, D_1)$ and $(R_2, D_2)$ are topologically (quasiconformally or holomorphically) conjugate if there exists a topological (quasiconformal or conformal) map $\phi$ from $D_1$ onto $D_2$ such that $\phi\circ R_1=R_2\circ \phi$ on $D_1$. We call the map $\phi$ a topological (quasiconformal or conformal) conjugation.
	
A natural follow-up question would be to find holomorphic models for each type of periodic Fatou domain. Roughly speaking, for a rational map $f$ with a fixed Fatou domain $U$, we call $g$  a {\bf (holomorphic) model} of $(f,U)$ if $g$ is a  ``canonical'' rational map satisfying the following two properties:
\begin{itemize}
\item $g$ has a completely invariant Fatou domain $U_g$ such that $(f,U)$ and  $(g,U_g)$ are conformally conjugate ;
\item $g$ is unique up to a conformal conjugation, i.e., if $h$ is another canonical rational map satisfying the above property, then $g$ and $h$ are conformally conjugate on $\widehat{\mathbb{C}}$.
\end{itemize}

It is known that if $U$ is a Siegel disk or Herman ring of $f$, the model for $(f, U)$ is an irrational rotation. A fixed attracting or parabolic Fatou domain $U$ is either simply or infinitely-connected, and when $U$ is a simply-connected, a suitable Blaschke product serves as the model for $(f, U)$.	

 In \cite[Theorem 1.1]{CP}, Cui and Peng established the model for infinitely-connected attracting Fatou domains. The canonical maps adapted there are \textbf{simple attracting maps}: any such map $g$ has a completely invariant attracting Fatou domain $U_g$, where every non-singleton component of $\partial U_g$ is a quasi-circle. This quasi-circle bounds an eventually superattracting Fatou domain that contains at most one postcritical point.
\begin{theoremA}[Cui-Peng]
Let $U$ be an infinitely-connected fixed attracting Fatou domain of a rational map $f$. Then there exists a simple attracting map  $g$ as a  model of $(f,U)$, i.e.,
\begin{itemize}
\item [(1)] $(f,U)$ and  $(g,U_g)$ are conformally conjugate;
\item [(2)] such $g$ is unique up to a conformal conjugation.
\end{itemize}
\end{theoremA}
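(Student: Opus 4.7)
My plan is to produce $g$ by a quasiconformal surgery that leaves $(f,U)$ conformally unchanged on a neighborhood of $U$ and replaces the dynamics on each non-degenerate complementary component by a standard superattracting Blaschke model. Let $\{E_\alpha\}_{\alpha\in\mc A}$ denote the non-degenerate connected components of $\omC\sm U$; there are countably infinitely many since $U$ is infinitely-connected. The map $f$ permutes the $E_\alpha$, and I would partition $\mc A$ into grand orbits and separate them into \emph{passive} orbits (no representative meets $P(f)$) and \emph{active} orbits. Each passive component will be collapsed to a point inside a small topological disk, and each active component $E_\alpha$ will be replaced by a closed Jordan disk $\ol D_\alpha$ lying in a small neighborhood of $E_\alpha$ in $\omC\sm U$.

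On each active grand orbit, represented by a periodic cycle $E_{\alpha_0}\to\cdots\to E_{\alpha_{p-1}}\to E_{\alpha_0}$, I would install a superattracting dynamics on the cycle $\ol D_{\alpha_0}\to\cdots\to\ol D_{\alpha_{p-1}}\to\ol D_{\alpha_0}$ via a Blaschke product whose degree is $1$ plus the number of grand-orbit postcritical points absorbed as critical points, and extend the dynamics to the strict-preimage tree by pullback. Across each thin annulus between $\pa E_\alpha$ and $\pa D_\alpha$, interpolate by a quasiconformal map whose dilatation is controlled by the modulus of a collar of $\pa E_\alpha$ inside $U$. The result is a quasiregular map $F:\omC\to\omC$ of degree $\deg f$, holomorphic on a neighborhood of $U$ and on each $\ol D_\alpha$. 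I would then define an $F$-invariant Beltrami differential $\mu$ by setting $\mu=0$ on $U$ and on $\bigcup_\alpha \ol D_\alpha$ and pulling back across the gluing annuli. If the surgery dilatation is uniformly $<1$, then $\|\mu\|_\infty<1$, and the Measurable Riemann Mapping Theorem yields a quasiconformal $\phi$ with $\phi_*\mu=0$; then $g:=\phi\circ F\circ\phi^{-1}$ is holomorphic, hence rational. By construction $g$ is a simple attracting map with completely invariant attracting domain $\phi(U)$ and quasi-circle boundary components $\phi(\pa D_\alpha)$, each bounding a superattracting domain containing at most one postcritical point.

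For uniqueness, given two simple attracting models $(g_j,U_{g_j})$ with conformal conjugations $\varphi_j:U\to U_{g_j}$, I would extend $\varphi_2\circ\varphi_1^{-1}$ across each non-singleton boundary quasi-circle by matching the Böttcher coordinates of the superattracting disks on the inside (rigid up to a root of unity), and propagate by the dynamics along the grand-orbit tree; passive singleton components impose no further constraint, so the combined map is conformal on $\omC$ and conjugates $g_1$ to $g_2$. The main obstacle I anticipate is showing that the surgery dilatation can be made uniformly $<1$ over infinitely many active components: this requires a uniform lower bound on the conformal moduli of collars of $\pa E_\alpha$ inside $U$, which I would extract from John-type regularity of $\pa U$ inherited from the attracting dynamics near the fixed point. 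A secondary technical point is the bookkeeping of Blaschke degrees and critical orbits to ensure that $F$ has total degree $\deg f$ and that the postcritical dynamics of $g$ faithfully tracks that of $f$.
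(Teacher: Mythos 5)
Your proposal is in the right spirit---replace complementary components by standard superattracting Blaschke models via quasiconformal surgery---but the execution differs from Cui--Peng's and contains gaps that the paper's route is designed to avoid. Cui--Peng do \emph{not} perform one global surgery on all complementary components at once. As Section 3 of this paper (which adapts their construction) makes explicit, the surgery is done depth by depth: at each depth $n$ one works with the \emph{finitely many} depth-$n$ puzzle pieces (the components of $\mathbb{C}\setminus U_{N+n}$), replaces each by a disk with a Blaschke-type map (Lemmas~\ref{lem2.3}--\ref{lem2.4}), interpolates on the \emph{finitely many} inter-puzzle annuli, applies the Surgery Principle to get a rational map $f_n$, and then passes to the limit $n\to\infty$ using normal families and Lemma~\ref{converge}. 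This circumvents precisely the obstacle you flag as your ``main obstacle'': a uniform bound on the dilatation over infinitely many gluing annuli. You propose to extract such a bound from ``John-type regularity of $\partial U$'', but you give no argument that the collar moduli are uniformly bounded below, and this is not at all obvious (the complementary components can be very thin and tangent-like near the attracting dynamics). In Cui--Peng's scheme there is never an infinite family of annuli to control.

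A second, more structural gap is the ``collapse passive components to a point inside a small disk'' step. A quasiconformal map of $\widehat{\mathbb{C}}$ cannot collapse a non-degenerate continuum to a point, so this cannot be a surgery operation. What actually happens is that one replaces \emph{every} depth-$n$ complementary piece by a disk carrying a Blaschke model (of degree $1$ on passive pieces), and it is a \emph{consequence}---via Lemma~\ref{lem:component}(1) or, in the paper's language, McMullen's Theorem together with the degree bookkeeping of Claim~\ref{claim:3}---that those components of $\partial U_g$ whose orbit never hits a periodic critical element end up being singletons in the limit rational map $g$. This is a theorem about $g$, not a step of the surgery. Relatedly, for the Surgery Principle you must check that each $F$-orbit passes through the non-holomorphic locus at most once; with infinitely many annuli scattered near $\partial U$ this is not automatic from your description (it is automatic in the nested $V_n\supset\widetilde{V}_{n-1}$ picture of Section~\ref{sec:extend}).

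On uniqueness, your Böttcher-matching idea is the right heuristic, but it is incomplete in two places: you need continuity of the extended conjugacy at the \emph{singleton} boundary components (which requires a shrinking/puzzle argument of the type used in the proof of Theorem~\ref{thm:middle}), and, after producing a topological conjugacy, you need a rigidity input---the Julia set of a simple attracting map has measure zero and carries no invariant line field---to promote the resulting quasiconformal conjugacy (conformal on the Fatou set) to a conformal one on $\widehat{\mathbb{C}}$. Your sketch stops at ``the combined map is conformal on $\widehat{\mathbb{C}}$'' without supplying either ingredient.
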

	
Thus, for the holomorphic model problem, only the case of infinitely-connected parabolic Fatou domains remains unresolved. Motivated by Cui-Peng's work, we introduce simple parabolic maps.

A rational map $g$ is called a \textbf{simple parabolic map} if it has a completely invariant parabolic Fatou domain $U_g$, where each non-singleton component $J$ of $\partial U_g$ is a Jordan curve. This Jordan curve bounds an eventually superattracting Fatou domain that contains at most one postcritical point. Furthermore, if the forward orbit of $J$ avoids the unique parabolic fixed point of $g$, then $J$ is a quasi-circle.

Our first result shows that simple parabolic maps serve as candidates for the holomorphic model of infinitely-connected parabolic Fatou domains.
\begin{theorem}\label{thm:1.1}
Let $f$ be a rational map with an infinitely-connected fixed parabolic Fatou domain of $U$. Then there exists a simple parabolic  map $g$ such that  $(f,U)$ and $(g,U_g)$ are conformally conjugate.
\end{theorem}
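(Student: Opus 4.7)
The plan is to adapt the quasi-conformal surgery strategy used by Cui--Peng in Theorem~A to the parabolic setting. The idea is to leave the dynamics of $f$ on $U$ unchanged, replace the dynamics on each non-singleton component of $\omC\setminus U$ by a canonical superattracting model, and finally straighten the resulting quasi-regular branched covering by the Measurable Riemann Mapping Theorem to produce the desired simple parabolic rational map $g$. Throughout, let $z_0\in\pa U$ denote the parabolic fixed point of $f|_U$.

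First I would analyze the combinatorial structure of the complement of $U$. Since $U$ is infinitely connected, $\omC\setminus U$ has at most countably many non-singleton connected components, and $f$ permutes them so that each grand orbit contains a finite periodic cycle together with its iterated preimages. I would construct the model one periodic cycle $K_0\to K_1\to\cdots\to K_{p-1}\to K_0$ at a time: uniformize a simply-connected neighbourhood of $K_0$ and, in a thin collar along $\pa K_0$, quasi-conformally interpolate between $f^p$ on the outer boundary of the collar and the pure power map $z\mapsto z^d$ (where $d$ is the local degree of $f^p$ along $\pa K_0$) on the inner boundary. This replaces the old dynamics inside $K_0$ by a B\"ottcher-type superattracting basin whose boundary is the Jordan curve $\pa K_0$ and which contains at most one postcritical point. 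Pulling this modification back along the tree of $f$-preimages of each periodic cycle yields a branched covering $g_0\colon\omC\to\omC$ which coincides with $f$ on $U$ and carries a $g_0$-invariant Beltrami coefficient $\mu$ with $\|\mu\|_\infty<1$ and $\mu\equiv 0$ on $U$. Integrating $\mu$ by a quasi-conformal $\phi\colon\omC\to\omC$, the map $g:=\phi\circ g_0\circ\phi^{-1}$ is rational and $\phi|_U$ is a conformal conjugation between $(f,U)$ and $(g,U_g)$ with $U_g:=\phi(U)$.

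The main obstacle, and the genuine difference from the attracting case, will be the parabolic fixed point $z_0$. Because $f$ is not linearizable at $z_0$, the quasi-conformal interpolation cannot be made uniformly close to the identity in arbitrarily small neighbourhoods of $z_0$; this is exactly why the definition of a simple parabolic map only asserts that $\pa K_0$ is a quasi-circle when its forward orbit avoids $z_0$. To handle this I would work in Fatou coordinates on an attracting petal at $z_0$ and arrange that, for each periodic cycle whose orbit avoids $z_0$, the support of the interpolating Beltrami coefficient sits in a compact set disjoint from $z_0$; for components whose forward orbit meets $z_0$, the pulled-back surgery passes through the non-quasi-conformal parabolic transition and produces only a topological Jordan curve boundary. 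Verifying the remaining properties of a simple parabolic map --- $U_g$ completely invariant and each bounded Fatou component eventually superattracting with at most one postcritical point --- then reduces to routine bookkeeping on the surgery combined with Sullivan's no-wandering-domain theorem to rule out spurious new Fatou components.
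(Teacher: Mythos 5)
Your plan is the natural direct adaptation of the Cui--Peng argument, but it has a genuine gap exactly at the point you flag and never resolve: the straightening step. To apply the Measurable Riemann Mapping Theorem (or Shishikura's Surgery Principle) you need the $g_0$-invariant Beltrami coefficient $\mu$ to satisfy $\|\mu\|_\infty<1$ on $\omC$. In the attracting case this holds because the puzzle pieces at consecutive depths are compactly nested, so the interpolating annuli have moduli bounded away from $0$ uniformly over the (finitely many, up to pullback) surgery sites, and pulling back by the holomorphic dynamics adds no dilatation. In the parabolic case the boundaries of puzzle pieces at different depths \emph{touch} at iterated preimages of the parabolic fixed point, the interpolating annuli degenerate there, and the dilatation of any quasi-conformal interpolation between $f^p$ and $z\mapsto z^{d}$ necessarily blows up along the orbit of those contact points. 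You acknowledge this (``the quasi-conformal interpolation cannot be made uniformly close to the identity''), but your workaround --- letting ``the pulled-back surgery pass through the non-quasi-conformal parabolic transition'' --- is not compatible with invoking the MRMT afterwards: a Beltrami coefficient that is not essentially bounded away from $1$ cannot be integrated, so the map $\phi$ does not exist and $g:=\phi\circ g_0\circ\phi^{-1}$ is never produced. Restricting the support of $\mu$ to a compact set away from $z_0$ also does not help, because the surgery must be performed on \emph{all} preimage components, and those accumulate on $\bigcup_n f^{-n}(z_0)\subset\pa U$.

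This is precisely the obstruction the paper is designed around, and it is why the paper does \emph{not} attempt a single direct surgery. Instead it first performs a \emph{plumbing surgery} on a pair of sepals at the parabolic point (Section~3.2.1), which collapses the parabolic cylinder to a compact annulus and replaces the parabolic fixed point by a repelling/attracting pair; after plumbing the truncated domains $\widetilde V_{N+n-1}\Subset V_{N+n}$ are compactly nested, so the Cui--Peng style quasi-conformal interpolation is legitimate and produces, for each parameter $t\in(0,1)$, a genuine rational map with a completely invariant \emph{attracting} domain. A normal-families argument then yields simple attracting maps $f_t$ (Proposition~\ref{model}), and a second limit $t\to 0$ recovers the parabolic structure and gives the simple parabolic map $g$ with $(f,U)\cong(g,U_g)$ conformally (Proposition~\ref{pro:parabolic}). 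In other words, the parabolic model is obtained as a limit of attracting models, not by a one-shot surgery. If you want to keep a direct surgery strategy you would have to first desingularize the parabolic point (by plumbing or an equivalent device) before asking the MRMT to do any work; as written, your proposal stops exactly where the real difficulty begins.
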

	
In the proof of Theorem A, the authors constructed the simple attracting map $g$ directly by using attracting puzzle pieces from $U$ and the quasiconformal surgery. The construction hinges on disjoint boundaries for puzzle pieces with different depths, which enables straightforward quasiconformal surgery on inter-puzzle annuli.
	
In the parabolic case, puzzle pieces from $U$ still exist, but the boundaries of puzzle pieces at different depths may intersect at iterated preimages (on $\partial U$) of the parabolic fixed points. Such intersections present an obstruction to quasiconformal surgery. Our idea for constructing a simple parabolic map from $U$ is to make use of simple attracting maps.
	
More precisely, we first perform \textbf{plumbing surgery} on $U$ and combine it with quasiconformal surgery to obtain a sequence of simple attracting maps; and then show that this sequence converges to the desired simple parabolic map.
	
The  plumbing surgery was proposed by Cui and Tan \cite{CT} to study the hyperbolic-parabolic deformation of rational maps. The method that combines plumbing surgery and quasiconformal surgery  was first applied by Peng, Yin and Zhai to prove the density of hyperbolicity for rational maps with Cantor Julia sets \cite{PYZ}.
	
At present, we cannot assert that simple parabolic maps serve as models for infinitely-connected parabolic Fatou domains, as their uniqueness remains unproven. This uniqueness issue essentially relies on rigidity results for simple parabolic maps, which are currently unknown. Note that any rational map with a Cantor Julia set and a parabolic fixed point is clearly a simple parabolic map. Rigidity theorems for such maps were established by Yin and Zhai \cite{Z1,Z2}.

The approach we use to prove Theorem \ref{thm:1.1} naturally extends to address another problem proposed by Goldberg and Milnor \cite{GM}.
	
\begin{conj}
For any polynomial $P$ having a parabolic cycle, the immediate basin of the parabolic cycle can be converted to be attracting by a small perturbation, and the perturbed polynomial on its Julia set is topologically conjugate to the original polynomial $P$ on $J(P)$.
\end{conj}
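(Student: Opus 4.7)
The plan mirrors the surgery scheme sketched for Theorem~\ref{thm:1.1}: I would replace the parabolic cycle by an attracting one via plumbing, then straighten back to a polynomial using the Measurable Riemann Mapping Theorem (MRMT). Let $\{z_0,\ldots,z_{k-1}\}$ be the parabolic cycle of $P$, and fix attracting and repelling petals $W_i,V_i$ at each $z_i$. For small $\epsilon>0$, perform a Cui--Tan plumbing surgery across the interface between $W_i$ and $V_i$ inside the Fatou set of $P$. This produces a quasi-regular map $F_\epsilon:\omC\ra\omC$ that agrees with $P$ outside the plumbing region and for which the former parabolic cycle becomes an attracting cycle with multiplier close to $1-\epsilon$.

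Next, pull back the standard complex structure by iterates of $F_\epsilon$ to obtain an $F_\epsilon$-invariant Beltrami differential $\mu_\epsilon$ supported in the grand orbit of the plumbing region; since the surgery is quasiconformal and each orbit visits the support only boundedly often, MRMT yields a quasiconformal homeomorphism $\phi_\epsilon$ whose dilatation tends to $1$ as $\epsilon\to 0$. The conjugate $P_\epsilon:=\phi_\epsilon\circ F_\epsilon\circ\phi_\epsilon^{-1}$ is then a holomorphic self-map of $\omC$ of the same degree as $P$, with a superattracting fixed point of local degree $d$ at $\phi_\epsilon(\infty)$; after normalizing $\phi_\epsilon$ to fix $\infty$ tangentially, $P_\epsilon$ becomes a polynomial arbitrarily close to $P$. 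Topological conjugacy on Julia sets will be delivered by $\phi_\epsilon$ itself: because the plumbing is carried out strictly inside the parabolic basin, $\mu_\epsilon$ vanishes on a neighborhood of $J(P)$, and $\phi_\epsilon|_{J(P)}:J(P)\ra J(P_\epsilon)$ conjugates the two dynamical systems.

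The principal obstacle, and the reason a one-shot surgery does not suffice, is that iterated preimages of the petal boundaries cluster densely on $J(P)$, so any naively chosen plumbing region meets $J(P)$ along a countable dense set of pinch points, breaking the requirement that $\mu_\epsilon$ be supported away from $J(P)$. Following the sequential strategy outlined for Theorem~\ref{thm:1.1}, I would instead perform plumbing surgeries at successively deeper levels of the parabolic puzzle, producing a sequence of attracting polynomials $P_{\epsilon,n}$, and pass to a limit as the plumbing depth increases. The hard part will be modulus control: one must verify that the annuli created by plumbing have moduli bounded away from zero uniformly in $n$, so the quasiconformal dilatations of the straightening maps stay bounded and a diagonal subsequence converges to a genuine polynomial $P_\epsilon$ whose Julia set is matched with $J(P)$ by the limiting conjugacy. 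Showing that this limit is nondegenerate and that the conjugacy respects the combinatorics across all depths is the delicate point that the techniques behind Theorem~\ref{thm:1.1} are tailored to handle.
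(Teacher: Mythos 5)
The statement you are trying to prove is presented in the paper as an \emph{open conjecture}, not as a theorem with a proof; the authors establish it only in the special case of rational maps with Cantor Julia sets (Theorem~\ref{thm:1.2}), precisely because of a gap that your proposal also fails to close. Your sequential plumbing scheme (plumbing at deeper and deeper puzzle depths, then a diagonal limit) is in fact the strategy of Section~3 of the paper, which produces a family of simple attracting maps $f_{t}$ and a parabolic limit $g$ whose parabolic Fatou domain $U_g$ is conformally conjugate to the original parabolic basin $U_f$. But a conformal conjugacy of the Fatou dynamics alone does \emph{not} identify $g$ with $P$; it does not even give a conjugacy on the sphere. To conclude that the perturbations $f_t$ converge to $P$ itself (the ``small perturbation'' part of the conjecture) one needs a rigidity theorem saying that a parabolic map is determined, up to conformal conjugacy, by its Fatou dynamics plus the combinatorics of its Julia set. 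The paper says this explicitly: ``due to the absence of the uniqueness result of simple parabolic maps, we cannot conclude that $f$ has the stable perturbation in the sense of Goldberg--Milnor.'' In the Cantor case the missing input is supplied by Theorems~B and C of Yin--Zhai (a topological conjugacy between Cantor Julia sets upgrades to a quasiconformal one, and Cantor Julia sets carry no invariant line fields), which forces $g=P$ after normalization. For general parabolic polynomials no such rigidity is available, and your modulus-control argument, even if made rigorous, at best gives convergence to \emph{some} parabolic map with the right Fatou data, not to $P$.

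There is a second, subsidiary issue. Your one-shot version asserts that because the plumbing is carried out inside the parabolic basin, $\mu_\epsilon$ vanishes on a neighborhood of $J(P)$. This is false: the sepals $S_\pm$ touch the parabolic point $x^*\in J(P)$, and the grand orbit of the surgery region accumulates on the backward orbit of $x^*$, which is dense in $J(P)$. You acknowledge this as ``the principal obstacle,'' but the sequential strategy you propose to fix it runs into the rigidity gap above; it trades one unresolved problem for another. Finally, the paper's surgery machinery (the quasiconformal extension over puzzle pieces in Section~3.2.2) is tailored to \emph{infinitely-connected} parabolic Fatou domains, while the Goldberg--Milnor conjecture concerns polynomials whose parabolic immediate basins may be simply connected; the geometrically finite cases are already settled by Haïssinsky and by Cui--Tan and Kawahira by different methods, and what remains open is exactly the non--geometrically-finite situation, where your proposal offers no replacement for the missing rigidity input.
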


This conjecture was addressed in the setting of geometrically finite polynomial maps with connected Julia sets by Haissinssky \cite{Hai}.
For a geometrically finite rational map, Cui and Tan \cite{CT}, and Kawahira \cite{Ka1, Ka2} gave an affirmative answer to this conjecture, respectively, using different approaches. 
In our work, we consider this perturbation problem for simple parabolic maps.

\begin{theorem}\label{thm:middle}
Let $f$ be a simple parabolic map. Then there exists a simple attracting map $g$ such that $(f,J(f))$ and $(g,J(g))$ are topologically conjugate.
\end{theorem}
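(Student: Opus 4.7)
The plan is to perform a quasiconformal surgery at the parabolic fixed point of $f$ that replaces its parabolic germ with an attracting germ, and then to straighten the resulting quasi-regular map into a rational one via the measurable Riemann mapping theorem. This can be viewed as the reverse of the plumbing surgery used in the proof of Theorem~\ref{thm:1.1}.

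First I set up the local picture. Let $p$ denote the parabolic fixed point of $f$; since $f$ is simple parabolic and $U_f$ is completely invariant, every attracting petal of $p$ lies in $U_f$. Using attracting Fatou coordinates I conjugate $f$ in a petal to the unit translation $w\mapsto w+1$ on a half-plane, and I fix a fundamental strip $S$ there. I then choose a local attracting model with fixed point of multiplier $\lambda$, $|\lambda|<1$ close to $1$, and a quasiconformal interpolation between $S$ and a fundamental annulus for the attracting model, so that the translation identification on $\partial S$ corresponds to the first-return identification on the annulus. Via pullback and pushforward under $f$, this yields a quasi-regular map $\wt f$ on $\omC$ which coincides with $f$ outside a small $U_f$-neighborhood of $p$ and realizes the attracting model in that neighborhood.

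Next I construct a $\wt f$-invariant Beltrami coefficient $\mu$: it pulls back the standard structure via the interpolating quasiconformal map on the surgery annulus, is propagated by invariance through the $\wt f$-orbit of this annulus inside $U_f$, and equals the standard structure on $J(f)$ and on every superattracting basin. Because the surgery is performed only once per orbit, $\mu$ has bounded dilatation. The measurable Riemann mapping theorem provides a qc homeomorphism $\psi\colon\omC\to\omC$ with $\psi^{*}\mu_0=\mu$, and then $g:=\psi\circ\wt f\circ\psi^{-1}$ is rational. Verifying that $g$ is simple attracting is routine: $\psi(U_f)$ is completely invariant and attracting; $\psi$ carries each Jordan-curve component of $\partial U_f$ to a Jordan-curve component of $\partial\psi(U_f)$; and the condition ``at most one postcritical point per superattracting Fatou component'' is preserved.

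Since the surgery is confined to $U_f$, the maps $\wt f$ and $f$ agree on $J(f)$, so $\psi$ restricts to a homeomorphism $J(f)\setminus\bigcup_{n\geq 0} f^{-n}(p)\to J(g)\setminus\bigcup_{n\geq 0} g^{-n}(\psi(p))$ that intertwines $f$ and $g$. The step I expect to be hardest is extending this partial identification to a genuine topological conjugacy $\Phi\colon J(f)\to J(g)$: the parabolic point $p\in J(f)$ is sent by $\psi$ into the Fatou set of $g$, while the surgery produces a repelling periodic orbit of $g$, the ``unfolded'' parabolic cycle, which must play the role of $\Phi(p)$ and its forward orbit. Matching the $q$-fold attracting petal structure of $f$ at $p$ with the local linearization of $g$ at this new repelling cycle, and then propagating the identification continuously to all iterated preimages of $p$, is the technical core of the argument; the hypothesis that every non-singleton component of $\partial U_f$ is a Jordan curve is what allows this identification to close up into a global homeomorphism conjugating the two Julia dynamics.
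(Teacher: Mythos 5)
Your proposal takes a genuinely different route from the paper, but it has a gap that I think is fatal as written, centered exactly on the issue you flag as the ``technical core.''

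The paper does not perform a germ-replacement surgery on $f$ directly. Instead it reuses the fundamental sequence from Section~3: a simple attracting map $f_t$ is already produced there via Cui--Tan plumbing surgery (cutting along sepals $S_\pm(t)$ and regluing via $\tau_0$) combined with Shishikura surgery. The remainder of the proof of Theorem~\ref{thm:middle} is a purely combinatorial/topological construction of the conjugacy $\phi\colon J(f)\to J(f_t)$: a modified puzzle $\mathcal P_n^*$ built from $U_0^*=U_0\cup S_0$, a bijection $\eta\colon\mathcal E_f\to\mathcal E_{f_t}$ induced by $\psi_t$, Böttcher-coordinate identifications $\chi_E$ between superattracting basins, finer partitions $\mathcal X_n(E), \mathcal Y_n(E)$ cut by internal rays, and a shrinking argument (Claim~5.2) establishing that these pieces nest down to single points so that $\phi$ is well-defined and continuous. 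No measurable Riemann mapping theorem is invoked at this stage.

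The gap in your approach: you simultaneously claim that the surgery is ``confined to $U_f$,'' so that $\widetilde f$ and $f$ agree on $J(f)$, and that $\psi$ sends the parabolic point $p\in J(f)$ ``into the Fatou set of $g$.'' These are incompatible with $\psi$ being a conjugacy $J(f)\to J(g)$: if $\psi(p)\notin J(g)$, then $\psi(J(f))\ne J(g)$ and the restriction you propose cannot extend continuously. More fundamentally, a surgery supported in $U_f$ cannot by itself ``unfold'' the parabolic point into an attracting fixed point plus a repelling one, because $p$ lies on $\partial U_f$ and the map in the repelling directions is left untouched. The paper's plumbing surgery is designed precisely to overcome this: the sepals $S_\pm$ straddle the parabolic point, the identification $\tau_0$ excises a neighborhood of $0$ and reglues the two sides, and the quotient surface carries two new marked points ($x_n$ inside the new Fatou domain, $y_n$ on its boundary), which become the attracting and repelling fixed points respectively. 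Nothing in your half-plane interpolation produces a repelling fixed point; you assert it appears (``the unfolded parabolic cycle'') without deriving it.

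You correctly identify the remaining hard problem --- building the conjugacy at $p$ and its preimages --- but then defer it. That deferral covers essentially the whole content of the paper's proof: the choice of $\chi_E$ so that it matches $\psi_t$ at the landing point of a chosen invariant ray (Claim~5.1), and the nesting/shrinking argument for the puzzle pieces (Claim~5.2), the latter of which uses a Grötzsch-inequality estimate away from the parabolic orbit and a separate local-dynamics argument at $x^*$ itself. Without these, the conjugacy is not actually established. So while the ``reverse plumbing'' picture you describe is the right heuristic, the proposal as it stands does not constitute a proof.
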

	
In fact, we can construct a sequence $\{g_n\}$ of simple attracting maps satisfying the conclusion of Theorem \ref{thm:middle}, which converges to a simple parabolic map $f_*$ such that $(f, U_f)$ is conformally conjugate to $(f_*, U_{f_*})$. However, due to the absence of the uniqueness result of simple parabolic maps, we cannot conclude that $f$ has the stable perturbation in the sense of Goldberg-Milnor.
	
Using the rigidity results of Yin and Zhai, we can prove the Goldberg-Milnor conjecture for a special class of simple parabolic maps.
	
\begin{theorem}\label{thm:1.2}
The Goldberg-Milnor conjecture holds for any rational map with a Cantor Julia set and a parabolic fixed point.
\end{theorem}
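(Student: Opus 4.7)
The plan is to combine the convergent sequence construction recorded in the remark after Theorem \ref{thm:middle} with the Yin--Zhai rigidity for rational maps carrying a Cantor Julia set and a parabolic fixed point. Our hypothesis already places $f$ among simple parabolic maps: because $J(f)$ is Cantor, the Fatou set $F(f)$ is a single, completely invariant, infinitely-connected parabolic Fatou domain $U_f$ whose boundary components are all singletons, so the defining conditions of a simple parabolic map are satisfied vacuously.

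First I would invoke the strengthened form of Theorem \ref{thm:middle} announced in the remark that follows it: it produces a sequence $\{g_n\}$ of simple attracting maps, each topologically conjugate to $f$ on the Julia set, converging in the degree $d$ rational map parameter space to a simple parabolic map $f_*$ with $(f,U_f)$ conformally conjugate to $(f_*,U_{f_*})$. Each $J(g_n)$ is a Cantor set (being homeomorphic to $J(f)$), and since the full Fatou dynamics of $f$ and $f_*$ are conformally identified and all critical points of $f$ lie in $U_f$, the same holds for $f_*$, so $F(f_*)=U_{f_*}$ is infinitely-connected and $J(f_*)$ is again a Cantor set.

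Next I would apply the Yin--Zhai rigidity results (\cite{Z1,Z2}) to the pair $(f,f_*)$: both are rational maps with Cantor Julia set and a parabolic fixed point, and their Fatou dynamics are conformally identified, so the critical orbits and the resulting combinatorial data match up. The rigidity then upgrades the given conformal equivalence to a M\"obius conjugacy $M\colon\omC\to\omC$ with $M\circ f=f_*\circ M$. Setting $\tilde g_n:=M^{-1}\circ g_n\circ M$ yields a sequence of simple attracting maps with $\tilde g_n\to f$ in parameter space, each topologically conjugate to $f$ on the Julia set via the composition of $M$ with the conjugacy between $g_n$ and $f$, which is precisely the Goldberg--Milnor conclusion for $f$.

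The step I expect to be the main obstacle is the passage from the conformal conjugacy $\phi\colon U_f\to U_{f_*}$ on the Fatou sets to the global input required by Yin--Zhai's rigidity, which is naturally phrased in terms of a topological or combinatorial equivalence on all of $\omC$. The crucial observation is that $\phi$ extends continuously across the Cantor Julia set to a homeomorphism of the whole sphere commuting with the dynamics: every point of $J(f)$ is accessed from $U_f$ through a nested sequence of puzzle pieces with shrinking diameter, and $\phi$ maps these puzzle pieces to the corresponding puzzle pieces of $f_*$, so the continuous extension exists. Once the conjugacy is global, the rigidity of Yin and Zhai supplies the M\"obius conjugation and the remainder of the argument is immediate.
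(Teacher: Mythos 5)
Your proposal is correct and follows essentially the same route as the paper: obtain a sequence of simple attracting maps $\{f_{t_j}\}$ converging to a simple parabolic map $g$ (via Propositions \ref{model} and \ref{pro:parabolic}), extend the conformal conjugacy $U_f \to U_g$ across the Cantor Julia set using shrinking puzzle pieces to get a global topological conjugacy, and then apply the Yin--Zhai rigidity theorems (\cite{Z1} to promote to quasiconformal, \cite{Z2} to promote to conformal) to identify $f$ and $g$. The only cosmetic difference is that the paper normalizes $\psi$ to fix three points so the resulting M\"obius conjugacy is the identity, forcing $f=g$ outright, whereas you keep an arbitrary M\"obius map $M$ and conjugate the approximating sequence back by $M^{-1}$; both variants immediately give the Goldberg--Milnor conclusion.
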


The paper is organized as follows. In Section 2, we study the topology of boundary components for an infintely-connected periodic Fatou domain. In Section 3, we construct a double-subscript sequence $\{f_{n,t}\ | \ n\geq1,t\in(0,1)\}$ based on $(f,U)$, which is a foundation for the proofs of Theorems 1.1--1.3. In Section 4, we prove Theorem \ref{thm:1.1}, and in Section 5, we prove Theorems \ref{thm:middle} and \ref{thm:1.2}.
	
Throughout this paper, a {\bf disk} means a Jordan domain in $\mathbb{C}$, and a {\bf closed disk} means the closure of a disk.  For simplicity, we use the term \textbf{component} to refer to a connected component.

\section{Boundary components of Fatou domains}\label{sec:2.1} 
For a non-empty connected and compact set $E\subset \mathbb{C}$, its {\bf filling} $\widehat{E}$  is defined as the union of $E$ and all bounded components of $\mathbb{C}\sm E$; and it is  called {\bf full} if $E=\widehat{E}$ is connected. \vskip 0.1cm

Let $R$ be any rational map and $W$ be a fixed infinitely-connected Fatou domain of $R$. Then $W$ is attracting or parabolic. Let  $ {\mathcal E }_R={\cal E}_R(W)$ denote the collection of all components of $\partial W$. For any $E\in\mathcal E_R$, we have
\begin{itemize}
\item [(1)]  $\widehat{E}$ is a component of $\mathbb{C}\setminus W$;\vspace{2pt}
\item [(2)] $R(E)\in\mathcal E_R$; and
\item [(3)]  $R(\widehat{E})=\widehat {R(E)}$ if $\widehat{E}\cap R^{-1}(W)=\emptyset$, and $R(\widehat{E})=\widehat{\mathbb{C}}$ otherwise.
\end{itemize}
By the above statement (2) and the fact that $R(W)=W$, we obtain a surjective map 
\[\sigma_R:\mathcal E_R\to \mathcal E_R,\ E\mapsto R(E).\]
Using the forward iteration of  $\sigma_R$, we can define the  {(pre)periodic} or {wandering} elements of $\mathcal{E}_R$, and the orbits of elements of $\mathcal E_R$.\vskip 0.1cm
	
For any $E\in\mathcal{E}_R$, we define the {\rm degree} of $\sigma_R$ on $E$ as follows. Choose a disk $D\supset \widehat{\sigma_R(E)}$ such that the annulus $A:=D\setminus \widehat{\sigma_R(E)}$ does not contain the critical values of $R$. Let $D_E$ be the component of $R^{-1}(D)$ containing $E$. Since $A$ is disjoint from the critical values, the set $A_E:=D_E\setminus \widehat{E}$ is still an annulus, with one boundary component $E$, such that $R:A_E\to A$ is a covering. Then we define
\[{\rm deg}_E\sigma_R={\rm deg}(R:A_E\to A).\]
It is easy to check that this definition is independent on the choice of $D$, and it holds that ${\rm deg}_E \sigma_R^2={\rm deg}_E\sigma_R\cdot{\rm deg}_{\sigma_R(E)}\sigma_R$. If ${\rm deg}_E\sigma_R>1$, we call $E$ a {\bf critical} element of $\mathcal E_R$.
	
\begin{lemma}\label{lem:finite}
If $E$ is an critical element of $\mathcal E_R$, then $\widehat{E}$ contains critical points of $R$. Consequently, there are finitely many critical components in $\mathcal E_R$.
\end{lemma}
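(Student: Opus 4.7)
The plan is to prove the first assertion by contraposition. Suppose $\widehat{E}$ contains no critical points of $R$; I would show $\deg_E\sigma_R=1$. The key object is the restriction $R|_{D_E}:D_E\to D$, which I claim is a proper holomorphic map. Indeed, since $D_E$ is a connected component of the open set $R^{-1}(D)$, its topological boundary satisfies $\partial D_E\subset R^{-1}(\partial D)$, so any sequence in $D_E$ whose image converges inside $D$ cannot escape to $\partial D_E$ and must accumulate in $D_E$ itself.

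By the choice of $D$, the annulus $A$ contains no critical values, and the covering $R:A_E\to A$ then guarantees that $A_E$ contains no critical points of $R$. Combining this with the standing assumption that $\widehat{E}$ has no critical points, the decomposition $D_E=A_E\cup(D_E\cap\widehat{E})$ shows that $R|_{D_E}$ is everywhere unramified. Being a proper unramified holomorphic map onto the simply connected disk $D$, the map $R|_{D_E}$ is a covering of $D$, hence a biholomorphism since $\pi_1(D)=0$. Restricting to the annulus then yields that $R:A_E\to A$ has degree one, i.e., $\deg_E\sigma_R=1$. Taking the contrapositive, $\deg_E\sigma_R>1$ forces $D_E$ to contain a critical point of $R$; since $A_E$ contains none, this critical point must lie in $D_E\cap\widehat{E}\subset\widehat{E}$, proving the first claim.

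For the finiteness consequence, I would invoke statement (1) of the excerpt: the fillings $\widehat{E}$ associated to distinct elements of $\mathcal{E}_R$ are distinct components of $\mathbb{C}\setminus W$, hence pairwise disjoint. Each critical element therefore contributes at least one critical point of $R$ inside its own filling, and distinct critical elements contribute disjoint sets of such critical points. Since a degree-$d$ rational map has at most $2d-2$ critical points in $\widehat{\mathbb{C}}$, at most $2\deg(R)-2$ elements of $\mathcal{E}_R$ can be critical.

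The chief subtlety to monitor is whether the argument is valid under both alternatives of statement (3)---that is, whether $R(\widehat{E})=\widehat{\sigma_R(E)}$ or $R(\widehat{E})=\widehat{\mathbb{C}}$. In the second case $\widehat{E}$ is not contained in $D_E$, so one must check that the properness step and the identity $D_E\setminus A_E=D_E\cap\widehat{E}$ remain valid. Both rely only on the definitional equality $A_E=D_E\setminus\widehat{E}$ and on $D_E$ being a component of $R^{-1}(D)$, so the argument is uniform across the two regimes.
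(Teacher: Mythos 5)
Your proof is correct and takes essentially the same approach as the paper: both hinge on the decomposition $D_E = A_E \cup (D_E \cap \widehat{E})$, the observation that $A_E$ is free of critical points because $R:A_E\to A$ is an unramified covering, and then either the Riemann--Hurwitz formula (the paper, arguing directly) or the equivalent covering-space fact that a proper unramified map onto a disk is a biholomorphism (you, arguing by contraposition) applied to the proper map $R:D_E\to D$. Your finiteness step, via pairwise disjoint fillings and the $2\deg(R)-2$ bound on critical points, is exactly what the paper leaves implicit.
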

\begin{proof}
Choose a disk $D\supset \widehat{\sigma_R(E)}$ such that the annulus $D\setminus \widehat{\sigma_R(E)}$ avoids the critical values of $R$. Let $D_E$ and $A_E$ be defined as in the definition of ${\rm deg}_E\sigma_R$. Since ${\rm deg}_E\sigma_R>1$, by definition, it follows that $\deg(R:D_E\to D)$ is larger than one. Then $D_E$ contains critical points of $R$ by the Riemann-Hurwitz formula. Thus we have $$\emptyset\not=D_E\cap C(R)=(D_E\setminus A_E)\cap C(R)\subset \widehat{E}\cap C(R),$$ where $C(R)$ denotes the set of critical points of $R$. Then the lemma is proved.
\end{proof} 
	
If the Fatou domain $W$ is completely invariant, then every element of $\mathcal{E}_R$ is a Julia component. In this case, the following theorem by McMullen (\cite[Theorem 3.4]{Mc}) can be applied.

\begin{theoremM}
Let $E$ be a non-singleton Julia component  of a rational map $R$ of degree $d\geq 2$ such that $R(E) = E$. Then there exist a rational map $h$ of degree at least $2$ and a quasiconformal map $\varphi: \widehat{\mathbb{C}} \to \widehat{\mathbb{C}}$ such that $\varphi(E) = J(h)$ and $\varphi \circ R = h \circ \varphi$ on $E$.
\end{theoremM}	

\begin{lemma}\label{lem:component}
Suppose that $R^{-1}(W)=W$, and let $E$ be an element of $\mathcal E_R$.
\begin{itemize}
\item [(1)] The set $E$ is a point if and only if its orbit contains no periodic critical components.
\item [(2)] Suppose that $R(E)=E$, ${\rm int}\widehat{E}$ contains exactly one critical point, which is fixed by $R$, and $E\cap C(R)=\emptyset$. Then $E$ is a Jordan curve. Furthermore, if $E$ contains no parabolic points, then $E$ is a quasi-circle.
\end{itemize}
\end{lemma}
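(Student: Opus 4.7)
For Part (1), the forward direction is immediate from Lemma~\ref{lem:finite}: if $E=\{z\}$ and some iterate $\sigma_R^n(E)$ were a periodic critical component, then the singleton $\{R^n(z)\}$ would, by that lemma, contain a critical point, forcing $R^n(z)$ to be a periodic critical point, hence a superattracting periodic point in $F(R)$---contradicting $\{R^n(z)\}\subset J(R)$. For the converse, assume $E$ is non-singleton. First, every $\sigma_R^n(E)$ is non-singleton too, since a singleton image $\{w\}$ would pull back to $E\subset R^{-n}(w)$, which is a finite, connected, hence singleton set. Second, I would show the orbit is eventually periodic: by Lemma~\ref{lem:finite} only finitely many elements of $\mathcal{E}_R$ are critical, so past a certain index $\sigma_R$ restricts to a conformal isomorphism on the annular neighborhoods $A_{\sigma_R^n(E)}$, preserving their moduli; the accumulation of $\{\sigma_R^n(E)\}$ in the compact set $\partial W$, combined with this non-shrinking, forces recurrence and hence periodicity. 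Finally, McMullen's Theorem applied to the resulting periodic non-singleton $E_0$ of period $p$ produces a rational map $h$ of degree $\ge 2$ conjugate to $R^p$ on $E_0$, so $\deg_{E_0}\sigma_R^p=\deg h\ge 2$, forcing at least one element of the $E_0$-cycle to be critical.

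For Part (2), let $k\ge 2$ be the local degree of $R$ at the unique fixed critical point $c\in\widehat{E}^\circ$, and let $V$ be its Fatou component, a superattracting basin contained in $\widehat{E}^\circ$. Since $E$ is connected, $\widehat{E}$ is a full continuum and hence simply connected in $\widehat{\mathbb{C}}$. The restriction $R:\widehat{E}\to\widehat{E}$ is a branched covering whose critical mass concentrates at $c$, so Riemann--Hurwitz gives $\deg(R|_{\widehat{E}})=k=\deg(R|_V)$, and hence $R^{-1}(V)\cap\widehat{E}=V$. Ruling out other periodic Fatou cycles inside $\widehat{E}^\circ$ (no critical points available for attracting/parabolic cycles there, and the postcritical set inside $\widehat{E}$ reduces to $\{c\}$, excluding Siegel disks and Herman rings) and combining with Sullivan's theorem yields $\widehat{E}^\circ=V$; in particular, $V$ is simply connected with $\partial V=E$. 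I would then apply McMullen's Theorem to obtain a rational map $h$ of degree $k$ and a quasiconformal homeomorphism $\varphi:\widehat{\mathbb{C}}\to\widehat{\mathbb{C}}$ with $\varphi(E)=J(h)$ and $\varphi\circ R=h\circ\varphi$ on $E$. Since $\widehat{\mathbb{C}}\setminus E=V\sqcup(\widehat{\mathbb{C}}\setminus\widehat{E})$ has exactly two components, $h$ has exactly two Fatou components $F_1=\varphi(V)$ and $F_2=\varphi(\widehat{\mathbb{C}}\setminus\widehat{E})$, both simply connected.

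By the local refinement of McMullen's conjugacy to a neighborhood of $E$, the superattracting dynamics of $R$ at $c$ transfers to $h$ at $\varphi(c)$, so $h|_{F_1}$ has full degree $k$; a total-degree argument then forces both $F_1$ and $F_2$ to be completely invariant under $h$. Standard results on degree-$k$ rational maps with two simply connected completely invariant Fatou components give that $J(h)$ is a Jordan curve; hence $E=\varphi^{-1}(J(h))$ is also a Jordan curve. For the final assertion, if $E$ contains no parabolic point of $R$ then $h$ has no parabolic cycle, and Riemann--Hurwitz forces all $2k-2$ critical multiplicities of $h$ into $F_1\cup F_2$, making $h$ hyperbolic; the Jordan-curve Julia set of a hyperbolic rational map is always a quasi-circle, so $E$ is a quasi-circle too. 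The principal obstacle in both parts is ruling out wandering/hidden structure: proving the preperiodicity of non-singleton orbits in $\mathcal{E}_R$ for Part (1), and the equality $\widehat{E}^\circ=V$ for Part (2); both rest on combining Sullivan's no-wandering-domain theorem with Riemann--Hurwitz and careful modulus bookkeeping on the annular neighborhoods $A_E$.
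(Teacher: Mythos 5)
Your Part (2) argument is essentially the paper's, and in one place slightly more careful: you actually justify $\operatorname{int}\widehat{E}=V$ via Riemann--Hurwitz and the exclusion of stray preimages, where the paper simply asserts it. However, your Jordan-curve step rests on a claim that is not actually a citable ``standard result'': a degree-$k$ rational map with two simply connected completely invariant Fatou components does not automatically have a Jordan curve Julia set unless one first establishes local connectivity. The paper's route is the right one and should be made explicit: since $E$ (hence $\varphi(E)=J(h)$) contains no critical points, all critical orbits of $h$ remain in the two invariant Fatou components, so $h$ is geometrically finite; by Tan--Yin $J(h)$ is locally connected, and a locally connected continuum that is the common boundary of exactly two complementary simply connected domains is a Jordan curve. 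Your hyperbolic/quasi-circle conclusion is fine (and slightly different in phrasing from the paper, which uses the expanding-annulus characterization from \cite{CP}).

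The real gap is in Part (1), in the step you yourself flag as the ``principal obstacle.'' You argue that past a certain index $\sigma_R$ is a conformal isomorphism on the annuli $A_{\sigma_R^n(E)}$, that this preserves their moduli, and that the resulting ``non-shrinking'' together with accumulation in $\partial W$ ``forces recurrence and hence periodicity.'' This does not work: the annuli $A_{\sigma_R^n(E)}$ depend on a choice of disk $D$ at each stage, so ``preserving moduli'' is not well-defined without fixing a coherent family; and, more fundamentally, recurrence of a sequence in a compact set does not imply eventual periodicity (consider any irrational rotation orbit). What is actually needed is the Branner--Hubbard--Qiu--Yin--Zhai puzzle machinery: one shows that for a wandering non-critical component the sum of the moduli of nested puzzle annuli around $\widehat{E}$ diverges, forcing $\widehat{E}$ to be a point. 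The paper invokes exactly this by citing Zhai \cite{Z3}, whose proof shows that every wandering element of $\mathcal{E}_R$ is a singleton; the preperiodic case is then disposed of by McMullen's theorem exactly as you do. Without that ingredient, your proof of the converse direction of Part (1) is incomplete.
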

\begin{proof}
(1) The necessity is obvious. So it is enough to prove the sufficiency.
		
It is proved in \cite{Z3} that $J(R)$ is a Cantor set if and only if each critical element of $\mathcal E_R$ is not periodic. The proof of this result implies that each wandering element of $\mathcal{E}_R$ is a singleton. Then we only need to consider the preperiodic case.
		
In this case, it is enough to show that if $\sigma_R(E)=E$ and ${\rm deg}_E\sigma_R=1$, then $E$ is a singleton. Suppose on the contrary that $E$ is not a singleton. Since $E$ is a Julia component, we can apply McMullen's Theorem to $(R, E)$, and then obtain that ${\rm deg} (R|_E)>1$. Note that for a Julia component $E$, it holds that ${\rm deg} (R|_E)={\rm deg}_E\sigma_R$. This contradicts   ${\rm deg}_E\sigma_R=1$. \vspace{3pt}
		
(2) Since $W$ is completely invariant, we have $R(\widehat{E})=\widehat{E}$. Then ${\rm int}\widehat{E}\subset F(R)$. From the properties on $\widehat{E}$, we conclude that ${\rm int}\widehat{E}$ is a simply-connected superattracting Fatou domain of $R$, denoted by $\Omega_1$. Note that $\Omega_2=\omC\sm\overline{\Omega}_1$ is also a simply-connected domain, and  $E = \partial\Omega_1 = \partial\Omega_2$. \vskip 0.1cm
		
Set $\deg(R|_{E})=d_0\geq 2$.  By applying McMullen's Theorem to $(R,E)$, we obtain a rational map $h$ of degree $d_0$ and a quasiconformal map $\varphi\colon\widehat{\mathbb{C}}\to\widehat{\mathbb{C}}$ such that $\varphi(E) = J(h)$ and $\varphi \circ R = h \circ \varphi$ on $E$. Then $\varphi(\Omega_1)$ and $\varphi(\Omega_2)$ are the only two Fatou components of $h$, and they are both completely invariant. 
		
As $E$ contains no critical points of $R$, $J(h) = \varphi(E)$ contains no critical points of $h$. Thus $h$ is geometrically finite. So $J(h)$ is locally connected \cite{TY}. Since $J(h) $ is the common boundary of two Fatou domains, it is a Jordan curve, implying that $E$ is also a Jordan curve.
		
Furthermore, if $E$ contains no parabolic points of $R$, we have that $J(h) =\varphi(E)$ contains no parabolic points of $h$ as $\varphi$ is quasiconformal. Thus $h$ is a hyperbolic map. 
Since $h$ is expanding on $J(h)$, there exists a holomorphic covering map $H\colon A_2 \to A_1$, where $A_1$ and $A_2$ are annuli satisfying that $A_2 \subset A_1$ and $A_2$ seperates the boundary components of $A_1$. Then $\varphi(E)=\bigcap_{n \geq 0} H^{-n}(A_1)$ is a quasi-circle (see \cite[Lemma 3.4]{CP}). Consequently, $E$ is a quasi-circle.
\end{proof}
	
\section{Fundamental sequences }
Let $f$ be a rational map with a fixed  infinitely-connected  parabolic Fatou domain $U$, and $d:={\rm deg}(f|_U)$.
	
In this section,  making using of plumbing surgery and quasiconformal surgery to $(f,U)$, we can construct a double-subscript sequence $\{f_{n,t}\ | \ n\geq1,t\in(0,1)\}$ of degree-$d$ rational maps  with completely invariant attracting Fatou domains.
	
These sequences are foundations in our proofs of Theorems 1.1--1.3. Roughly speaking, fixing any $t\in(0,1)$ and letting $n\to\infty$, we obtain a simple attracting map $f_t$ required in Theorems 1.2 and 1.3;   letting $t\to 0$, $f_t$ converges to a simple parabolic map required in Theorem 1.1.
	
\subsection{Parabolic puzzles}\label{sec:puzzle}
The construction of parabolic puzzles is similar to that of attracting ones given in \cite[Section 2]{CP}, with the substitution of an attracting petal of the parabolic fixed point for a linearization domain of the attracting fixed point.
	
Without loss of generality, we may assume that $\infty\in U, f(0)=0$ and $U$ is the immediate parabolic basin of $0$. By the Leau-Fatou Flower Theorem \cite{M}, there exists a disk $U_0\subset U$ with smooth boundary except at $0$, called an \textbf{attracting petal} of $0$, such that
\begin{itemize}[leftmargin=1cm]
\item[(1)] $0\in\partial U_0,  f(\overline{U_0})\subset U_0\cup\left\{0\right\}$.
		
\item[(2)] $f:U_0\rightarrow f(U_0)$ is conformal, and $(\partial U_0\sm \left\{0\right\})\cap P(f)=\emptyset$.
		
\item[(3)] $\{f^n|_{U_0}\}$  converges locally and uniformly to $0$, as $n\rightarrow\infty$.
		
\item[(4)] for any $z\in U$, there exists an integer $k\geq 1$ such that $f^{k}(z)\in U_0$.
\end{itemize}

Denote by $\left\langle f\right\rangle $ the grand orbit of $f$. Then $U_0 / \left\langle f\right\rangle $ is conformally isomorphic to the infinite cylinder $\mathbb{C}/\mathbb{Z}$, which is called an \textbf{attracting cylinder}.  Let $\pi$ denote the natural projection from an attracting petal to the attracting cylinder. An attracting petal  is called \textbf{regular} if the arc $\pi(\partial U_0\sm \{0\})$ lands on punctures at both ends. Every attracting petal contains a regular attracting petal (refer to \cite[Proposition 2.15]{CT}). So we always assume an attracting petal is regular in this paper.
	
For each $n\geq1$, let $U_n$  denote the component of $f^{-n}(U_0)$ containing $U_0$. It follows that $U_{n}\subset U_{n+1}$, $U=\bigcup_{n\geq 0}U_n$, $\partial U_{n}\cap\partial U_{n+1}\subset f^{-n}(0)$, and $f:U_{n+1}\rightarrow U_{n}$ is a holomorphic proper map. There exists an integer $N\geq 1$ such that for all $n\geq N$,  $\deg(f:U_n\rightarrow U_{n-1})=d$. Then $U_{N}$ contains all critical points of $f$ in $U$.
	
For each $n\geq0$, set $Z_n=f^{-n}(0)\cap\partial U_n$. It follows that  $Z_n\subset Z_{n+1}$ for $n\geq 0$ and $Z_{n}=f^{-1}(Z_{n-1})\cap \partial U_n$ for $n\geq N$.
	
For each $n\geq 0$, let $\mathcal{P}_n$ denote the collection of all components of $\mathbb{C}\setminus U_{N+n}$, which are called  \textbf{(parabolic) puzzle pieces} of depth $n$ (see Figure \ref{fig:1}). We remark that the puzzle pieces in existing literature are open sets, while here we take them closed for technical reasons.	  
\begin{figure}[htbp]
	\centering
	\includegraphics[width=14cm]{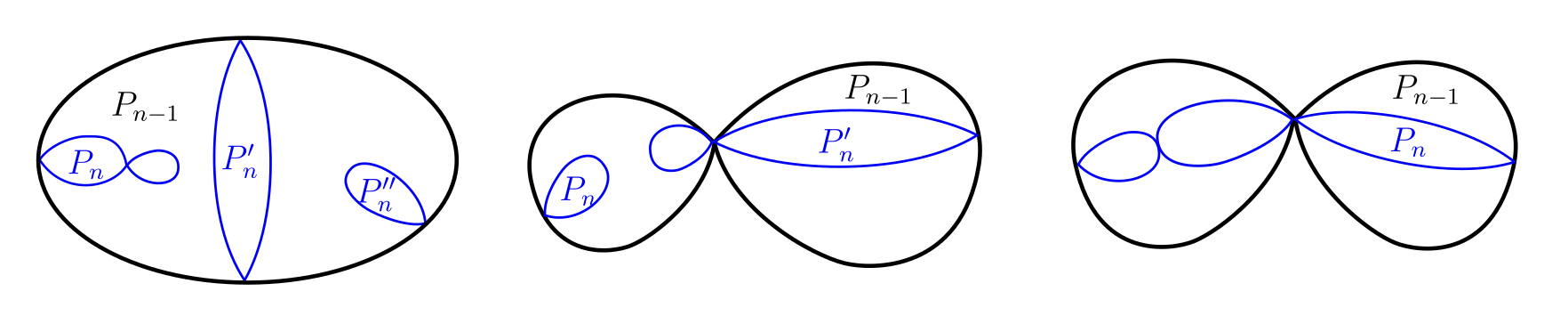}
	\caption{Puzzle pieces of $f$. $P_{n-1}$ denotes the puzzle piece of depth $n-1$, and $P_{n}$, $P_{n}^{\prime}$, $P_{n}^{\prime\prime}$ denote the puzzle pieces of depth $n$ contained in $P_{n-1}$.}
	\label{fig:1}
\end{figure}

\begin{lemma}\label{puzzle} The puzzle pieces satisfy the following properties.\vspace{3pt}
		
$\mathrm{(P1)}$ Fix $P_n\in\mathcal{P}_n$ for $n\geq 0$. Then the following statements hold.
\begin{itemize}[leftmargin=1.5cm]
\item[(a)] $P_n$ is  full  and ${\rm int}P_n$ has finitely many components, each of which is a disk.
\item[(b)] If the boundaries of two components of ${\rm int}P_n$ intersect, then the intersection is contained in  $\big(\bigcup_{i\geq 0}f^{-i}(C(f))\big)\bigcap f^{-(N+n)}(0)$.
\item[(c)] $\partial P_n\cap (\widehat{\mathbb{C}}\sm U)\subset Z_{N+n}$.
\item[(d)] $P_n$ is disjoint from any other puzzle piece in $\mathcal{P}_n$.
\item[(e)] For any $P_{n+1}\in\mathcal{P}_{n+1}$, if $P_{n+1}\cap P_{n}\neq \emptyset$, then $P_{n+1}\subset P_{n}$ and $\partial P_{n+1}\cap \partial P_{n}\subset Z_{N+n}$; on the other hand,  any $z\in Z_{N+n}\cap\partial P_n$ belongs to  a unique puzzle piece in $\mathcal{P}_{n+1}$.
\end{itemize} \vskip 0.1cm
		
$\mathrm{(P2)}$  For any $n\geq 0$, $$\bigcup_{P_n\in\mathcal{P}_n}P_n\supset \omC\sm U \quad \text{and}\quad \bigcap_{n\geq 0}\bigcup_{P_n\in\mathcal{P}_n}P_n=\widehat{\mathbb{C}}\sm U.$$
		
$\mathrm{(P3)}$ For each $E\in{\mathcal E }_f$ and $n\geq 0$, there is a unique puzzle piece $P_n(E)\in\mathcal{P}_n$ containing $E$, and it holds that  $P_{n+1}(E)\subset P_n(E)$ and $\bigcap_{n\geq 0}P_n(E)=\widehat{E}$.
\end{lemma}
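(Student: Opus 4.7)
The plan hinges on describing $\partial U_{N+n}$ as a finite planar graph. Its vertex set is $Z_{N+n}=f^{-(N+n)}(0)\cap\partial U_{N+n}$ (finite because $f^{-(N+n)}(0)$ is), and its open edges are arcs of $f^{-(N+n)}(\partial U_0\setminus\{0\})$, each contained in $U$ by property~(2) of the regular petal. One verifies this by induction along the tower $U_0\subset U_1\subset\cdots$: $\partial U_0$ is a Jordan curve with the single distinguished point $0$, and each lift by $f^{-1}$ adds finitely many new arcs meeting at finitely many new vertices, namely the fresh preimages of $0$ landing on the next level. Once this graph picture is established, the components of $\mathbb{C}\setminus\overline{U_{N+n}}$ are finitely many Jordan domains cut out by cycles of the graph, and these are precisely the building blocks of $\mathrm{int}\,P_n$. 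Property (P1)(a) now follows: $P_n$ is a component of $\widehat{\mathbb{C}}\setminus U_{N+n}$, and since $U_{N+n}$ is a connected open set in the sphere, a standard argument (any component $W$ of $\widehat{\mathbb{C}}\setminus P_n$ disjoint from $U_{N+n}$ satisfies $\overline{W}\subset\widehat{\mathbb{C}}\setminus U_{N+n}$ with $\partial W\subset P_n$, forcing $\overline{W}\subset P_n$ and thus $W=\emptyset$) shows $\widehat{\mathbb{C}}\setminus P_n$ is connected, so $P_n$ is full; its interior is a finite union of the Jordan disks above.

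For (P1)(b), any non-vertex point $z\in\partial U_{N+n}\setminus Z_{N+n}$ lies in the interior of a single edge, so locally the boundary is a smooth arc with $U_{N+n}$ on one side and a single complementary component on the other. Hence a common boundary point of two distinct components of $\mathrm{int}\,P_n$ must be a vertex $z\in Z_{N+n}$ at which at least two ``petals'' of $U_{N+n}$ meet; this is possible only when $f^{N+n}$ has local degree $\geq 2$ at $z$, that is, $z\in\bigcup_{i\geq 0}f^{-i}(C(f))$. The remaining items are routine: (c) because every non-vertex point of $\partial U_{N+n}$ lies on an arc inside $U$; (d) by definition of components of a closed set; (e) because $U_{N+n}\subset U_{N+n+1}$ forces $P_{n+1}\subset P_n$ once they meet, while the hypothesis $\partial U_m\cap\partial U_{m+1}\subset f^{-m}(0)$ yields $\partial P_n\cap\partial P_{n+1}\subset Z_{N+n}$, and the second clause of (e) follows from (d) together with $Z_{N+n}\subset\mathbb{C}\setminus U_{N+n+1}$.

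For (P2), unpacking the definitions gives $\bigcup_{P\in\mathcal{P}_n}P=\mathbb{C}\setminus U_{N+n}$ and $\bigcap_{n\geq 0}(\mathbb{C}\setminus U_{N+n})=\mathbb{C}\setminus\bigcup_n U_{N+n}=\widehat{\mathbb{C}}\setminus U$, using $\infty\in U$. For (P3), the unique $P_n(E)$ containing the connected set $E$ exists by (P1)(d); fullness yields $\widehat{E}\subset P_n(E)$, and the nesting $P_{n+1}(E)\subset P_n(E)$ is (P1)(e). For $\bigcap_n P_n(E)=\widehat{E}$, I use that a decreasing intersection of compact connected sets in the Hausdorff space $\widehat{\mathbb{C}}$ is connected, observe via (P2) that $\bigcap_n P_n(E)\subset \widehat{\mathbb{C}}\setminus U$ and contains $E$, and conclude that this intersection lies in the component of $\widehat{\mathbb{C}}\setminus U$ through $E$, namely $\widehat{E}$; combined with the reverse inclusion this gives equality. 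The one substantive obstacle is the initial graph-structure claim, in particular tracking under the lifts $f^{-1}$ which preimages of $0$ are critical points of some iterate of $f$, since precisely those vertices produce the multi-petal branching that powers (P1)(b); once those are isolated, the remainder of the lemma is bookkeeping.
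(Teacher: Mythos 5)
Your proposal is substantially correct and follows the same approach the paper itself takes (the paper disposes of (P1)(a)--(d) by saying they ``follow directly from the construction,'' which is precisely the planar-graph picture of $\partial U_{N+n}$ you make explicit). Two small points are worth flagging. First, for the second clause of (P1)(e) you assert $Z_{N+n}\subset\mathbb{C}\setminus U_{N+n+1}$ without justification; the paper supplies the one-line proof: if $z\in Z_{N+n}\cap U_{N+n+1}$ then $f^{N+n+1}(z)\in U_0$, yet $f^{N+n+1}(z)=f(0)=0\notin U_0$ because $0\in\partial U_0$ and $U_0$ is open, a contradiction. Second, the graph-structure claim you identify as the ``one substantive obstacle'' turns on the normalization $(\partial U_0\setminus\{0\})\cap P(f)=\emptyset$ from property~(2) of the regular petal: this is exactly what guarantees that the preimage arcs $f^{-(N+n)}(\partial U_0\setminus\{0\})$ are nonsingular away from $Z_{N+n}$, since any branching point would be an iterated critical point mapping into $\partial U_0\setminus\{0\}$ and hence a postcritical point. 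Once you cite that property, the branching in (P1)(b) is concentrated exactly at $\big(\bigcup_{i\geq 0}f^{-i}(C(f))\big)\cap f^{-(N+n)}(0)$ as required, and the rest of your argument goes through; your treatment of (P2), (P3), and the fullness argument in (P1)(a) are all sound and match the paper's intent.
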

	
\begin{proof}
$\mathrm{(P1)}$ (a)-(d) follow directly from the construction of puzzle pieces.
		
Note that \[\bigcup_{P_{n+1}\in\mathcal{P}_{n+1}}P_{n+1}= \omC\sm U_{N+n+1}\subset \omC\sm U_{N+n}=\bigcup_{P_n\in\mathcal{P}_n}P_n.\]
So if $P_{n+1}\cap P_{n}\neq \emptyset$, then $P_{n+1}\subset P_{n}$.  Since \[\partial P_{n+1}\subset\partial U_{N+n+1},\ \ \partial P_{n}\subset\partial U_{N+n}\quad \text{and}\quad \partial U_{N+n+1}\cap\partial U_{N+n}\subset f^{-(N+n)}(0), \] we have $\partial P_{n+1}\cap \partial P_{n}\subset Z_{N+n}$. For any $z\in Z_{N+n}\cap\partial P_n$, we have $f^{N+n}(z)=0$. Assume by contradiction that $z\notin\mathcal{P}_{n+1}$. Then $z\in  U_{N+n+1}$, hence $f^{N+n+1}(z)\in U_0$. This contradicts  $f^{N+n+1}(z)=0$.  This proves (e). \vskip 0.1cm
		
$\mathrm{(P2)}$ Since $U_{N+n}\subset U$,  $\omC\sm U\subset\omC\sm U_{N+n}$, which is equal to $\bigcup_{P_n\in\mathcal{P}_n}P_n$. \vskip 0.1cm
From $U=\cup_{n\geq 0}U_n=\cup_{n\geq 0}U_{N+n}$, we obtain	\[\omC\sm U=\omC\sm \bigcup_{n\geq 0}U_{N+n}= \bigcap_{n\geq 0}\big(\omC\sm U_{N+n}\big)=\bigcap_{n\geq 0}\bigcup_{P_n\in\mathcal{P}_n}P_n.\]
		
$\mathrm{(P3)}$ By (e) of (P1), we know that $P_{n+1}(E)\subset P_{n}(E)$ for any $n\geq0$. For any $z\in\bigcap_{n\geq 0}P_n(E)$, we have $z\in\widehat{\mathbb{C}}\sm U$. Otherwise, $z\in U_{N+m}$ for some $m\geq 0$, which implies that  $z\notin P_m(E)$, a contradiction. Thus $\bigcap_{n\geq 0}P_n(E)\subset \widehat{\mathbb{C}}\sm U$. Note that $\bigcap_{n\geq 0}P_n(E)$ is connected and $\partial P_n(E)\sm f^{-(N+n)}(0)\subset U$.  It follows that  $\bigcap_{n\geq 0}P_n(E)=\widehat{E}$.
\end{proof}
	
\subsection{Construction of fundamental sequences}
The construction of sequences $\{f_{n,t}\ | \ n\geq 0,t\in(0,1)\}$ from $(f,U)$ involves two steps.
	
\subsubsection{The plumbing surgery.}\label{sec:plumbing}
\quad\ \   Following  \cite[Section 2]{CT}, there are two disjoint disks $S_\pm$  with smooth boundaries except at $0$, called  {\bf sepals} of the parabolic point $0$, such that
\begin{itemize}
\item $\overline{S_+}\cup \overline{S_-}\subset U\cup\{0\}$ and $\overline{S_+}\cap\overline{S_-}=\{0\}$,\vskip 0.1cm
\item both $S_\pm$ intersect the attracting petal $U_0$ and are disjoint from $P(f)$,\vskip 0.1cm
\item $f:\overline{S_+}\to \overline{S_+}$ and $f:\overline{S_-}\to\overline{S_-}$ are both homeomorphisms.
\end{itemize}
	
Fix $\delta\in\{+,-\}$. The quotient space $S_\delta/\left\langle f\right\rangle $ is an once-punctured disk. Then there is a natural holomorphic projection $\pi_\delta: S_\delta\rightarrow \mathbb{D}^{*}$, where $\mathbb{D}^{*}=\{z\in\omC \, |\, 0<|z|<1\}$, such that $\pi_\delta(z_1)=\pi_\delta(z_2)$ if and only if $f^k(z_1)=z_2$ for some $k\geq0$. Clearly, this map is a universal  covering. For any $0<t<1$, set  $S_\delta(t):=\pi_\delta^{-1} (\mathbb{D}^{*}(t))$ where $\mathbb{D}^{*}(t)=\{z\in\omC \, |\, 0<|z|<t\}$, and  $L_\delta(t)=\partial S_\delta (t)\sm\{0\}$. By definition of $\pi_\delta$, we have 
\begin{equation}\label{eq:11}
f(L_\delta(t))=L_\delta(t)\ \text{ and }\ f(S_\delta(t))=S_\delta(t).
\end{equation}
	
Fix any $t\in(0,1)$, we denote $S_0(t):=S_+(t)\cup S_-(t)$ and $S_0:=S_+\cup S_-$. There is a conformal map $\tau_0:S_0\sm \overline{S_0(t^2)}\rightarrow S_0\sm \overline{S_0(t^2)}$ such that
\begin{itemize}
\item[(1)] for any $s\in(t^2,1)$, $\tau_0(L_+(s))=L_-(t^2/s)$, and \vskip 0.1cm
\item[(2)] $\tau_0^2=id$ and $f\circ\tau_0=\tau_0\circ f$.
\end{itemize}
	
Define an equivalence relation in $\omC\sm\overline{S_0(t^2)}$ by $z_1\sim z_2$ if $z_1= z_2$ or $\tau_0(z_1)=z_2$. The quotient space $\big(\omC\sm\overline{S_0(t^2)}\big)/\sim$ is  holomorphically isomorphic to a two-punctured sphere. Let $\pi_0:\omC\sm\overline{S_0(t^2)}\rightarrow \omC\setminus \{x_0,y_0\}$ be the projection such that $\pi_{0}(z_1)=\pi_{0}(z_2)$ if and only if $z_1\sim z_2$. It then holds that
\begin{itemize}
\item $\pi_0(U_0\setminus \overline{S_0(t^2)})$ is an one-punctured disk with smooth boundary, denoted by $V_0^*$,
\item $\pi_0$ is univalent on both $\omC\setminus \overline{S_0(t)}$ and $S_\delta\setminus \overline{S_\delta(t^2)}$, with $\delta\in\{+,-\}$.
\end{itemize}
	
Set $S_n(s)=f^{-n}\big(S_0(s)\big)$ for $0<s\leq 1$ and $n\geq 1$, where $S_0(1)=S_0$. The map $\tau_0$ can be lifted to $\tau_n:S_n(1)\sm\overline{S_n(t^2)}\rightarrow S_n(1)\sm\overline{S_n(t^2)}$ through $f^n$ for each $n\geq 1$ as follows.
	
Fix $n\geq 1$. For any $z\in Z_n$, it belongs to a boundary component $B$ of $U_n$, and denote by $m_z$ the number of components of $\wh{B}\setminus\{z\}$. Take a small disk-neighborhood $D$ of $z$ such that $D\setminus \wh{B}$ has $m_z$ components. For any such component  $W$, there are exactly two components of $S_n(s)$ intersecting $W$ and having the common boundary point $z$. Denote their union by $S_{W}(s)$. Then $f^n: S_{W}(s)\rightarrow S_0(s)$ is conformal. There is a conformal map $$\tau_{W}:S_{W}(1)\sm \overline{S_{W}(t^2)}\rightarrow S_{W}(1)\sm \overline{S_{W}(t^2)}$$ such that $f^n\circ\tau_{W}=\tau_0\circ f^n$. Since $S_n(1)\sm\overline{S_n(t^2)}$ is the union of all such $S_W(1)\sm \overline{S_W(t^2)}$, we have a map  \[\tau_n:S_n(1)\sm \overline{S_n(t^2)}\rightarrow S_n(1)\sm \overline{S_n(t^2)}\] defined as $\tau_n=\tau_W$ in $S_W(1)\sm \overline{S_W(t^2)}$, see Figure \ref{fig:2}. \begin{figure}[htbp]
	\centering
	\includegraphics[width=12cm]{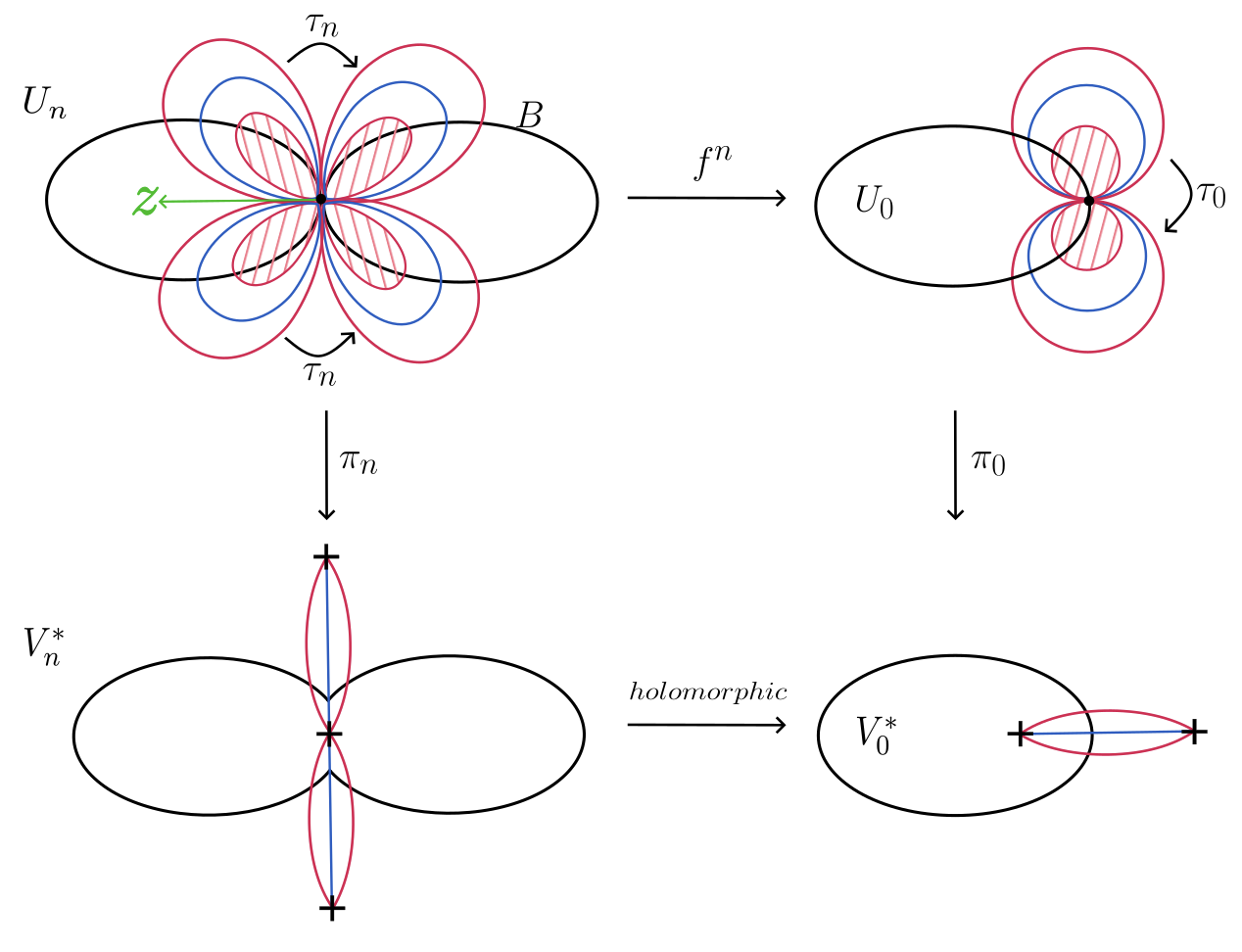}
	\caption{Surgery at the point $z\in Z_n $. }
	\label{fig:2}
\end{figure}
	
Define an equivalence relation in $\widehat{\mathbb{C}}\sm\overline{S_n(t^2)}$ by $z_1\sim z_2$ if $z_1= z_2$ or $\tau_n(z_1)=z_2$.  Then the quotient space $\big(\omC\sm\overline{S_n(t^2)}\big)/\sim$ is holomorphically isomorphic to a  punctured sphere with finitely many punctures. Thus there exist a finite set $X_{n}\subset \widehat{\mathbb{C}}$ and a holomorphic surjective map  \[\pi_{n}:\widehat{\mathbb{C}}\sm\overline{S_n(t^2)}\rightarrow \widehat{\mathbb{C}}\sm X_{n}\]  such that $\pi_{n}(z_1)=\pi_{n}(z_2)$ if and only if $z_1\sim z_2$. There are two special punctured points $x_n,y_n\in X_n$, which correspond to the parabolic fixed point $0$. Moreover,
\[\text{$\pi_n$ is univalent on both $\omC\sm \overline{S_n(t)}$ and each component of $S_n(1)\sm\overline{S_n(t^2)}$.} \eqno(\ast) \label{deg1} \]
	
Set $$V_{n}^*=\pi_{n}\big(U_n\sm\overline{S_n(t^2)}\big)\quad \text{and} \quad \widetilde{V}_{n-1}^*=\pi_{n}\big(U_{n-1}\sm\overline{S_{n-1}(t^2)}\big).$$
Then they both have punctures in $X_n$. Denote by $V_n$ ({\it resp}. $\widetilde{V}_{n-1}$) the union of $V_n^*$ ({\it resp}. $\widetilde{V}_{n-1}^*$) and its punctures. It follows from Lemma \ref{puzzle} that
\begin{itemize}
\item any component of $\partial V_n$ and $\partial \widetilde{V}_{n-1}$ is a smooth Jordan curve, and
\item $\widetilde{V}_{n-1}\Subset V_n$, and one of the two special punctured points $x_n,y_n$, say $x_n$, belongs to $ V_n$.
\end{itemize}
	
For any component $P_n$  of $\mathbb{C}\setminus U_n$, and $P_{n-1}$ of $\mathbb{C}\setminus U_{n-1}$, $$\pi_n\big(P_n\setminus\overline{S_n(t^2)}\big)\quad \text{and} \quad \pi_{n}\big(P_{n-1}\sm\overline{S_{n-1}(t^2)}\big)$$
are closed disks with punctures in $X_n$, and their closures $B_n$ and $\widetilde{B}_{n-1}$ are complementary components of $V_n$ and $\widetilde{V}_{n-1}$, respectively.
	
There exists a holomorphic proper map $G_{n}: V_{n}\to \widetilde{V}_{n-1}$ of degree $d$, such that
	\[\text{$G_{n}\circ \pi_{n}=\pi_{n}\circ f$ in $U_n\sm\overline{S_n(t^2)}$\ \ and \ $G_n(x_n)=x_n$ (see Figure \ref{fig:3}).}\]
 \begin{figure}[htbp]
 	\centering
 	\includegraphics[width=13cm]{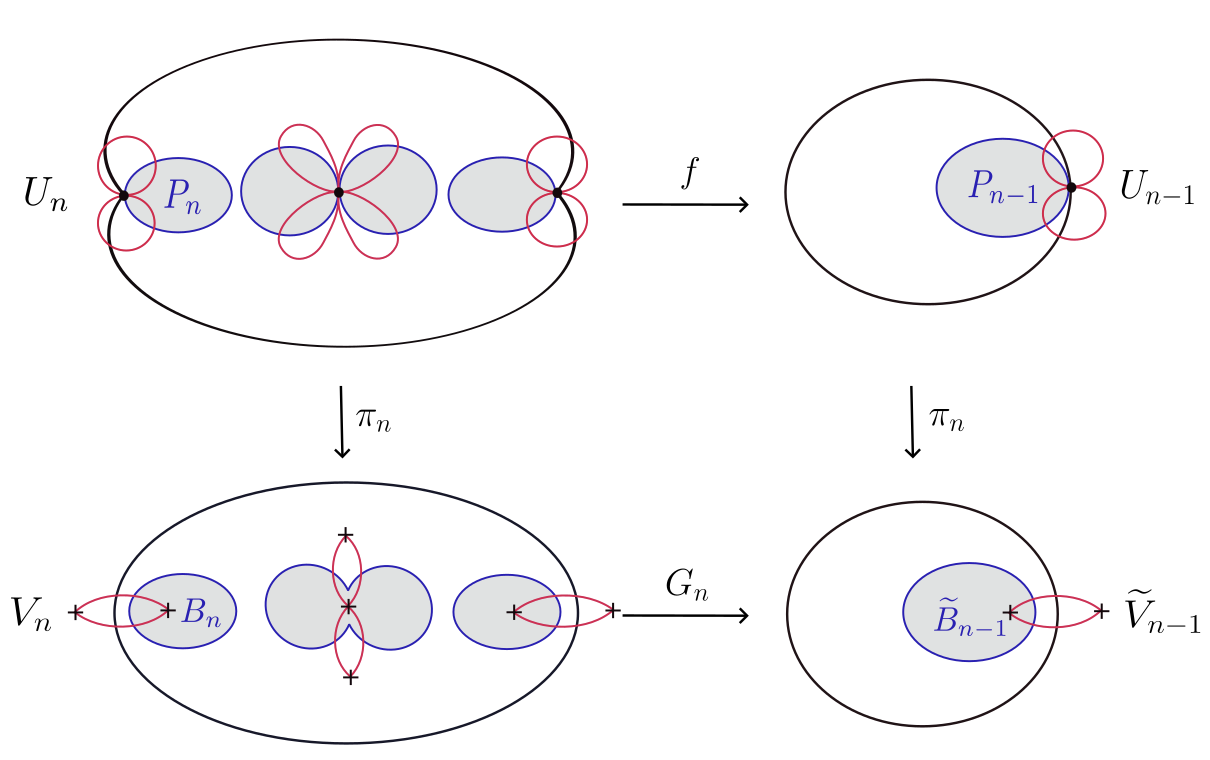}
 	\caption{ The induced map $G_{n}$ after surgery.  $U_n$ ($U_{n-1}$) and $V_{n}$ ($\widetilde{V}_{n-1}$) denote the complements of the domains marked in gray.}
 	\label{fig:3}
 \end{figure}
Since the boundary components of $V_n$ and $\widetilde{V}_{n-1}$ are all smooth Jordan curves, the map $G_n$ extends to every component of $\partial V_n$ as a differentiable covering. Moreover, as $\widetilde{V}_{n-1}$ is compactly contained in $V_n$, the $G_n$-orbit of any point in $V_n$ falls to or converges to the fixed point $x_n$.
	
\subsubsection{Quasi-conformal extension of $G_{n}$}\label{sec:extend}
\quad\ \ For each $n\geq 1$, we will extend $G_{N+n}: V_{N+n}\to \widetilde{V}_{N+n-1}$ to a $d$-fold quasi-regular map on $\widehat{\mathbb C}$. The following two lemmas will be used in the extension of $G_n$.

\begin{lemma}\label{lem2.3}{\rm(\cite[Lemma 3.1]{CP})}
Let $\gamma_1$ and $\gamma_2$ be two Jordan curves in $\mathbb{C}$, with $q_1\in{\rm int}\widehat{\gamma_1}$ and $q_2\in{\rm int}\widehat{\gamma_2}$. Then given an integer $d_0\geq 1$, there exists a holomorphic proper map $h:{\rm int}\widehat{\gamma_1}\rightarrow{\rm int}\widehat{\gamma_2}$ of degree $d_0$ such that $h(q_1)=q_2$ and $q_1$ is the only possible branch point. Moreover, if $\gamma_1$ and $\gamma_2$  smooth Jordan curves, then $h:\gamma_1\rightarrow\gamma_2$ is smooth.
\end{lemma}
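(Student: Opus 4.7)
The plan is to reduce everything to the model map $z\mapsto z^{d_0}$ on the unit disk $\mathbb{D}$ via the Riemann mapping theorem. Let $\phi_i:\mathrm{int}\,\widehat{\gamma_i}\to\mathbb{D}$ be a Riemann map with $\phi_i(q_i)=0$, for $i=1,2$; such a map exists and is unique up to rotation because each $\mathrm{int}\,\widehat{\gamma_i}$ is a simply connected proper subdomain of $\mathbb{C}$. Define
\[
h \;:=\; \phi_2^{-1}\circ p\circ \phi_1, \qquad p(z):=z^{d_0}.
\]
Then I would check the four claimed properties in turn.

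First, $h$ is holomorphic since it is a composition of holomorphic maps. Second, $h$ is proper of degree $d_0$: because $\phi_1$ and $\phi_2$ are biholomorphic, properness and degree are preserved under pre- and post-composition, and $p:\mathbb{D}\to\mathbb{D}$ is proper of degree $d_0$. Third, $h(q_1)=\phi_2^{-1}(p(0))=\phi_2^{-1}(0)=q_2$. Fourth, the unique critical point of $p$ in $\mathbb{D}$ is $0$ (when $d_0\geq 2$; when $d_0=1$, $p$ is a biholomorphism and $h$ has no branch points), so the unique possible branch point of $h$ is $\phi_1^{-1}(0)=q_1$.

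For the smoothness statement, I would invoke boundary regularity of conformal maps. If $\gamma_i$ is a smooth (say $C^{\infty}$) Jordan curve, then by the classical Kellogg--Warschawski theorem the Riemann map $\phi_i$ extends to a diffeomorphism from $\overline{\mathrm{int}\,\widehat{\gamma_i}}$ onto $\overline{\mathbb{D}}$ that is smooth up to the boundary (and so does $\phi_i^{-1}$). Since $p$ restricts to a smooth $d_0$-fold covering $\partial\mathbb{D}\to\partial\mathbb{D}$, the composition $h=\phi_2^{-1}\circ p\circ\phi_1$ extends smoothly to $\gamma_1$ and maps it as a smooth $d_0$-fold covering onto $\gamma_2$.

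The only real subtlety is invoking the correct boundary regularity result at the right level of smoothness: for merely $C^1$ curves one needs Kellogg's theorem, while the $C^{\infty}$ case follows from its higher-regularity refinements; the rest of the argument is a formal consequence of uniformization and the model map $z\mapsto z^{d_0}$. No freedom is lost beyond the rotational ambiguity of each Riemann map, which is irrelevant because the conclusions are invariant under rotations of $\mathbb{D}$.
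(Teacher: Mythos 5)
Your proof is correct and is the standard argument; note that the paper itself does not prove this lemma but simply cites \cite[Lemma 3.1]{CP}, and the construction there is the same reduction to the model map $z\mapsto z^{d_0}$ on $\mathbb{D}$ via Riemann maps normalized to send $q_i$ to $0$, with Kellogg--Warschawski supplying boundary smoothness. Nothing is missing, and there is no essentially different route to take here.
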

	
\begin{lemma}\label{lem2.4}{\rm(\cite[Lemma 3.2]{CP})}
Let $A_i\subset \mathbb{C}$ be an annulus with the inner boundary $I_i$ and the outer boundary $O_i$, such that $I_i$ and $O_i$ are smooth Jordan curves for $i=1,2$. Suppose that $h_1: I_1\rightarrow I_2$ and $h_2: O_1\rightarrow O_2$ are both $d_0$-fold differentiable covering maps. Then there exists a $d_0$-fold quasi-regular covering map $R: \overline{A_1}\rightarrow\overline{A_2}$  such that $R|_{I_1}=h_1$ and  $R|_{O_1}=h_2$.
\end{lemma}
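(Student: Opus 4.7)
The plan is to reduce to the case of round annuli and then interpolate in logarithmic coordinates. Since $I_i$ and $O_i$ are smooth Jordan curves, each annulus $A_i$ is conformally equivalent to a round annulus $\{1<|z|<R_i\}$ for some $R_i>1$, and by the Kellogg--Warschawski reflection principle the uniformizing map extends smoothly to the closed annulus. Conjugating $h_1$ and $h_2$ by these smooth uniformizations, it suffices to construct the desired quasi-regular map between round annuli with smooth boundary coverings. So assume from now on $A_i=\{1<|z|<R_i\}$, and $h_1, h_2$ are $d_0$-fold smooth covering maps of the corresponding circles.

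Next I would pass to universal covers via the exponential map. Write $w=s+i\theta$ and identify $\overline{A_1}$ with the strip $\Sigma_1=\{0\leq s\leq \log R_1\}/(\theta\sim\theta+2\pi)$, and $\overline{A_2}$ with $\Sigma_2=\{0\leq s\leq \log R_2\}/(\theta\sim\theta+2\pi)$. The smooth covering maps $h_1,h_2$ lift to smooth diffeomorphisms of $\mathbb{R}$ of the form $\theta\mapsto \psi_j(\theta)$ ($j=1,2$) satisfying $\psi_j(\theta+2\pi)=\psi_j(\theta)+2\pi d_0$ and $\psi_j'(\theta)>0$. Now define $R$ in strip coordinates by a linear homotopy in the $s$-variable:
\[
R(s+i\theta)=\frac{s}{\log R_1}\log R_2 \;+\; i\!\left(\frac{\log R_1-s}{\log R_1}\,\psi_1(\theta)+\frac{s}{\log R_1}\,\psi_2(\theta)\right).
\]
The monodromy condition $\psi_j(\theta+2\pi)-\psi_j(\theta)=2\pi d_0$ guarantees that this descends to a well-defined smooth map $R\colon \overline{A_1}\to\overline{A_2}$, and by construction $R|_{s=0}=h_1$, $R|_{s=\log R_1}=h_2$.

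The covering property and the degree are checked in strip coordinates: in the $\theta$-direction the map is a diffeomorphism of each horizontal line onto $\mathbb{R}$ (both $\psi_1,\psi_2$ are strictly increasing, hence so is any convex combination), and the translation $\theta\mapsto \theta+2\pi$ is sent to $\theta\mapsto \theta+2\pi d_0$, which gives a $d_0$-fold covering after quotienting. For quasi-regularity, a direct computation of $\partial R/\partial s$ and $\partial R/\partial \theta$ shows these partials are smooth on the compact strip, with $\partial R/\partial s$ having positive real part and $\partial R/\partial \theta$ having positive imaginary part bounded away from zero; hence the Jacobian is positive and the Beltrami coefficient $\mu_R=\overline{\partial}R/\partial R$ satisfies $\|\mu_R\|_\infty<1$ on the compactum. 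Pulling back through the exponential and the initial conformal uniformizations (both of which are smooth up to the boundary) preserves bounded distortion, and we obtain the desired $d_0$-fold quasi-regular covering $R\colon\overline{A_1}\to\overline{A_2}$.

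The main technical point is not the interpolation itself but ensuring that everything is genuinely quasi-regular and not merely smooth in the interior: this is why I insist on smooth boundary data and use the Kellogg--Warschawski extension at the uniformization stage. The compactness of the closed annulus and the smoothness of $\psi_1,\psi_2$ then make the bound $\|\mu_R\|_\infty<1$ automatic.
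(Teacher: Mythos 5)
Your proof is correct and uses the standard technique for this kind of interpolation lemma. The paper itself does not prove this statement -- it is quoted as \cite[Lemma 3.2]{CP} -- so there is no internal proof to compare against, but the route you take (uniformize both annuli to round annuli with boundary regularity supplied by Kellogg--Warschawski, lift to strip coordinates via the logarithm, and interpolate linearly in the radial variable) is the natural and, as far as I know, the same one used in the cited reference. The computation of $\partial R/\partial s$ and $\partial R/\partial\theta$ is clean: with $\Psi'(s,\theta):=\tfrac{\log R_1-s}{\log R_1}\psi_1'(\theta)+\tfrac{s}{\log R_1}\psi_2'(\theta)$, the Jacobian equals $\tfrac{\log R_2}{\log R_1}\,\Psi'>0$, which on the compact closed strip is bounded away from $0$ while the partials are bounded above, giving $\|\mu_R\|_\infty<1$ as claimed. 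Two small presentational remarks. First, the hypothesis reads ``differentiable covering maps''; your argument needs $C^1$ (continuity of $\psi_j'$, used to get the lower bound on $\Psi'$ by compactness), so you should either read ``differentiable'' as $C^1$ -- which is clearly the intended meaning and is what holds in the application, where the boundary maps come from holomorphic $G_{N+n}$ and $h_i$ -- or say so explicitly. Second, you implicitly assume both $h_1$ and $h_2$ are orientation-preserving (equivalently $\psi_j'>0$); this is necessary for any quasi-regular extension to exist and again holds in the application, but it is worth stating since it is not literally in the lemma's wording.
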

	
Denote by $\mathcal{E}_f^{\rm crit}$ the set of all critical elements of  ${\cal E }_f$. This set is  finite by Lemma \ref{lem:finite}. Set
\begin{center}
	\begin{math}
		\begin{aligned}
			{\cal E }_f^{*}=\bigcup \{\sigma_f^k(E)\, | &\, k\geq0,\, E\in{\cal E}_f^{\rm crit}  \text{  is preperiodic and its orbit} \\&\text{contains critical periodic components}\}.
		\end{aligned}
	\end{math}
\end{center}
Obviously, ${\cal E }_f^*$ is a finite set and $\sigma_f({\cal E }_f^*)\subset{\cal E }_f^*$. Set $${\cal E}_f^{\rm crit}\cup {\cal E }_f^*=\big\{ E_f^1, E_f^2, \cdots, E_f^{l}\big\}.$$ For each $1\leq k\leq l$, we  choose a preferred point $z^k\in E_f^k\sm \bigcup_{m\geq 0}f^{-m}(0)$.
	
Fix an $n\geq1$. Let $\{B_1,\ldots, B_{i_n}\}$  and  $\{\widetilde{B}_1,\ldots,\widetilde{B}_{j_n}\}$ denote the collection of components of $\mathbb C\sm V_{N+n}$ and  $\mathbb C\sm \widetilde{V}_{N+n-1}$, respectively. Then it is enough to suitably extend the covering map $G_{N+n}:\partial V_{N+n}\to \partial \widetilde{V}_{N+n-1}$ to the interiors of $B_1,\ldots,B_{i_n}$. \vskip 0.1cm
	
By enlarging $N$ if necessary, we may assume that each depth-$0$ puzzle piece contains at most one element of $\{E_f^1, E_f^2, \cdots, E_f^{l}\}$. It follows that each of $\widetilde{B}_1, \ldots, \widetilde{B}_{j_n}$, and hence each of $B_1, \ldots, B_{i_n}$, contains at most one \textbf{marked point} $z_n^k := \pi_n(z^k)$ for $k = 1, \ldots, l$.

For each $j\in\{1,\ldots,j_n\}$, since $\widetilde{V}_{N+n-1}$ is compactly contained in $V_{N+n}$, we can choose a disk $\widetilde{D}_j\Subset {\rm int}\widetilde{B}_j$ with smooth boundary, such that $\widetilde{D}_j$ contains the unique marked point in $\widetilde{B}_j$ (if existing), and that the annulus $\widetilde{A}_j:={\rm int}\widetilde{B}_j\setminus \widetilde{D}_j$ is contained in $V_{N+n}$.
	
For each $i\in\{1,\ldots,i_n\}$, we choose a disk ${D}_i\Subset {\rm int}{B}_i$ with smooth boundary such that ${D}_i$ contains the unique marked point in ${B}_i$ (if existing). Then ${A}_i:={\rm int}{B}_i\setminus {D}_i$ is annulus.
	
For any $i\in\{1,\ldots,i_n\}$, there exists a unique $j=j(i)\in\{1,\ldots,j_n\}$ such that $G_{N+n}:\partial B_i\to \partial\widetilde{B}_j$ is a covering of degree $m_i$. By Lemma \ref{lem2.3}, we obtain a holomorphic proper map $h_{i}: D_i\to \widetilde{D}_j$ of degree $m_i$, such that
\begin{itemize}
\item $h_i$ extends to a smooth covering map from $\partial D_i$ to $\partial\widetilde{D}_j$;
\item if $m_i>1$, then the marked point in $D_i$ is the unique branch point of $h_i$;
\item if both $D_i$ and $\widetilde{D}_j$ contain marked points $z^i_n$ and $z^j_n$ respectively, then $h_i(z^i_n)=z^j_n$.
\end{itemize}
	
By Lemma \ref{lem2.4}, there exists an $m_i$-fold quasi-regular covering $R_i:A_i\to\widetilde{A}_j$ between annuli, such that $R_i$ coincides with $G_{N+n}$ on the outer boundary $\partial B_i$, and coincides with $h_i$ on the inner boundary $\partial D_i$.
	\begin{figure}[htbp]
		\centering
		\includegraphics[width=13cm]{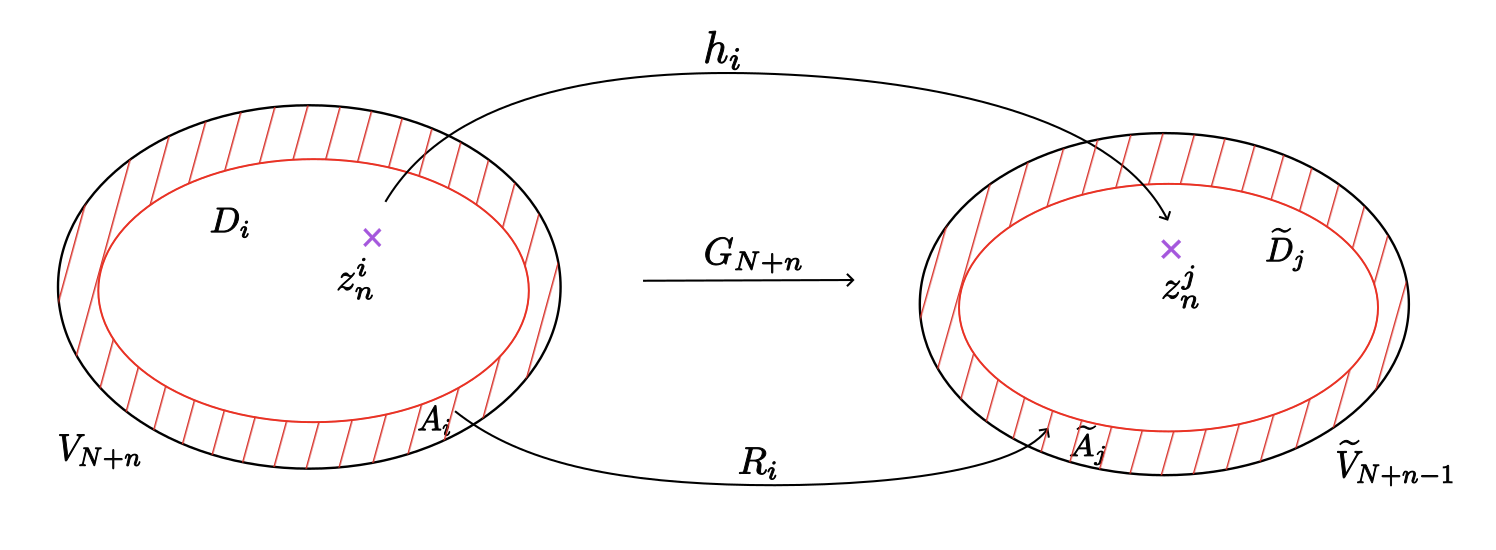}
		\caption{Quasi-conformal surgery. }
		\label{fig:4}
	\end{figure}
	
Thus, we obtain a $d$-fold quasi-regular map $F_n:\widehat{\mathbb C}\to \widehat{\mathbb C}$ defined as
\[F_n(z):=\left\{
\begin{array}{ll}
	G_{N+n}(z), & \hbox{if $z\in\overline{V_{N+n}}$;} \\[2pt]
	R_i(z), & \hbox{if $z\in A_i,\,i=1,\ldots,i_n$;}  \\[2pt]
	h_i(z), & \hbox{if $z\in\overline{D_i},\,i=1,\ldots,i_n$.}
\end{array}
\right.
\]
From the construction, we  see that the $F_n$-orbit of each point in $\widehat{\mathbb C}$ passes through the non-holomorphic part of $F_n$ only once. Then the following (quasiconformal) Surgery Principle, due to Shishikura \cite{Lectures}, can be applied to $F_n$.
	
\begin{lemmaA}\label{lemA}{\rm(\cite[Lemma 15]{Lectures})}
Suppose that $F: \widehat{\mathbb{C}}\rightarrow\widehat{\mathbb{C}}$ is a quasi-regular map and $\sigma$ is a bounded measurable conformal structure such that $F^{*}\sigma=\sigma$ almost everywhere outside a measurable set $X$. If each orbit of $F$ passes through $X$ at most once, then there exists a quasiconformal map $\kappa: \widehat{\mathbb{C}}\rightarrow\widehat{\mathbb{C}}$ such that $R=\kappa\circ h\circ\kappa^{-1}$ is a rational map.
\end{lemmaA}
By this surgery principle, there exists a quasiconformal map $\kappa_{n}:\widehat{\mathbb C}\to \widehat{\mathbb C}$  such that $f_{n}=\kappa_{n}\circ F_{n}\circ \kappa^{-1}_{n}$ is a rational map of degree $d$, and
\[ \text{ the  map $\kappa_{n}$ is conformal in $V_{N+n}$}. \eqno(\ast\ast) \]
Then the following diagrams commute:
\begin{equation}\label{eq:22}
	\begin{tikzcd}
		U_{N+n}\sm\overline{S_{N+n}(t^2)} \arrow[r,"\pi_{N+n}", shorten <=10pt, shorten >=5pt]\arrow[d, "f"]
		&V_{N+n}\arrow[r,"id",hook,shorten <=5pt, shorten >=0pt] \arrow[d, "G_{N+n}"]
		& \widehat{\mathbb{C}}\arrow[r,"\kappa_{n}"]\arrow[d, "F_{n}",shorten <=1pt, shorten >=1pt]
		&  \widehat{\mathbb{C}}\arrow[d, "f_{n}",shorten <=1pt, shorten >=1pt]\\ U_{N+n-1}\sm\overline{S_{N+n-1}(t^2)} \arrow[r,"\pi_{N+n}"]
		& \widetilde{V}_{N+n-1}\arrow[r,"id",hookrightarrow]
		&\widehat{\mathbb{C}}\arrow[r,"\kappa_{n}"]
		&\widehat{\mathbb{C}}
	\end{tikzcd}
\end{equation}
Note that $\kappa_n(V_{N+n})$ is contained in an attracting Fatou domain of $f_n$ and ${\rm deg}(f_n|_{\kappa_n(V_{N+n})})=d$. Thus this attracting Fatou domain is completely invariant.
	
In fact, all objects in \eqref{eq:22} depend on both $n$ and the number $t\in(0,1)$. Here we omit the subscript $t$ because it is fixed. In general, we can write these objects as $\pi_{n,t},\kappa_{n,t},f_{n,t}$ etc. Then the fundamental sequences $\{f_{n,t} \ | \ n\geq1,t\in(0,1)\}$ are constructed.
	
\section{Construction of simple parabolic maps}
This section is devoted to proving Theorem \ref{thm:1.1}. We first verify that the fundamental sequence $\{f_{n,t}\}_{n \geq 1}$ contains a subsequence converging to a simple attracting map $f_t$ for any $t \in (0,1)$ (see Proposition \ref{model}); we then show that a certain subsequence of $\{f_t\}_{t \in (0,1)}$ converges to a simple parabolic map as required in Theorem \ref{thm:1.1} (see Proposition \ref{pro:parabolic}).

For any rational map $g$ with an infinitely-connected completely invariant Fatou domain, we always denote this specific Fatou domain by $U_g$. Recall  the notations  $\mathcal{E}_g=\mathcal{E}_g(U_g)$ and $\sigma_g:\mathcal{E}_g\to \mathcal{E}_g$ from Section \ref{sec:2.1}.
	
\begin{proposition}\label{model}
For any $t\in(0,1)$, there exist a subsequence of $\{f_{n,t}\}_{n\geq1}$  that converges uniformly to a simple attracting map $f_t$ of degree $d$, and a bijection $\xi_t:\mathcal{E}_f\to \mathcal{E}_{f_t}$ satisfying that $\xi_t\circ \sigma_f(E)=\sigma_{f_t}\circ \xi_t(E)$  and $\deg_E\sigma_f=\deg_{\xi_t(E)}\sigma_{f_t}$ for all $E\in {\cal E }_f$.
\end{proposition}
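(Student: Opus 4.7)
The plan is to extract a uniformly convergent subsequence of $\{f_{n,t}\}_{n \geq 1}$ in the space of degree-$d$ rational maps, and then transport the puzzle combinatorics of Lemma \ref{puzzle} through the conjugations $\kappa_{n,t}$ to build the bijection $\xi_t$ and verify the simple-attracting structure.

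\textbf{Extraction of $f_t$.} I first normalize each $f_{n,t}$ by post-composition with a M\"obius transformation that sends the attracting fixed point $\kappa_{n,t}(x_n)$ to $0$ and two auxiliary reference points to $1$ and $\infty$. The normalized family then lies in a bounded region of the moduli space of degree-$d$ rational maps, so a subsequence converges uniformly on $\widehat{\mathbb{C}}$ to a rational map $f_t$. Degree preservation follows from Hurwitz applied near the limiting attracting fixed point, using that $G_{N+n}$ is a degree-$d$ attracting self-map of $V_{N+n}$ as established in the plumbing surgery subsection. By property $(\ast\ast)$ the maps $\kappa_{n,t}$ are conformal on $V_{N+n}$; passing to a Carath\'eodory kernel, their images exhaust a domain inside the immediate basin of the limiting attracting fixed point, forcing $f_t$ to admit a completely invariant attracting Fatou domain $U_{f_t}$ on which it has degree $d$.

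\textbf{Construction of $\xi_t$.} For each $E \in \mathcal{E}_f$ and each $n \geq 0$, the puzzle piece $P_n(E)$ of Lemma \ref{puzzle} transfers via the surgery (apply $\pi_{N+n,t}$ to $P_n(E) \setminus \overline{S_{N+n}(t^2)}$, fill in the punctures, and push forward by $\kappa_{n,t}$) to a compact set $Q_n(E) \subset \widehat{\mathbb{C}} \setminus U_{f_{n,t}}$. A diagonal extraction over the countable set $\mathcal{E}_f$ yields a further subsequence along which $Q_n(E)$ Hausdorff-converges to a compact set $Q_\infty(E)$ for every $E$. I then define $\xi_t(E)$ to be the unique component of $\partial U_{f_t}$ contained in $Q_\infty(E)$; uniqueness holds because $\partial Q_n(E)$ lies in $\overline{U_{f_{n,t}}}$ away from finitely many puncture points, so limits detect a single outer boundary component. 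Commutativity of \eqref{eq:22} passes $\xi_t \circ \sigma_f = \sigma_{f_t} \circ \xi_t$ to the limit, and the local-degree equality $\deg_E \sigma_f = \deg_{\xi_t(E)} \sigma_{f_t}$ is preserved because, for $n$ large, the defining annulus for $\deg_E \sigma_f$ can be chosen in a region on which both $\pi_{N+n,t}$ (by $(\ast)$) and $\kappa_{n,t}$ (by $(\ast\ast)$) are conformal. Injectivity of $\xi_t$ follows from the disjointness clause Lemma \ref{puzzle}(P1)(d); surjectivity follows from (P2) applied on the $f_t$ side through the inverse transport. To see that $f_t$ is simple attracting, observe that a non-singleton $\xi_t(E)$ requires $E$ non-singleton, whose orbit must meet $\mathcal{E}_f^{\rm crit}$ by Lemma \ref{lem:component}(1). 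The surgery installs a branch point of $h_i$ at the marked point $z_n^k$, which in the limit becomes a critical (eventually) fixed point whose superattracting basin is exactly the interior of $\widehat{\xi_t(E_f^k)}$, and which contains the single postcritical point inherited from the preferred point $z^k$. Lemma \ref{lem:component}(2) then delivers the quasi-circle conclusion, since no parabolic points survive in the attracting limit.

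\textbf{Main obstacle.} The delicate point is guaranteeing that $Q_\infty(E)$ contains exactly one nontrivial component of $\partial U_{f_t}$, neither collapsing into $U_{f_t}$ nor coalescing with a neighbouring component. This hinges on uniform-in-$n$ lower bounds on the moduli of the annuli separating $P_{n-1}(E)$ from $P_n(E)$ after surgery; such bounds should be inherited from the corresponding moduli in $(f, U)$ via property $(\ast)$, since $\pi_{N+n,t}$ is conformal away from the small sepals $\overline{S_{N+n}(t)}$ and therefore preserves the relevant modulus estimates as $n \to \infty$.
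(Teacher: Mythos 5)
Your proposal identifies the right ingredients (extract a convergent subsequence, transport the puzzle structure, count critical points to get the simple-attracting structure) and follows the same broad architecture as the paper, but there are three places where the argument as written has genuine gaps, and the resolution you sketch for your ``main obstacle'' does not work.

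First, the extraction step. You assert that after a M\"obius normalization the family $\{f_{n,t}\}$ ``lies in a bounded region of the moduli space of degree-$d$ rational maps,'' but this is exactly what needs proof: normalizing three points does not by itself preclude degeneration of a sequence of degree-$d$ rational maps. The paper instead normalizes the conjugating maps $\psi_n = \kappa_n\circ\pi_{N+n}$ (which are holomorphic on exhausting subdomains of $U\setminus\bigcup_k \overline{S_k(t^2)}$ and univalent on a fixed large subdomain by $(\ast)$ and $(\ast\ast)$) and invokes normality of univalent functions, extracting $\psi_{n_k}\to\psi_t$ by Cantor diagonalization. Convergence of $f_{n_k}=\psi_{n_k}\circ f\circ \psi_{n_k}^{-1}$ then follows on a neighborhood of $\infty$, and Lemma \ref{converge} upgrades this to global uniform convergence to a rational map of degree exactly $d$. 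Your sketch never actually establishes precompactness; you need this detour through the univalent family.

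Second, and this is where you flag the ``main obstacle,'' your construction of $\xi_t$ via Hausdorff limits of $Q_n(E)$ along the subsequence is significantly more delicate than what is needed, and your proposed fix is wrong. You claim the required uniform modulus lower bounds ``should be inherited from the corresponding moduli in $(f,U)$.'' But in $(f,U)$, consecutive parabolic puzzle pieces \emph{touch} at iterated preimages of the parabolic point (Lemma \ref{puzzle}(P1)(e)), so the separating annuli there have \emph{zero} modulus; this is precisely the obstruction the paper describes as motivating the plumbing in the first place. The positive moduli after surgery come from the plumbing with $t$ fixed, not from $(f,U)$. The paper sidesteps the whole Hausdorff-limit issue by working with the single limiting conformal map $\psi_t$ directly: it defines $\xi_n(P_n)=Q_n$ via $\psi_t(\partial P_n\setminus\overline{S_{N+n}(t^2)})=\partial Q_n$, so that the attracting puzzle pieces $Q_n$ are honestly nested with compact containment, and sets $\xi_t(E)=\partial\bigcap_n\xi_n(P_n(E))$. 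No quantitative modulus estimate or limit-of-limits extraction is needed.

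Third, you do not actually rule out the possibility that the attracting fixed points $\kappa_{n,t}(x_n)$ of $f_{n,t}$ (multiplier $|\lambda_n|<1$) degenerate to a parabolic fixed point in the limit ($|\lambda|=1$). The Carath\'eodory kernel remark does not by itself exclude this. The paper handles it concretely: Claim \ref{claim:2} shows the arcs $\psi_t\circ L_+^s$ land on fixed points $a_t,b_t$, and then an explicit domain $W_0$ with $f_t(W_0)\Subset W_0$ is produced, forcing $a_t$ to be attracting. This is a necessary step, not an optional detail. Finally, your critical-point count for the simple-attracting structure is on the right track but needs the injection $\alpha_{n_k}$ of preferred marked points and the identity $2d-2-M$ as in Claim \ref{claim:3} to conclude that each $\widehat{E}$ with $E\in\mathcal{E}_{f_t}^{\rm crit}\cup\mathcal{E}_{f_t}^*$ contains exactly one critical or postcritical point; merely observing that surgery ``installs a branch point'' does not suffice without the count.
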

	
The following convergence result given in \cite[Lemma 3.4]{CP} will be used in the proof.
	
\begin{lemma}\label{converge}
Let  $\left\{g_n\right\}_{n\geq 1}$ be a sequence of rational maps with  degree $d\geq 1$, and $D\subset\widehat{\mathbb{C}}$ a non-empty open set. If $g_n$ converges uniformly  to a map $g$ on $D$, then $g$ is a restriction of a rational map of degree $d_0\leq d$. Moreover, $d_0=d$ implies that $g_n$ converges uniformly to $g$ on $\widehat{\mathbb{C}}$.
\end{lemma}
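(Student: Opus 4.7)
The plan is to use the standard compactification of the parameter space of rational maps of degree at most $d$. Write each $g_n = P_n/Q_n$ with coprime polynomials $P_n, Q_n$ of degree at most $d$ satisfying $\max(\deg P_n, \deg Q_n) = d$. Normalizing so that the coefficient vector $c_n \in \mathbb{C}^{2d+2}$ has unit Euclidean norm, compactness of the unit sphere allows me to extract a subsequence along which $c_n \to c^*$, giving limit polynomials $P^*, Q^*$, not both zero. Set $K := \gcd(P^*, Q^*)$, so that $g^* := (P^*/K)/(Q^*/K)$ is a rational map of some degree $d_0 \leq d$; let $H \subset \widehat{\mathbb{C}}$ denote the finite \emph{hole set} consisting of common zeros of $P^*$ and $Q^*$ (including $\infty$ when the degree-$d$ coefficients of both $P^*$ and $Q^*$ vanish).

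A chart-by-chart computation using $P_n \to P^*$ and $Q_n \to Q^*$ uniformly on compact sets shows that $g_n \to g^*$ uniformly on compact subsets of $\widehat{\mathbb{C}} \setminus H$. Combined with the hypothesis that $g_n \to g$ uniformly on $D$, it follows that $g = g^*$ on the open dense subset $D \setminus H$. Since $g$ is continuous on $D$ (as a uniform limit of continuous maps) and $g^*$ is continuous on $\widehat{\mathbb{C}}$, continuity forces $g = g^*|_D$, which is the first assertion.

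For the ``moreover'' part, suppose $d_0 = d$. Writing $g^* = \widetilde{P}/\widetilde{Q}$ in lowest terms yields $\max(\deg \widetilde{P}, \deg \widetilde{Q}) = d$; but $P^* = K\widetilde{P}$ and $Q^* = K\widetilde{Q}$ both have degree at most $d$, forcing $\deg K = 0$. Hence $H = \emptyset$ and the extracted subsequence converges uniformly on all of $\widehat{\mathbb{C}}$ to $g^*$. To upgrade this from a subsequence to the full sequence, I apply a standard two-subsequence argument: every subsequence of $\{g_n\}$ contains a further subsequence to which the above analysis applies, producing a rational limit of degree at most $d$ that agrees with $g$ on $D$; since a rational map is determined by its values on any open set, each such subsequential limit equals $g^*$, and therefore the whole sequence $\{g_n\}$ converges uniformly to $g^*$ on $\widehat{\mathbb{C}}$.

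The main technical obstacle is the second step: establishing the uniform convergence $g_n \to g^*$ on compact subsets of $\widehat{\mathbb{C}} \setminus H$, despite the possibility that $P_n, Q_n$ are coprime for each finite $n$ while acquiring a common divisor $K$ only in the limit. The delicate point is handling near-cancellations in the ratio $P_n/Q_n$ by extracting an approximate common factor associated with $K$, and then analyzing the behavior separately near the finite zeros of $Q^*/K$ (in the standard chart) and near $\infty$ (in the chart $w = 1/z$), so that the degree bookkeeping at infinity matches the definition of $H$. Once this uniform convergence off $H$ is in place, the remaining identifications are formal.
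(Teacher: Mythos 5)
You should first note that the paper itself offers no proof of this lemma: it is imported verbatim from Cui--Peng \cite{CP} (Lemma 3.4), so there is no in-paper argument to compare against. Your proof via the coefficient compactification of degree-$d$ rational maps (normalize $(P_n,Q_n)$ on the unit sphere of $\mathbb{C}^{2d+2}$, pass to a subsequential limit $(P^*,Q^*)$, divide by $K=\gcd(P^*,Q^*)$, and show convergence to the reduced map $g^*$ away from the finite hole set $H$) is the standard route and is essentially correct, including the identification $g=g^*|_D$ by density of $D\setminus H$ in $D$, the degree bookkeeping $\deg K=0$ when $d_0=d$, and the two-subsequence upgrade from a subsequence to the full sequence. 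Two remarks. First, the step you single out as the main technical obstacle is easier than you make it sound: on a compact set $E\subset\widehat{\mathbb{C}}\setminus H$, every point has a neighborhood (in the chart $z$ or $w=1/z$) on which either $|P^*|$ or $|Q^*|$ is bounded below; on such a neighborhood $K$ has no zeros, so $P^*/Q^*$ (or its reciprocal) already equals $g^*$ (or $1/g^*$) there, and spherical-uniform convergence of $P_n/Q_n$ follows at once from coefficient convergence by comparing $P_n/Q_n$ with $P^*/Q^*$ where $Q^*$ is bounded below and $Q_n/P_n$ with $Q^*/P^*$ where $P^*$ is bounded below. No extraction of an ``approximate common factor'' from the coprime pairs $(P_n,Q_n)$ is needed; the cancellation happens only in the limit ratio, which is harmless. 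Second, to be complete you should say explicitly that all uniform convergence is with respect to the spherical metric (otherwise statements near poles are vacuous), and you should record the degenerate cases $P^*\equiv 0$ or $Q^*\equiv 0$ (excluded from being simultaneous by the unit-norm normalization), in which $g^*$ is the constant $0$ or $\infty$, of degree $0\le d$; these are consistent with the first assertion and make the ``moreover'' clause inapplicable since $d\ge 1$. With these points made explicit, the argument is complete.
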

	
\begin{proof}[Proof of Proposition \ref{model}]
The outline is similar to that of \cite[Proposition 1.1]{CP}, but with more complexity  involved since $U$ is a parabolic Fatou domain and we have performed a plumbing surgery to $(f,U)$.
		
Fix any $t\in(0,1)$. For simplicity of  the discussion below, any object $Y_n$ refers to a double-subscript object $Y_{n,t}$, and the convergence $Y_n\to Y_t$ means $Y_{n,t}\to Y_t$ as $n\to \infty$.
		
For each $n\geq 1$, define $$\psi_{n}=\kappa_{n}\circ\pi_{N+n}\ \text{ on }\ \widehat{\mathbb C}\setminus \overline{S_{N+n}(t^2)},$$ where $N$ is the number  such that $\mathbb C\setminus U_N$ is the union of all depth-$0$ parabolic puzzle pieces of $(f,U)$. Then $\psi_{n}$ is holomorphic in $U_{N+n}\sm\overline{S_{N+n}(t^2)}$.
By the statements ($\ast$) and ($\ast\ast$), we further obtain that
		
\begin{claim}\label{claim:univalent}
The map $\psi_n$ is univalent on both $U_{N+n}\sm\overline{S_{N+n}(t)}$ and $D\cap U_{N+n}$, where $D$ is any component of $S_{N+n}(1)\setminus\overline{S_{N+n}(t^2)}$.
\end{claim}
		
Normalize $\psi_{n}$  such that it fixes $\infty$ and two other points near $\infty$. Then $\left\{ \psi_{n}\right\}_{n\geq 1}$  is a normal family on $U_{N+k}\sm\overline{S_{N+k}(t^2)}$ for each $k\geq 1$. By Cantor's diagonal method, there exists a subsequence $\left\{ \psi_{n_k}\right\}_{k\geq1}$  that converges locally and uniformly to a holomorphic map $\psi_t$ on
\[\bigcup_{k\geq 1}\big(U_{N+k}\sm\overline{S_{N+k}(t^2)}\big)=U\sm\bigcup_{k\geq 1} \overline{S_{N+k}(t^2)}=U\sm\bigcup_{k\geq 0} \overline{S_k(t^2)}.\]

By Claim \ref{claim:univalent}, $\psi_t$ is univalent on both $U \setminus \bigcup_{k \geq 0} \overline{S_k(t)}$ and each component of $S_k(1) \setminus \overline{S_k(t^2)}$ for every $k \geq 0$. Fix any $z \in S_0 \setminus \overline{S_0(t^2)}$. For any $n \geq 0$, it follows from the definitions of $\pi_n$ and $\tau_n$ that $\pi_n(\tau_0(z)) = \pi_n(\tau_n(z)) = \pi_n(z)$. Thus, for each sufficiently large $k$, we have $\psi_{n_k} \circ \tau_0(z) = \psi_{n_k}(z)$. Consequently,
\begin{equation}\label{eq:33}
\psi_t\circ\tau_0(z)=\psi_t(z)  \quad \text{for all }  z\in S_0\setminus\overline{S_0(t^2)}.
\end{equation}
		
We conclude from  Lemma \ref{converge} that $f_{n_k}$, which coincides with $\psi_{n_k}\circ f\circ\psi_{n_k}^{-1}$ in a neighborhood of $\infty$,  converges uniformly in $\widehat{\mathbb{C}}$ to a rational map $f_t$ of degree $d$, and
\begin{equation}\label{eq:44}
    \psi_t\circ f(z)=f_t\circ \psi_t(z) \quad \text{for all }  z\in U\sm\bigcup_{k\geq 0} \overline{S_{k}(t^2)}.
\end{equation}
Define the domain $V_t=\psi_t\big( U\sm\bigcup_{k\geq 0} \overline{S_{k}(t^2)}\big)$. Then formula \eqref{eq:44} implies that $f_t(V_t)=V_t$ and ${\rm deg}(f_t|_{V_t})=d$. So $V_t$ is contained in a completely invariant Fatou domain $U_{f_t}$ of $f_t$. We will show that $U_{f_t}$ is an attracting Fatou domain of $f_t$. \vskip 0.1cm
		
Recall that  $L_\pm(s)=\partial S_\pm(s)\sm\{0\}$ for $s\in(0,1)$. Since $f:L_\pm(s)\to L_\pm(s)$ is a homeomorphism by \eqref{eq:11}, we can parameterize $$L_+(s)=L^s_+:(-\infty,+\infty)\to S_0\setminus \overline{S_0(t^2)}$$ such that $f\circ L_+^s(x)=L_+^s(x+1)$ for $x\in(-\infty,+\infty)$ and $\lim_{x\to +\infty}L_\pm^s(x)=\lim_{x\to -\infty}L_\pm^s(x)=0$.
		
\begin{claim}\label{claim:2}
There exist fixed points $a_t,b_t\in\mathbb C$ such that, for any $s\in(t^2,1)$, the open arc $\g_s=\psi_t\circ L_+^s:(-\infty,+\infty)\to V_t$ satisfies  $\lim_{x\to +\infty}\g_s(x)=a_t$ and $\lim_{x\to -\infty}\g_s(x)=b_t$. As a consequence, $B_0:=\psi_t(S_0\setminus \overline{S_0(t^2)})=\bigcup_{s\in(t^2,1)}\g_s$ is a disk in $U_{f_t}$.
\end{claim}
		
\begin{proof}
Since $\psi_t$ is univalent on $S_0 \setminus \overline{S_0(t^2)}$, the domain $B_0 \subset U_{f_t}$ is simply connected. Fix any $s \in (t^2, 1)$. By the definition of $\gamma_s$ and \eqref{eq:44}, we have $f_t \circ \gamma_s(x) = \gamma_s(x+1)$ for $x \in (-\infty, +\infty)$. Write $\gamma_s = \bigcup_{k=-\infty}^{+\infty} I_k$, where $I_k := \gamma_s[k, k+1]$. Then $f_t(I_k) = I_{k+1}$ by the parameterization of $\gamma_s$.

Since $f_t : B_0 \to B_0$ is conformal by \eqref{eq:44}, the hyperbolic lengths of $I_k$ for $k \in \mathbb{Z}$ are the same. Note also that $\gamma_s$ tends to $\partial B_0$ as $x \to \pm\infty$. It then follows that $\mathrm{diam}(I_k) \to 0$ as $k \to \infty$.

Set $K := \bigcap_{k \geq 0} \overline{\gamma_s(k, \infty)}$. Then $K$ is connected and contained in $\partial B_0$. Let $w \in K$. There exist a subsequence $\{k_m\}$ and points $w_{k_m} \in I_{k_m}$ such that $w_{k_m} \to w$ as $k_m \to \infty$. Fix any neighborhood $D$ of $w$ in $\mathbb{C}$. Since $\mathrm{diam}(I_k) \to 0$ as $k \to \infty$, the arc $I_{k_m}$ is contained in $D$ for all sufficiently large $k_m$. Note that $f_t$ maps one endpoint of $I_{k_m}$ to the other. Then $f_t(D) \cap D \neq \emptyset$. The arbitrariness of $D$ implies that $f_t(w) = w$. Since $K$ is connected but the fixed points of $f_t$ are discrete, it follows that $K$ is a singleton, denoted by $a_t$, and obviously $f_t(a_t) = a_t$.
			
Let $s' \in (t^2, 1)$ be any other number. It is known that $\lim_{x \to +\infty} \gamma_{s'}(x) = a_t'$. We will show $a_t = a_t'$. To see this, let $\delta_0 \subset B_0$ be an arc joining $\gamma_s(0)$ and $\gamma_{s'}(0)$. As before, it holds that $\mathrm{diam}(\delta_k) \to 0$ as $k \to \infty$, where $\delta_k := f_t^k(\delta_0)$ is an arc joining $\gamma_s(k)$ and $\gamma_{s'}(k)$. This implies $a_t = a_t'$, since $\gamma_s(k) \to a_t$ and $\gamma_{s'}(k) \to a_t'$.

By a similar argument as above, we can prove that $\gamma_s(x)$ converges to a fixed point $b_t$ as $x \to -\infty$ for any $s \in (t^2, 1)$. Then the claim is proved.
\end{proof}
		
Since $\tau_0(L_+(s))=L_-(t^2/s)$ (item (1) in Section \ref{sec:plumbing}), relation \eqref{eq:33} implies that $\psi_t(S_0\setminus \overline{S_0(t)})=B_0\setminus \g_t$. Note that $\beta := \partial U_0 \setminus S_0(t) \subset U$ is an arc satisfying $\beta(0) \in L_+(t)$, $\beta(1) \in L_-(t)$, and $\tau_0(\beta(0)) = \beta(1)$. Then $\psi_t(\beta(0)) = \psi_t(\beta(1)) \in \gamma_t$ by \eqref{eq:33}. Note also that $\psi_t$ is univalent on $U \setminus \overline{S_0(t)}$. Then $\alpha := \psi_t(\beta) \subset U_{f_t}$ is a Jordan curve that bounds a disk $W_0 \ni a_t$. Combining Claim \ref{claim:2}, we have
\[\psi_t(U_0 \setminus \overline{S_0(t^2)}) = W_0 \setminus \{a_t\} \quad \text{and} \quad \psi_t(U_0 \setminus \overline{S_0(t)}) = W_0 \setminus \overline{\gamma_t},\] see Figure \ref{fig:5}.
\begin{figure}[htbp]
	\centering
	\includegraphics[width=15cm]{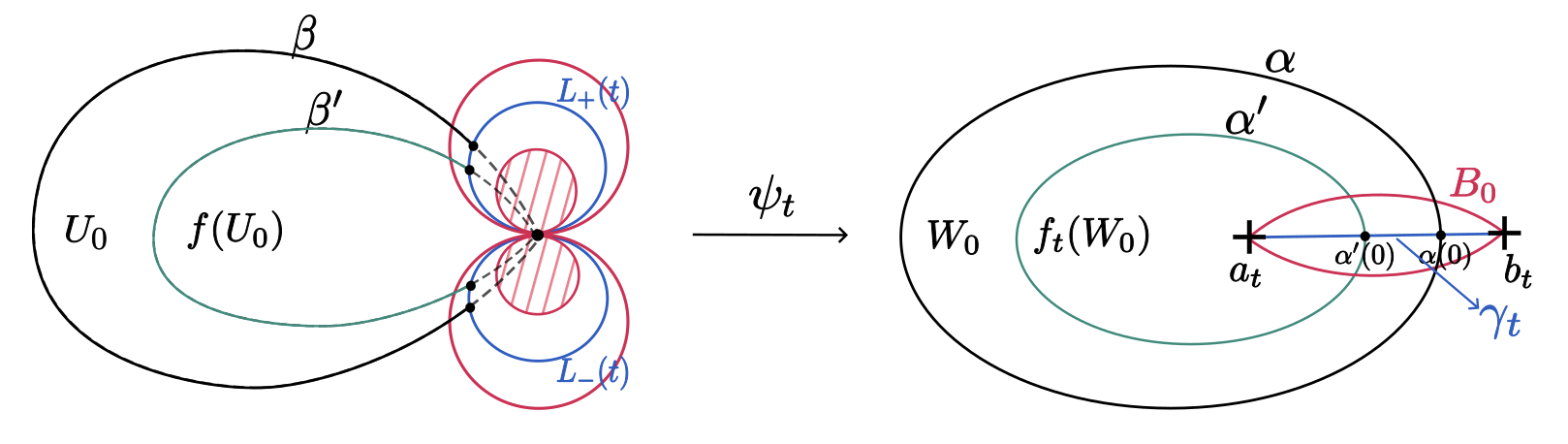}
	\caption{Image of $\psi_t$ on $U_0\setminus \overline{S_0(t^2)}$}
	\label{fig:5}
\end{figure}

Similarly, the arc $\beta':=\partial f(U_0)\setminus S_0(t)$ is mapped by $\psi_t$ to a Jordan curve $\alpha'\subset U_{f_t}$ disjoint from $\alpha$. Note that $\beta'(0)=f(\beta(0))$. It then follows from \eqref{eq:44} that $f_t(\alpha(0))=\alpha'(0)\in \g_t$. So $\alpha'(0)$, and thus the entire $\alpha'$, is contained in $W_0$, as $f_t\circ \g_t(x)=\g_t(x+1)$ for any $x\in(-\infty,+\infty)$. Using \eqref{eq:44} again, we conclude that $f_t(W_0)\Subset W_0$. It follows that $a_t$ is an attracting fixed point.

Therefore, $U_{f_t}$ is a completely invariant attracting Fatou domain of $f_t$.		

For any $k\geq0$, denote by $W_k$ the component of $f^{-k}_t(W_0)$ containing $a_t$. By the argument above, it follows that for any $k\geq0$,
\[\label{***} \psi_t\big(U_k\sm\overline{S_k(t^2)}\big)=W_k\setminus f_t^{-k}(a_t)\text{ and }\psi_t : U_k\sm\overline{S_k(t)}\to W_k\setminus f_t^{-k}(\overline{\g_t})\text{ is conformal}.\eqno(\ast\! \ast \! \ast) \]

To establish a bijection between $\mathcal{E}_f$ and $\mathcal{E}_{f_t}$, we define the depth-$n$ (attracting) puzzle $\mathcal{Q}_n$ of $(f_t, U_{f_t})$ to be the collection of the components of $\mathbb{C} \setminus W_{N+n}$ for each $n \geq 0$. In this case, each depth-$(n+1)$ puzzle piece is a closed disk and is contained in the interior of a depth-$n$ puzzle piece. Moreover, since $U_{f_t}$ is completely invariant, the map $f_t$ maps each depth-$(n+1)$ puzzle piece onto a depth-$n$ one as a branched covering.
		
By the mapping property of $\psi_t$ given in $(\ast\! \ast \! \ast)$, we can define a bijection $\xi_n:\mathcal P_n\to \mathcal Q_n$ between puzzles of depth $n$ for each $n\geq0$, such that
\[Q_n:=\xi_n(P_n)\ \text{ if }\ \psi_t(\partial P_n\setminus \overline{S_{N+n}(t^2)})=\partial Q_n.\]
This naturally induces a map $\xi_t: \mathcal E_f\to \mathcal E_{f_t}$ defined as
\[\xi_t(E)=\partial \big(\bigcap_{n\geq0}\xi_n\big(P_n(E)\big)\big)  \quad \text{for all }  E\in \mathcal E_f.\]
		
\begin{claim}\label{claim:correspond}
The map $\xi_t$ satisfies the properties  in Proposition \ref{model}.
\end{claim}

\begin{proof}
Since each $\xi_n$ is bijective, it follows directly that $\xi_t: \mathcal E_f\to \mathcal E_{f_t}$  is a bijection.
			
For each sufficiently large $n$, formula \eqref{eq:44} implies
$$f_t\Big( \psi_t\big(\partial P_{n+1}(E) \sm\overline{S_{N+n+1}(t)}\big)\Big)=\psi_t\Big(\partial P_{n}(\sigma_f(E)) \sm\overline{S_{N+n}(t)}\Big).$$
Hence $f_t\Big(\xi_{n+1} \big(P_{n+1}(E)\big)\Big)=\xi_n \Big(P_{n}\big(\sigma_f(E)\big)\Big)$ by definition of $\xi_n$. It implies that  $$f_t\big(\xi_t({E})\big)=f_t\Big(\partial\bigcap_{n\geq 0}\xi_{n+1}\big(P_{n+1}(E)\big)\Big)=\partial \bigcap_{n\geq 0}\xi_n\Big (P_{n}\big(\sigma_f(E)\big)\Big)=\xi_t\big(\sigma_f(E)\big).$$ Consequently, $\sigma_{f_t}\circ \xi_t=\xi_t\circ \sigma_f$ for each  $E\in{\cal E }_f$.\vskip 0.1cm
			
By the univalent property  of $\psi_t$, we obtain $\deg_E\sigma_f=\deg_{\xi_t(E)}\sigma_{f_t}$ for all  $E\in{\cal E }_f$.
\end{proof}

It remains to verify that $f_t$ is a simple attracting map.\vskip 0.1cm
		
With similar notations to those in Section \ref{sec:extend}, we denote by $\mathcal{E}_{f_t}^{\rm crit}$  the set of all critical components in  $ {\cal E }_{f_t}$, and set
\begin{center}
	\begin{math}
		\begin{aligned}
			{\cal E }_{f_t}^{*}=\bigcup \{\sigma_{f_t}^k(E)\, | &\, k\geq0,\, E\in{\cal E}_{f_t}^{\rm crit}  \text{  is preperiodic and its orbit } \\&\text{ contains critical periodic components}\}.
		\end{aligned}
	\end{math}
\end{center}		
From Claim \ref{claim:correspond}, we conclude that the cardinality of ${\cal E}_{f_t}^{\rm crit}$ (resp. $\mathcal{E}_{f_t}^*$) coincides with that of ${\cal E}_f^{\rm crit}$ (resp. $\mathcal{E}_f^*$); we denote these cardinalities by $l_1$ (resp. $l_2$).

\begin{claim}\label{claim:3}
	The filling of each element of $\mathcal E_{f_t}^*$ contains a unique critical or postcritical point.
\end{claim}
\begin{proof}
Recall  that ${\cal E}_f^{\rm crit}\bigcup {\cal E }_f^*=\big\{ E_f^1, E_f^2, \cdots, E_f^{l}\big\}$. We have assigned a preferred point $z^j\in \widehat{E}_f^j$ for each $j=1,\ldots,l$. Define an injection $\alpha_{n_k}: {\cal E}_f^{\rm crit}\bigcup {\cal E }_f^*\to \mathbb C$ for each $k\geq1$ by $\alpha_{n_k}(E_f^j):=\psi_{n_k}(z^j),j=1,\ldots,l$. By the construction of $f_n$  (see the graph \eqref{eq:22}), it holds that
\begin{equation}\label{eq:66}
	{\rm deg}(f_{n_k}|_{\alpha_{n_k}(E_f^j)})={\rm deg}_{E_f^j}\sigma_f,\forall\, j\in\{1,\ldots,l\}\ \text{ and }\ \alpha_{n_k}\circ \sigma_f=f_{n_k}\circ \alpha_{n_k}\text{ on }\mathcal{E}_f^*.
\end{equation}
By taking a subsequence, we may assume that $\alpha_{n_k} \to \alpha_t: \mathcal{E}_f^{\rm crit} \cup \mathcal{E}_f^* \to \mathbb{C}$ as $k \to \infty$. Then $\alpha_t$ is an injective map satisfying
\begin{equation}\label{eq:77}
	\alpha_t \circ \sigma_f(E) = f_t \circ \alpha_t(E) \quad \text{for all } E \in \mathcal{E}_f^*.
\end{equation}
Let $\alpha_t(\mathcal{E}_f^{\rm crit}) = \{w^1, w^2, \dots, w^{l_1}\}$. Then $w^i \neq w^j$ for $1 \leq i \neq j \leq l_1$. It is easy to check that each $w^i$ is a critical point of $f_t$ with multiplicity at least $(\deg_{E_f^i} \sigma_f - 1)$ for $1 \leq i \leq l_1$.

Let  $M$ denote the number of critical points of $f$ in $U$ (counted with multiplicity). According to the construction of $f_n$ and the degree property in \eqref{eq:66} , we have
\[\sum_{E\in\mathcal E_f^{\rm crit}} \Big({\rm deg}\big(f_{n_k}|_{\alpha_{n_k}(E)}\big)-1\Big)=2d-2-M.\]
The univalence of $\psi_t$ implies that ${f_t}$ has $M$ critical points in $U_{f_t}$. Thus the critical points of $f_t$ outside $U_{f_t}$ are precisely $w^{1}, w^{2},\dots, w^{l_1}$. Applying Lemma \ref{lem:finite} to $(f_t,U_{f_t})$, we can see that each $\widehat{E_t}$ with $E_t\in\mathcal E_{f_t}^{\rm crit}$ contains exactly one  critical point of $f_t$ outside $U_{f_t}$.

Set $Z_t^* := \alpha_t(\mathcal{E}_f^*)$. By the equation \eqref{eq:77}, we have $f_t(Z_t^*) \subset Z_t^*$. Since $U_{f_t}$ is completely invariant, for any $E_t \in \mathcal{E}_{f_t}^*$, it holds that $f_t(\widehat{E_t}) = \widehat{\sigma_t(E_t)}$. Combining these two facts and recalling that every $E_t \in \mathcal{E}_{f_t}^*$ is an element of the orbit of some critical component (by definition of $\mathcal{E}_{f_t}^*$), we conclude that $\widehat{E_t}$ contains at least one point in $Z_t^*$. Since $\alpha_t$ is injective, $\# Z_t^*=\# \mathcal{E}_{f_t}^*$. Thus $\widehat{E_t}$ contains exactly one point of $Z_t^*$, and therefore one critical or postcritical point of $f_t$.
\end{proof}

Since $U_{f_t}$ is completely invariant, by Lemma \ref{lem:component} (1), an element $E \in \mathcal{E}_{f_t}$ is a singleton if its orbit under $\sigma_{f_t}$ does not enter $\mathcal{E}_{f_t}^*$. Because the orbit of each point in $Z_t^*$ contains a periodic critical point, we apply Claim \ref{claim:3} and Lemma \ref{lem:component} (2) to conclude that each $E_t \in \mathcal{E}_{f_t}^*$ is a quasi-circle. Thus $f_t$ is a simple attracting map.
\end{proof}
	
\begin{proposition}\label{pro:parabolic}
There exists a decreasing sequence $\{t_j\}_{j\geq1}\subset (0,1)$  converging to $0$, such that the simple attracting maps $\{f_{t_j}\}_{j\geq1}$ obtained in Proposition \ref{model} converge to a simple parabolic map $g$, and  $(f,U)$ is conformally conjugate to $(g,U_g)$.
\end{proposition}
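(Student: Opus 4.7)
The strategy is to run the construction of Proposition \ref{model} for a sequence $t_j \to 0^+$ and extract limits of both the simple attracting maps $f_{t_j}$ and the univalent conjugacies $\psi_{t_j}$. Keeping throughout the three-point normalization used in the proof of Proposition \ref{model}, the family $\{\psi_t\}_{t\in(0,1)}$ is normal on compact subsets of $U$: for any compact $K\subset U$, the set $K\cap \bigcup_{k\geq 0}\overline{S_k(t^2)}$ is empty once $t$ is small enough, since $\bigcup_{k\geq 0}\overline{S_k(t^2)}$ shrinks into an arbitrarily small neighborhood of the countable set $\bigcup_{k\geq 0}f^{-k}(0)\cap \partial U$ as $t\to 0$. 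Correspondingly, $\{f_t\}_{t\in(0,1)}$ lies in a precompact subset of the space of degree-$d$ rational maps fixing the normalization point.

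A diagonal extraction along a decreasing sequence $t_j\to 0$ then yields subsequential limits $\psi_{t_j}\to\psi$ locally uniformly on $U$ and $f_{t_j}\to g$ uniformly on $\widehat{\mathbb C}$. Since each $\psi_{t_j}$ is univalent on the exhausting domains $U_{N+k}\setminus \bigcup_{i=0}^{N+k}\overline{S_i(t_j)}$, Hurwitz's theorem forces $\psi$ to be univalent on $U$. Passing to the limit in $\psi_{t_j}\circ f=f_{t_j}\circ\psi_{t_j}$ yields
\[ \psi\circ f=g\circ \psi \quad \text{on } U, \]
so $V:=\psi(U)$ is $g$-invariant with $\deg(g|_V)=d$; Lemma \ref{converge} then gives $\deg g=d$ and convergence on all of $\widehat{\mathbb C}$, so $V$ is a completely invariant Fatou domain $U_g$ of $g$, and $\psi$ furnishes the desired conformal conjugacy $(f,U)\simeq(g,U_g)$.

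Next I would identify the type of $U_g$. By Claim \ref{claim:2}, the attracting fixed points $a_{t_j}, b_{t_j}$ of $f_{t_j}$ are the endpoints at $\pm\infty$ of the arcs $\psi_{t_j}(L_+^s)$ and $\psi_{t_j}(L_-^s)$. Since these arcs shrink toward $0$ as $t_j\to 0$ while $\psi_{t_j}\to\psi$ locally uniformly on $U$, any choice of points $w_{t_j}\in L_+(s_j)\cup L_-(s_j)$ with $s_j\to 0$ has images $\psi_{t_j}(w_{t_j})$ tending to a common point $p_*\in\partial U_g$, which is a fixed point of $g$. Because $f^n|_U\to 0$ locally uniformly and $\psi$ is a conjugacy, $g^n|_{U_g}$ converges locally uniformly to $p_*$. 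An interior attracting fixed point of $g$ in $U_g$ is excluded by conjugacy (as $f|_U$ has none), so $p_*$ must be a parabolic fixed point of $g$ with multiplier $1$, and $U_g$ is its immediate basin.

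Finally, for the remaining simple-parabolic axioms, the bijection $\xi_{t_j}\colon \mathcal{E}_f\to\mathcal{E}_{f_{t_j}}$ of Proposition \ref{model} can be passed to the limit via $\psi$ (using the puzzle correspondence analogous to that proof) to give a bijection $\mathcal{E}_f\to\mathcal{E}_g$ conjugating $\sigma_f$ to $\sigma_g$ and preserving degrees. A verbatim repetition of Claim \ref{claim:3} for $(g,U_g)$, applied to the finite set $\mathcal{E}_g^{\rm crit}\cup\mathcal{E}_g^*$, shows that the filling of each of its elements contains exactly one critical or postcritical point of $g$; Lemma \ref{lem:component}(1) identifies all other components of $\mathcal{E}_g$ as singletons, while Lemma \ref{lem:component}(2) shows each non-singleton element is a Jordan curve bounding a superattracting Fatou domain, which is a quasi-circle precisely when its forward orbit avoids $p_*$. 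The most delicate step is confirming that $p_*$ is genuinely parabolic rather than attracting or indifferent of another type; this rests on the observation that $a_{t_j}$ and $b_{t_j}$ must approach the same boundary point of $U_g$, which is ultimately controlled by the shrinking of the plumbing region $L_\pm(s)$ to $\{0\}$ as $t_j\to 0$.
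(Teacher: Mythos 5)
Your overall strategy matches the paper's proof: extract limits of $\psi_{t_j}$ and $f_{t_j}$ via normality and diagonal extraction, pass to the limit in the conjugacy relation $\psi_{t_j}\circ f = f_{t_j}\circ\psi_{t_j}$ using Lemma \ref{converge}, and then verify that the resulting $(g,U_g)$ is a simple parabolic pair. The treatment of the final ``simple'' verification, by re-running the argument of Claim \ref{claim:3} with $\xi$ in place of $\xi_t$, is also the same.

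There is, however, a genuine gap in the middle step where you write ``so $V:=\psi(U)$ is a completely invariant Fatou domain $U_g$ of $g$.'' Knowing that $V$ is open, $g$-invariant, and $\deg(g|_V)=d$ only tells you that $V$ is contained in some Fatou domain $\widetilde U_g$; it does \emph{not} give $V=\widetilde U_g$. Your subsequent argument with the fixed points $a_{t_j}$, $b_{t_j}$ does effectively rule out $\widetilde U_g$ being a rotation domain (orbits in $U_g$ converge to a boundary fixed point $p_*$, so there can be no recurrence), but it does not exclude the scenario where $\widetilde U_g$ is an attracting basin strictly containing $V$ and $p_*$ is the attracting fixed point sitting in $\widetilde U_g\setminus V$, on $\partial V$. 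Your claim ``an interior attracting fixed point of $g$ in $U_g$ is excluded by conjugacy'' only addresses fixed points \emph{inside} $V$, not inside $\widetilde U_g$, so the attracting case is not closed off. The paper handles exactly this point with a separate argument: $U_g/\langle g\rangle$ is conformally an infinite cylinder (being isomorphic to $U/\langle f\rangle$), whereas if $\widetilde U_g$ were attracting, $\widetilde U_g/\langle g\rangle$ would be a torus, and if $V\subsetneq\widetilde U_g$, the cylinder $V/\langle g\rangle$ would embed as a proper subdomain, which is impossible. You would need to supply this quotient-surface argument (or an equivalent one) to complete your proof.

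A secondary, more minor point: your statement that for each compact $K\subset U$ the set $K\cap\bigcup_{k\ge0}\overline{S_k(t^2)}$ is empty for $t$ small enough is not quite right as formulated, since the sets $\overline{S_k(t^2)}$ always touch $\bigcup_k f^{-k}(0)$ and their union is an infinite collection; normality is better obtained, as the paper does, by observing that for each fixed $s\in(0,1)$ the family $\{\psi_{t}\}_{t<s}$ is well-defined and normal on the fixed domain $U\setminus\bigcup_{k\ge0}\overline{S_k(s)}$, and then diagonalizing over $s$.
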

\begin{proof}
In the proof of  Proposition \ref{model},  we constructed a univalent map $\psi_{t}: U\sm\bigcup_{k\geq 0} \overline{S_k(t)}\to \mathbb C$ and a simple attracting map $f_{t}$ of degree $d$ for any $t\in(0,1)$, such that
\begin{equation}\label{eq:88}
f_{t}\circ\psi_{t}(z)=\psi_{t}\circ f(z)  \quad \text{for all }  z\in U\setminus\bigcup_{k\geq0}\overline{S_{k}(t)},
\end{equation}
and $\psi_t$ fixes $\infty$ and two other points near $\infty$. Moreover, by  Claim \ref{claim:correspond}, there exists a bijection $\xi_t:\mathcal E_f\to \mathcal E_{f_t}$ such that $\xi_t\circ \sigma_f=\sigma_{f_t}\circ\xi_t$ on $\mathcal E_f$ and ${\rm deg}_E\sigma_f={\rm deg}_{\xi_t(E)}\sigma_{f_t}$ for any $E\in\mathcal E_f$. 

Notice that $\{\psi_{s}\}_{s\in(0,1)}$ forms a normal family on $U\sm\bigcup_{k\geq 0} \overline{S_k(s)}$ for any $s\in(0,1)$. By taking a subsequence, it follows that $\{\psi_{t_j}\}_{j\geq1}$ converges locally and uniformly  to a univalent map $\psi:U\to\widehat{\mathbb C}$. According to Lemma \ref{converge} and the equation \eqref{eq:88}, $f_{t_j}$ converges uniformly to a rational map $g$ of degree $d$ as $j\to\infty$, such that $\psi\circ f=g\circ \psi$ in $U$. As a result, $(f,U)$ and $(g,U_g)$ are conformally conjugate with  $U_g:=\psi(U)$. \vskip 0.1cm

As $g$ is a rational map of degree $d$, $g^{-1}(U_g) = U_g$. $U_g$ is contained in a Fatou domain $\widetilde{U}_g$ of $g$.  If $\widetilde{U}_g$ is a Siegel disk or Herman ring, then $g|_{\widetilde{U}_g}$ is holomorphically conjugate to an irrational rotation of degree 1, while $\deg(g|_{\widetilde{U}_g}) = d \geq 2$ since $U_g \subset \widetilde{U}_g$ is completely invariant. If $\widetilde{U}_g$ is an attracting domain, then $\widetilde{U}_g/\left\langle g\right\rangle $ is a torus.  But $U_g/\left\langle g\right\rangle $ is conformally isomorphic to $U/\left\langle f\right\rangle$ which is an infinite cylinder. So $\widetilde{U}_g$ cannot be attracting. If $U_g\subsetneq \widetilde{U}_g$, then $U_g/\left\langle g\right\rangle $ is conformally isomorphic to a subset of $\mathbb{C}^{*}$, leading to a contradiction.
Thus $U_g$ is a completely invariant parabolic Fatou domain of $g$. 

It remains to check that $g$ is a simple parabolic map.	\vskip 0.1cm
		
Recall that $ {\cal E }_g$ denotes the collection of all components of $\partial U_g$. Since $\psi$ is a conformal conjugation from $f:U\to U$ onto $g:U_g\to U_g$, we set $\Omega_k:=\psi(U_k)$ for each $k\geq0$, and define the depth-$n$ puzzle $\mathcal P_n(g)$ of $g$ as the collection of components of $\mathbb C\setminus \Omega_{N+n}$ for each $n\geq0$. Clearly, $\psi$ induces a one-to-one correspondence between $\mathcal P_n(f)$ and $\mathcal P_n(g)$. With a similar argument to Claim \ref{claim:correspond}, we obtain a bijection $\xi:\mathcal E_f\to \mathcal E_g$ such that $\xi\circ \sigma_f=\sigma_g\circ \xi$ on $\mathcal E_f$ and ${\rm deg}_E\sigma_f={\rm deg}_{\xi(E)}\sigma_{g}$ for all $E\in\mathcal E_f$. \vskip 0.1cm
		
As before, define by $\mathcal E_g^{\rm crit}$ the collection of critical elements of $\mathcal E_g$ and
\begin{center}
	\begin{math}
		\begin{aligned}
			{\cal E }_{g}^{*}=\bigcup \{\sigma_{g}^k(E)\, | &\, k\geq0,\, E\in{\cal E}_{g}^{\rm crit}  \text{  is preperiodic and its orbit } \\&\text{ contains critical periodic components}\}.
		\end{aligned}
	\end{math}
\end{center}		
		
With a similar argument to that in the proof of Claim \ref{claim:3} (by replacing $f_{n_k}, \psi_{n_k},f_t$ and $\xi_t$ there, with $f_{t_j}, \psi_{t_j}, g$ and $\xi$, respectively), we can show that for any $E\in\mathcal E_g^*$, it does not contain critical points of $g$ and int$\widehat{E}$ contains exactly one critical or postcritical point of $g$. Thus $g$ is a simple parabolic map by Lemma \ref{lem:component}.
\end{proof}
	
\begin{proof}[Proof of Theorem \ref{thm:1.1}]
It follows directly from Propositions \ref{model} and \ref{pro:parabolic}.
\end{proof}
	
\section{Perturbation of parabolic maps with Cantor Julia sets}
	
In this section, we will prove Theorems \ref{thm:middle} and \ref{thm:1.2}.
	
Let $f$ be a simple parabolic map of degree $d$. Fix any $t\in(0,1)$. Let $f_t$ be the simple attracting map which is obtained by Proposition \ref{model} and based on $(f,U_f)$. To prove Theorem \ref{thm:middle}, we define new puzzles for $f$ and $f_t$ slightly different from those  given in Section \ref{sec:puzzle} and Section 4, respectively.
	
As in Section \ref{sec:puzzle}, let $U_0\subset U_f$ be an (regular) attracting petal of the parabolic fixed point for $f$, and $S_0$ be the union of sepals attached to the parabolic point. Then $$U^*_0:=U_0\cup S_0$$ is still a disk. For every $n\geq0$, denote by $U^*_{n}$ the component of $f^{-n}(U^*_0)$ containing $U^*_0$. There exists an integer $N\geq 1$ such that  $f^{-i}(U^*_{N})$ has only one component for any $i\geq 0$.
	
For each $n \geq 0$, we define the new depth-$n$ puzzle $\mathcal{P}_n^*$ for $f$ as the collection of all components of $\mathbb{C} \setminus \overline{f^{-n}(U_N^*)}$. The new puzzle $\mathcal{P}_n^*$ differs only slightly from $\mathcal{P}_n$: it satisfies properties (P1)-(c) and (d) of Lemma \ref{puzzle}, though its remaining characteristics diverge from $\mathcal{P}_n$. Specifically, all puzzle pieces in $\mathcal{P}_n^*$ are disks, and for any two puzzle pieces $P_n^*$ and $P_{n+1}^*$,  if $\partial P_{n}^*\cap\partial P_{n+1}^*\neq\emptyset$, then $\partial P_n^* \cap \partial P_{n+1}^* \subset f^{-(N+n)}(\partial S_0) \cap \overline{P_{n+1}^*}$. Moreover, for any $E \in \mathcal{E}_f$ and $n \geq 0$, there exists a unique $P_n^*(E) \in \mathcal{P}_n^*$ such that $E \subset \overline{P_n^*(E)}$, and it holds that $\bigcap_{n \geq 0} \overline{P_n^*(E)} = \widehat{E}$.\vskip 0.1cm
	
In the proof of Proposition \ref{model}, we obtain a holomorphic map $\psi_t$ on $U_f \setminus \bigcup_{k \geq 0} \overline{S_k(t^2)}$ that is univalent on both $U \setminus \overline{S_k(t)}$ and each component of $S_k \setminus \overline{S_k(t^2)}$ for all $k \geq 0$, satisfying
\[\psi_t \circ f(z) = f_t \circ \psi_t(z) \quad \text{for all } z \in U_f \setminus \bigcup_{k \geq 0} \overline{S_k(t^2)}.\]
By Claim \ref{claim:2}, $B_0=\psi_t(S_0\setminus \overline{S_0(t^2)})\subset U_{f_t}$ is a disk, such that $\partial B_0\cap \partial U_{f_t}$ is a repelling fixed point $b_t$,  and $f_t:B_0\to B_0$ is conformal.
	
Recall that $W_0=\psi_t(U_0\sm\overline{S_0(t^2)})\sm\{a_t\}$, where $a_t$ is the attracting fixed point in $U_{f_t}$ . Then $$W_0^*:=W_0\bigcup B_0\bigcup \{a_t\}$$ is a disk and $f_t(W_0^*)\subset W_0^*$. For every $n\geq0$, denote by $W^*_{n}$ the component of $f^{-n}_t(W^*_0)$ containing $a_t$. For each $n\geq 0$, we define the new depth-$n$ puzzle $\mathcal{Q}^*_n$ for $f_t$ to be the collection of all components of $\omC\sm \overline{f_t^{-n}({W^*_{N}})}$. Then all puzzle pieces are disks. 
	
By the construction above, we can see that $\psi_{t}:\partial U_n^*\to\partial W_n^*$ is a homeomorphism for every $n\geq0$, see Figure \ref{fig:6}. Then $\psi_t$ induces a bijection $\eta_n:\mathcal P_n^*\to \mathcal Q_n^*$ such that
	\[\eta_n(P_n^*)=Q_n^*\ \text{ if }\ \psi_t(\partial P_n^*)=\partial Q_n^*\]
As a consequence, we have a map $\eta: \mathcal{E}_f \to \mathcal{E}_{f_t}$ defined by
	\[{\eta(E)} = \partial \bigcap_{n \geq 0}\overline{\eta_n(P^*_n(E))}  \quad \text{for all } E\in \mathcal E_f.\]
Using a method similar to that in the proof of Claim \ref{claim:correspond}, we can show that $\eta: \mathcal{E}_f \to \mathcal{E}_{f_t}$ is a bijection satisfying
\begin{equation}\label{eq:99}
	\eta \circ \sigma_f(E) = \sigma_{f_t} \circ \eta(E) \quad \text{and} \quad \deg_E \sigma_f = \deg_{\eta(E)} \sigma_{f_t}
	\quad \text{for all } E \in \mathcal{E}_f.
\end{equation}
\begin{figure}[htbp]
	\centering
	\includegraphics[width=13cm]{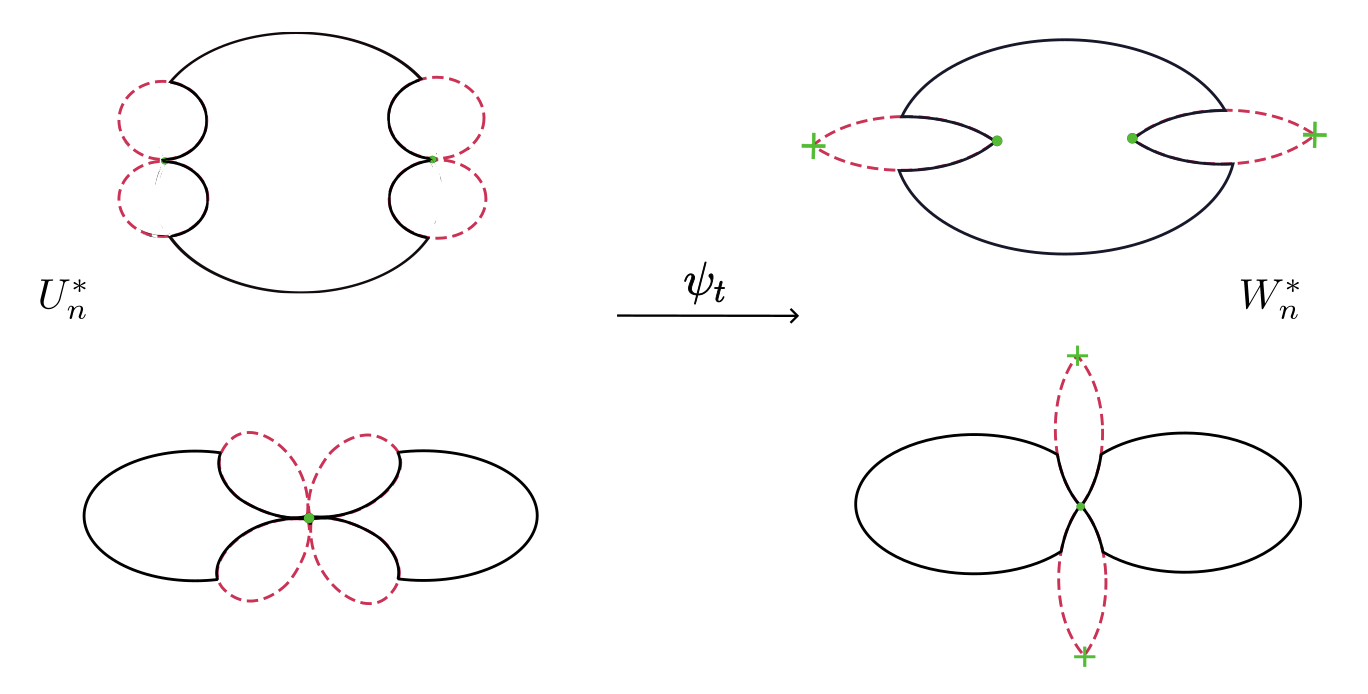}
	\caption{The boundary correspondence between $U_n^*$ and $W_n^*$}
	\label{fig:6}
\end{figure}
		
\begin{proof}[Completion of the proof of Theorem \ref{thm:middle}]
For any $n\geq1$ and $E\in\mathcal E_f$, we will define  puzzles $\mathcal X_n(E)$ and $\mathcal Y_n(E)$ of depth $n$ for $(f,E)$ and $(f_t,\eta(E))$, respectively.
		
If $E\in\mathcal E_f$ is a singleton, define $\mathcal X_n(E)=\big\{P_n^*(E)\big\}$ and $\mathcal Y_n(E)=\big\{Q_n^*(E)\big\}$ for each $n\geq1$.
		
Now suppose that $E\in\mathcal E_f$ is not a singleton. Then $E$ is $\sigma_f$-preperiodic  by Lemma \ref{lem:component} (1). We first consider the periodic case. Without loss of generality, assume that $f(E)=E$. Let $x^*$ be the unique parabolic fixed point of $f$ and $d_0:=\deg(f|_{\widehat{E}})$.
		
Denote by $\Omega_E$ the superattracting Fatou domain bounded by $E$, and by $\Omega_E(s)$ the set of points in $\Omega_E$ with potential (under Bottcher coordinate) less than $s$ ($s>0$). Similarly, we can define $\Omega_{\eta(E)}$ and $\Omega_{\eta(E)}(s)$. Using the Bottcher coordinates, we obtain a conformal map $\chi_E:\Omega_E\to \Omega_{\eta(E)}$ with $\chi_{f(E)}\circ f=f_t\circ\chi_E $. As $E$ and $\eta(E)$ are both Jordan curves, the map $\chi_E$ extends to a homeomorphism $\chi_E:\overline{\Omega_E}\to \overline{\Omega_{\eta(E)}}$. There are finitely many choices for these $\chi_E$. We will specify one by the following claim.
		
\begin{claim}\label{claim:6}
Suppose that $E$ avoids the unique parabolic point of $f$, and let $y\in E$ be a fixed point of $f$. Then there exists an open arc $\alpha_0\subset U_f\setminus \bigcup_{k\geq0}\overline{S_k}$ such that $\alpha_0$ joins a boundary point of $P_0^*(E)$ to $y$ and $\alpha_0\subset f(\alpha_0)$.
\end{claim}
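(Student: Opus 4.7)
The plan is to realize $\alpha_0$ as part of an $f$-invariant outgoing ray at the repelling fixed point $y$, extended by forward iteration of a fundamental segment and truncated at the first crossing of $\partial P_0^*(E)$.

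First, since $E$ avoids the parabolic fixed point $x^*$ and $y\in E$ satisfies $f(y)=y\neq x^*$, the point $y$ is not a preimage of $x^*$ under any iterate. Noting that $\overline{S_k}\cap J(f)$ consists of preimages of $x^*$, we conclude $y\notin\overline{S_k}$ for every $k\geq 0$. Because $y\in J(f)$ is a fixed point different from the unique parabolic one, $y$ is repelling with $\lambda:=f'(y)$ satisfying $|\lambda|>1$. Linearize $f$ on a neighborhood $V$ of $y$ by choosing a conformal $\phi:V\to\phi(V)$ with $\phi(y)=0$ and $\phi\circ f\circ\phi^{-1}(w)=\lambda w$. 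Since the orbit of $E$ equals $E$ and avoids $x^*$, the definition of simple parabolic map gives that $E$ is a quasi-circle; thus $\phi(V\cap E)$ is a quasi-arc through $0$ and $\phi(V\cap U_f)$ lies on one side of it.

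Next, in the linearized coordinate I would select a curve $\beta_0'\subset\phi(V\cap U_f)$ emanating from $0$ and invariant under $w\mapsto\lambda w$: a radial segment when $\lambda$ is positive real, and in general a logarithmic spiral of slope $\mu$ satisfying $\mu\log|\lambda|\equiv\arg\lambda\pmod{2\pi}$. Its pullback $\beta_0:=\phi^{-1}(\beta_0')$ is an arc in $V\cap U_f$ with $y\in\overline{\beta_0}$ and $f(\beta_0)\supset\beta_0$. The essential step is to choose $\beta_0'$ so that $\beta_0\cap\bigcup_{k\geq 0}\overline{S_k}=\emptyset$. Although sepals may accumulate at $y$, only those arising from the contracting inverse branch of $f^{-1}$ at $y$ do so; in the linearized coordinate they appear as disks lined up along a specific direction from $0$ with geometrically shrinking diameters. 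Within the one-parameter family of invariant curves sweeping $\phi(V\cap U_f)$, a measure-zero subfamily meets this accumulation, so one may pick $\beta_0'$ missing all $\phi(\overline{S_k}\cap V)$.

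Setting $\alpha_0^{full}:=\bigcup_{n\geq 0}f^n(\beta_0)$, the nesting $\beta_0\subset f(\beta_0)\subset\cdots$ gives an arc with $y$ as a limit endpoint and $f(\alpha_0^{full})=\alpha_0^{full}$. The identity $f^{-n}(\overline{S_k})=\overline{S_{n+k}}$ as sets converts the single condition $\beta_0\cap\bigcup_{m\geq 0}\overline{S_m}=\emptyset$ into $\alpha_0^{full}\cap\bigcup_{k\geq 0}\overline{S_k}=\emptyset$. Since every orbit in $U_f$ eventually enters $U_0\subset U_N^*$, the arc $\alpha_0^{full}$ crosses $\partial P_0^*(E)$; let $p_0$ be the first such crossing. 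The open sub-arc $\alpha_0$ of $\alpha_0^{full}$ between $y$ and $p_0$ (endpoints removed) lies in $U_f\setminus\bigcup_{k\geq 0}\overline{S_k}$ and joins a boundary point of $P_0^*(E)$ to $y$. Parametrizing $\alpha_0^{full}$ by $r=|\phi(\cdot)|$ near $y$ so that $f$ acts as scaling by $|\lambda|$, the inclusion $\alpha_0\subset f(\alpha_0)$ is immediate.

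The hard part is the sepal-avoidance step, as the $\overline{S_k}$ may accumulate at $y$ in a complicated way; the argument above relies on the fact that only the contracting branch at $y$ contributes asymptotically. A cleaner alternative would be to work through the attracting Fatou coordinate of $U_0$: choose in the attracting cylinder $U_0/\langle f\rangle$ a curve disjoint from the projections $\pi(S_\pm)$, lift to a fundamental segment in $U_0$, and pull back by the contracting inverse branch of $f^{-1}$ at $y$ to obtain an invariant ray landing at $y$ which avoids $\bigcup_{k\geq 0}\overline{S_k}$ by construction.
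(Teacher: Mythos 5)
Your construction runs in the opposite direction from the paper's: you build an $f$-invariant ray at the repelling fixed point $y$ in linearizing coordinates and push it forward, truncating at $\partial P_0^*(E)$, whereas the paper picks a compact fundamental arc $\delta_0$ joining a point $w_0\in\partial P_0^*(E)\cap U_y$ with $w_0\notin f^{-N}(x^*)$ to a preimage $w_k\in f^{-k}(w_0)\cap U_y$ on a deeper puzzle boundary, perturbs $\delta_0$ rel $P(f)$ to lie in $\bigl(U_f\setminus\bigcup_{k\geq0}\overline{S_k}\bigr)\cap U_y$, and sets $\alpha_0=\bigl(\bigcup_{n\geq0}f^{-n}(\overline{\delta_0})\bigr)\setminus\{w_0\}$, using the Shrinking Lemma to obtain the landing at $y$. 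The direction matters: the paper only has to avoid $\bigcup_k\overline{S_k}$ on one compact fundamental segment, after which $f^{-1}(\overline{S_k})=\overline{S_{k+1}}$ propagates the avoidance along the entire pullback automatically; you instead must choose a single curve in an arbitrarily small neighbourhood of $y$ that dodges the whole accumulation of sepals there.

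The argument you give for that dodge is the gap. Preimages of $x^*$ accumulate at $y$ along all of the quasi-arc $E\cap U_y$, not along one asymptotic direction, and sepals attached to other Julia components accumulating on $E$ near $y$ may also contribute, so ``disks lined up along a specific direction from $0$'' is not an accurate description. What is true is that each sepal close enough to $y$ is an iterated pullback, under the contracting inverse branch at $y$, of a sepal meeting a fixed fundamental annulus, so the configuration is self-similar under $w\mapsto\lambda w$; but that only recasts the problem as showing that the projected sepals in the quotient torus $(U_y\setminus\{y\})/\langle f\rangle$ leave room in the image of $U_f\cap U_y$ for an invariant curve of the right homotopy class. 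Your ``measure-zero subfamily'' assertion is precisely what needs to be proved, and nothing in the proposal establishes it. Your closing alternative -- take a fundamental segment in the attracting cylinder disjoint from $\pi(S_\pm)$ and pull back by the contracting branch at $y$ -- is essentially the paper's route and does sidestep the difficulty, because a compact fundamental segment makes the sepal-avoidance a finite, local matter.
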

\begin{proof}
We know that $E$ is fixed and $y$ is a repelling fixed point. Let $U_y$ denote the linearization domain of $y$. There exists an integer $n_0 \geq 0$ sufficiently large such that $P^*_{n_0}(E) \cap U_y \neq \emptyset$ and $P^*_{n_0}(E) \sm \overline{P^*_{n_0+k}(E)}$ is an annulus for an integer $k\geq 1$. 

Choose a point $w_0 \in U_y \cap \partial P_0^*(E)$ such that $w_0 \notin f^{-N}(x^*)$. There exists a point $w_k \in U_y \cap \partial P_{n_0+k}^*(E)$ with $w_k \in f^{-k}(w_0)$. Consider an open arc $\delta_0$ connecting $w_0$ and $w_k$ such that $\delta_0 \cap P(f) = \emptyset$. Since each non-singleton component of $\mathbb{C} \setminus U_f$ contains at most one postcritical point of $f$, and $\big( \overline{P_0^*(E)} \cap U_f \big) \cap P(f) = \emptyset$, we can select $\widetilde{\delta}_0$ in the homotopy class of $\delta_0$ rel $P(f)$ (connecting $w_0$ and $w_k$) such that $\widetilde{\delta}_0 \subset U_f$. We may thus assume $\delta_0 \subset U_f \cap U_y$. Since $\bigcup_{k \geq 0} \overline{S_k} \cap P(f) = \emptyset$, we can further choose $\delta_0$ to satisfy $\delta_0 \cap \bigcup_{k \geq 0} \overline{S_k} = \emptyset$. It follows that $\delta_0 \subset \big( U_f \setminus \bigcup_{k \geq 0} \overline{S_k} \big) \cap U_y.$

The set $\bigcup_{n \geq 0} f^{-n}(\overline{\delta_0})$ is a fixed ray in $\left( U_f \setminus \bigcup_{k \geq 0} \overline{S_k} \right) \cap U_y$. Let
\[\alpha_0 := \big( \bigcup_{n \geq 0} f^{-n}(\overline{\delta_0}) \big) \setminus \{w_0\}.\]
Then $\alpha_0$ is an open arc satisfying $\alpha_0 \subset U_f \setminus \bigcup_{k \geq 0} \overline{S_k}$ and $\alpha_0 \subset f(\alpha_0)$.\vskip 0.1cm

Take a disk $D \subset P_0^*(E) \cap U_f$ that contains $\delta_0$. Since all non-singleton Julia components of $f$ contain no critical points, we have $\big( \overline{P_0^*(E) \cap U_f} \big) \cap P(f) = \emptyset$, which implies $\overline{D} \cap P(f) = \emptyset$. Let $C_n$ denote the maximum diameter of the components of $f^{-n}(D)$. By the Shrinking Lemma in \cite{L}, $C_n \to 0$ as $n \to \infty$. Hence, $\alpha_0$ connects $w_0$ to $y$.
\end{proof}

Now we specify the map $\chi_E:\Omega_E\to \Omega_{\eta(E)}$. If $E$ contains the parabolic fixed point $x^*$, then $x^* \in \partial P_0^*(E)$. Since $\psi_t(\partial P_0^*(E)) = \partial Q_0^*(\eta(E))$, it follows that $\psi_t(x^*) \in \eta(E) \cap \partial Q_0^*(\eta(E))$, and $\psi_t(x^*)$ is a repelling fixed point of $f_t$. We therefore choose $\chi_E$ such that $\chi_E(x^*) = \psi_t(x^*)$ (see Figure \ref{fig:7}).
\begin{figure}[htbp]
\centering
\includegraphics[width=16cm]{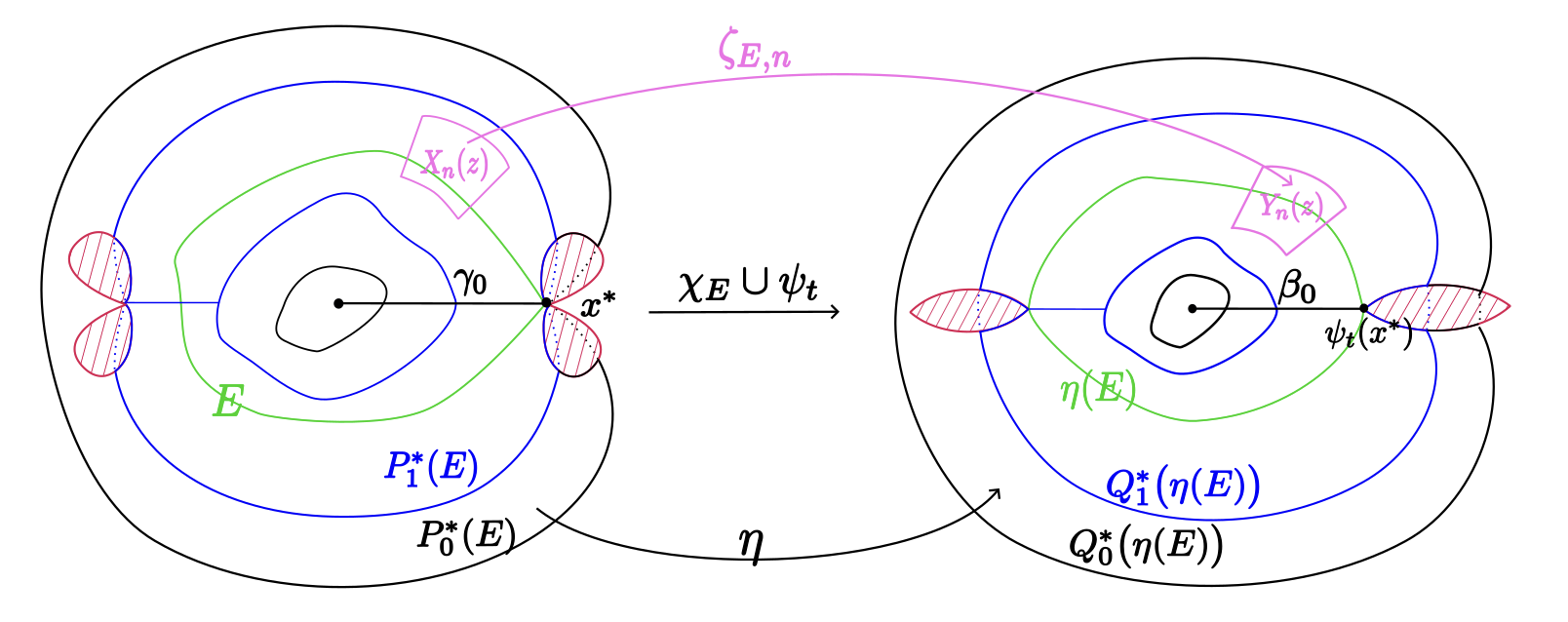}
\caption{}
\label{fig:7}
\end{figure}

If $E$ does not contain $x^*$, then by Claim \ref{claim:6}, there exists an open arc $\alpha_0 \subset U_f \setminus \bigcup_{k \geq 0} \overline{S_k}$ that lands on a fixed point $y = y(E) \in E$ and satisfies $\alpha_0 \subset f(\alpha_0)$. Recall that $\psi_t$ is univalent on $U_f \setminus \bigcup_{k \geq 0} \overline{S_k}$ and satisfies $\psi_t \circ f = f_t \circ \psi_t$. Define $\widetilde{\alpha}_0 = \psi_t(\alpha_0)$; this is an open arc in $U_{f_t} \cap Q_0^*(\eta(E))$ with $f_t(\widetilde{\alpha}_0) \supset \widetilde{\alpha}_0$. Using the same argument as in Claim \ref{claim:2}, we can show that $\widetilde{\alpha}_0$ lands at a fixed point $\widetilde{y} \in \eta(E)$ of $f_t$. In this case, we choose $\chi_E$ such that $\chi_E(y) = \widetilde{y}$ (see Figure \ref{fig:8}).

According to B\"{o}ttcher's Theorem \cite{M} to $\Omega_{E}$, there exists a conformal map $\varphi_1: \Omega_{E} \to \mathbb{D}$ such that $\varphi_1(a) =0$, and $\varphi_1\big(f(z)\big) = \big(\varphi_1(z)\big)^{d_0}$, where $a$ is the superattracting fixed point. Lifting the ray starting from $0$ and landing on $\partial\mathbb{D}$ via $\varphi_1$ yields a ray in $\Omega_{E}$, called an \textbf{internal ray}. By means of internal rays, we can construct partitions of puzzles $\mathcal{P}^*_n$ and $\mathcal{Q}^*_n$, denoted $\mathcal X_n(E)$ and $\mathcal Y_n(E)$, respectively. \vskip 0.1cm		

Define a ray $\g_0\subset P_0^*(E)$ as follows: if $x^*\in E$, define $\g_0$ to be the internal ray in $\Omega_E$ landing at $x^*$; otherwise, let $\g_0$ be the union of $\alpha_0$ and the closure of the internal ray in $\Omega_E$ landing at $y(E)$. Then $f(\g_0)\supset \g_0$ and $P_0^*(E)\setminus \g_0$ is a simply-connected domain containing no critical points and critical values of $f$.

For saving the notations, we define the map $\chi_E\cup \psi_t: \overline{\Omega_E}\bigcup (U_f\sm \bigcup_{k\geq0} \overline{S_k})\to\mathbb C$  as
		\[\big(\chi_E\cup \psi_t\big)(z)=\left\{
		\begin{array}{ll}
			\chi_E(z), & \hbox{if $z\in \overline{\Omega_E}$;} \\
			\psi_t(z), & \hbox{if $z\in U_f\sm \bigcup_{k\geq0} \overline{S_k}$.}
		\end{array}
		\right.
		\]
By the choice of $\chi_E$,  we obtain that $\beta_0:=\big(\chi_E\cup \psi_t\big)(\g_0)$ is a ray in $Q_0^*(\eta(E))$ joining the center of $\Omega_{\eta(E)}$ and a boundary point of $Q_0^*(\eta(E))$ such that $\beta_0\subset f_t(\beta_0)$.
\begin{figure}[htbp]
	\centering
	\includegraphics[width=16cm]{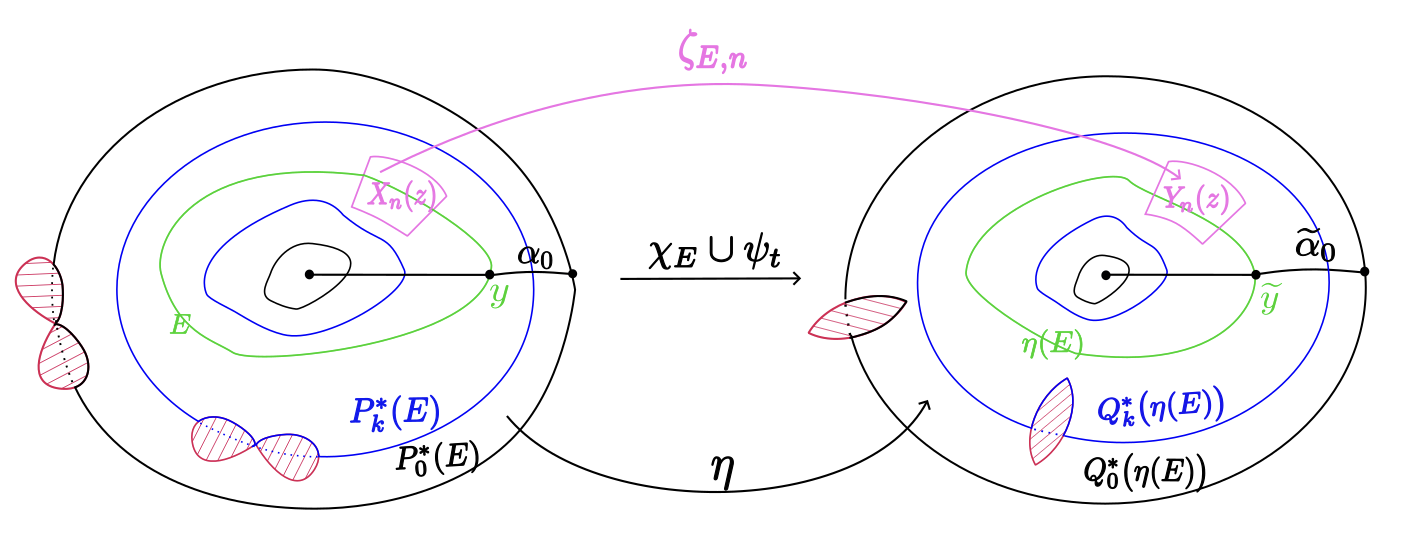}
	\caption{}
	\label{fig:8}
\end{figure}
		
Set $X_0:=P_0^*(E)\setminus(\overline{\Omega_E(1/2)}\cup \g_0)$.
For each $n \geq 1$, let $\mathcal X_n(E)$ denote the components of $f^{-n}(X_0)$ which intersect $E$, or equivalently, the components of $$P^*_{n}(E)\sm \big(\overline{\Omega_E(1/2d_0^n)}\cup f^{-n}(\g_0)\big),$$
where $d_0={\rm deg}(f|_E)$. Then each element of $\mathcal X_n(E)$  is a disk.
		
Let $z_*$ denote the unique intersection point of $\overline{\g_0}$ and $E$. If $z \in E \setminus \bigcup_{i \geq 0} f^{-i}(z_*)$, then it is contained in  a unique component of $\mathcal X_n(E)$,  denoted by $X_n(z)$, for each $n\geq1$. If $z \in E \cap f^{-i_0}(z_*)$ for some $i_0\geq0$, there are two adjacent components of $\mathcal X_n(E)$ such that $z$ belongs to their common boundary, for each $n > i_0$. In this case, denote $X_n(z)$ the union of the two components. It is worth noting that $\overline{X_n(z)}$ contains all points of the Julia set in a neighborhood of $z$.
		
Similarly, we can define puzzles $\mathcal Y_n(\eta(E))$ for $(f_t,\eta(E))$. Set $Y_0:=Q_0^*(\eta(E))\setminus(\overline{\Omega_{\eta(E)}(1/2)}\cup \beta_0)$.
For each $n \geq 1$, let $\mathcal Y_n(E)$ denote the components of $f_t^{-n}(Y_0)$ which intersect $\eta(E)$. For any $w\in \eta(E)$, the puzzle piece $Y_n(z)$ is  also  similarly defined as above. 
		
Since $\chi_E\cup\psi_t:\partial X_0\to\partial Y_0$ is a homeomorphism, and both $\chi_E$ and $\psi_t$ are conjugations between $f$ and $f_t$ on the corresponding definition domains, it follows that $$\chi_E\cup\psi_t:\bigcup_{X\in \mathcal X_n(E)} \partial{X}\longrightarrow \bigcup_{Y\in \mathcal Y_n(\eta(E))}\partial Y$$ is a homeomorphism for every $n\geq1$, and
\[(\chi_E\cup\psi_t)\circ f(w)=f_t\circ (\chi_E\cup\psi_t)(w),\ \, w\in \bigcup_{X\in \mathcal X_n(E)} \partial{X}.\]
Then the map $\chi_E\cup\psi_t$ induces a bijection $\zeta_{E,n}: \mathcal X_n(E)\to \mathcal Y_n(E)$ defined by
\[\zeta_{E,n}(X_n)=Y_n\text{ if }(\chi_E\cup\psi_t)(\partial X_n)=\partial Y_n\text{ (see Figures \ref{fig:7} and \ref{fig:8})},\]
and it holds that
	\begin{equation}\label{eq:00}
			\text{ $\zeta_{E,n-1}\circ f(X_n)=f_t\circ\zeta_{E,n}(X_n) \quad \text{for all }  X_n\in\mathcal X_n(E)$.}
		\end{equation}
		
Suppose now that $E'\in \mathcal E_f$ is preperiodic, i.e., $f^k(E')=E$ is periodic for some $k\geq1$. By lifting $\chi_E$ under $f^k$ and $f_t^k$, we obtain a conformal map $\chi_{E'}:\Omega_{E'}\to \Omega_{\eta(E')}$. Let $z(E')\in E'$ be a point such that $f^k(z(E))$ is the unique intersection of $\overline{\g_0}$ and $E$. Then there is a unique lift $\chi_{E'}$ of $\chi_E$ making $\chi_{E'}\cup \psi_t$ continuous at $z(E')$. This is the specific one for $E'$.
		
Then for every $n\geq0$, we define $\mathcal X_n(E')$ to be the collection of components of $f^{-k}(\mathcal X_n(E))$ that intersect $E'$, and  $\mathcal Y_n(\eta(E'))$ to be the collection of components of $f_t^{-n}(\mathcal Y_n(\eta(E)))$ that intersect $\eta(E')$. Moreover, the bijection $\zeta_{E,n}:\mathcal X_n(E)\to\mathcal Y_n(\eta(E))$ is lifted to a bijection $\zeta_{E',n}:\mathcal X_n(E')\to\mathcal Y_n(\eta(E'))$.
For any $z$ belongs to $ E'$ or $\eta(E')$, the set $X_n(z)$ or $Y_n(z)$ is defined similar to the periodic case.
		
\begin{claim}\label{claim:5}
For any $E \in \mathcal{E}_f$, $z \in E$, and $w \in \eta(E)$, we have:
\[\left\{
\begin{array}{llll}
	\bigcap_{n \geq 1} \overline{X_n(z)} = \{z\}, & \hbox{if $z \in E \setminus \bigcup_{i \geq 0} f^{-i}(z_*)$;} \\[3pt]
	\bigcap_{n > i_0} \overline{X_n(z)} = \{z\}, & \hbox{if $z \in E \cap f^{-i_0}(z_*)$ for some $i_0 \geq 0$;} \\[3pt]
	\bigcap_{n \geq 1} \overline{Y_n(w)} = \{w\}, & \hbox{if $w \in \eta(E) \setminus \bigcup_{i \geq 0} f_t^{-i}(\psi_t(z_*))$;} \\[3pt]
	\bigcap_{n > i_0} \overline{Y_n(w)} = \{w\}, & \hbox{if $w \in \eta(E) \cap f_t^{-i_0}(\psi_t(z_*))$ for some $i_0 \geq 0$.} 
\end{array}
\right. \]

\end{claim}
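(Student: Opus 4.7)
The plan is to reduce all four assertions to a combinatorial density fact: that the $f$-preimages of $z_*$ (resp.\ the $f_t$-preimages of $\psi_t(z_*)$) become dense on the Jordan curve $E$ (resp.\ $\eta(E)$). Since the four statements are structurally identical, I describe only the $X_n(z)$-case; the $Y_n(w)$-case follows by replacing $\Omega_E$, $z_*$, and $\varphi_1$ by $\Omega_{\eta(E)}$, $\psi_t(z_*)$, and the B\"ottcher coordinate of $\Omega_{\eta(E)}$.

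First I would dispose of the easy reductions. When $E$ is a singleton, $\mathcal X_n(E)=\{P_n^*(E)\}$ and the identity $\bigcap_{n\ge 0}\overline{P_n^*(E)}=\widehat E=\{z\}$ recorded earlier finishes the job. When $E'$ is preperiodic non-singleton with $f^k(E')=E$ periodic, the simple-parabolic hypothesis ensures the entire orbit $E',f(E'),\ldots,E$ avoids the critical set of $f$, so $f^k$ is a local homeomorphism on a neighborhood of $E'$; since $\mathcal X_n(E')$ consists of components of $f^{-k}$ of pieces of $\mathcal X_n(E)$ by construction, shrinking for $X_n(\cdot)$ at $f^k(z)\in E$ pulls back to shrinking for $X_n(\cdot)$ at $z\in E'$. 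Hence I may assume $f(E)=E$.

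For the periodic case, the key observation is that $\overline{\Omega_E(1/2d_0^n)}$ is a closed neighborhood of $E$ inside $\overline{\Omega_E}$, so its removal from $P_n^*(E)$ separates what remains into an inner part in $\Omega_E$ (whose closure avoids $E$) and outer parts in $P_n^*(E)\setminus\overline{\Omega_E}$ (whose closures hit $E$); only the latter constitute $\mathcal X_n(E)$. Thus $X_n(z)\subset P_n^*(E)\setminus\overline{\Omega_E}$ is an \emph{outer wedge} whose $E$-boundary is the arc of $E$ between two $f^n|_E$-preimages of $z_*$. Now fix $w\in\widehat{\mathbb C}\setminus\{z\}$ and split into three cases. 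If $w\in\Omega_E$, then $X_n(z)\subset\widehat{\mathbb C}\setminus\Omega_E$ (which is closed) forces $\overline{X_n(z)}\cap\Omega_E=\emptyset$ for every $n$, so $w\notin\overline{X_n(z)}$. If $w\notin\overline{\Omega_E}$, the identity $\bigcap_{n\ge 0}\overline{P_n^*(E)}=\overline{\Omega_E}$ yields some $n_0$ with $w\notin\overline{P_{n_0}^*(E)}\supset\overline{X_{n_0}(z)}$. If $w\in E\setminus\{z\}$, since $E$ is a Jordan curve avoiding $C(f)$ the restriction $f|_E$ is an unbranched degree-$d_0$ self-cover, topologically conjugate to $\zeta\mapsto\zeta^{d_0}$ on $\mathbb S^1$; preimages of the fixed point $z_*$ are therefore dense on $E$, so for large $n$ some such preimage lies strictly between $z$ and $w$ on $E$, placing $w$ outside the $E$-arc of $\overline{X_n(z)}$ and hence outside $\overline{X_n(z)}$ itself.

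The main technical care I anticipate is the exceptional case $z\in f^{-i_0}(z_*)$, in which $X_n(z)$ is defined as the union of two adjacent outer wedges rather than one. The same three-way argument applies verbatim: the $E$-boundary of the union is the concatenation of two adjacent arcs of $E$ between preimages of $z_*$, whose total length still shrinks to zero by the density argument, so every $w\neq z$ is eventually excluded. A secondary point is the verification that $f|_E$ is topologically a degree-$d_0$ self-cover of $\mathbb S^1$, which relies only on $E$ being a Jordan curve disjoint from $C(f)$; both ingredients are built into the definition of a simple parabolic map.
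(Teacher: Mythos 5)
Your overall strategy—reduce the shrinking of $\overline{X_n(z)}$ to density of $\bigcup_n f^{-n}(z_*)$ on the Jordan curve $E$ and split a competitor point $w\ne z$ into $w\in\Omega_E$, $w\notin\overline{\Omega_E}$, and $w\in E$—is genuinely different from the paper's proof, which instead builds nested annuli $X_{n_i}(z)\setminus\overline{X_{n_i+k_i}(z)}$ of uniformly bounded-below modulus and invokes Grötzsch's inequality for generic $z$, and a separate repelling-petal/Denjoy-Wolff argument for the parabolic preimages. Your outer-shrinking step ($\bigcap_n\overline{P_n^*(E)}=\widehat E$) and the density argument on $E$ are sound ideas, so the approach is in principle a viable alternative.

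However, there is a genuine error in your geometric setup, and it invalidates your Case 1. You assert that ``$\overline{\Omega_E(1/2d_0^n)}$ is a closed neighborhood of $E$ inside $\overline{\Omega_E}$'' and hence that $X_n(z)\subset P_n^*(E)\setminus\overline{\Omega_E}$ is an ``outer wedge.'' This is not what the definition gives. The set $\Omega_E(s)$ is a \emph{sub-disk around the superattracting fixed point}, and as $n\to\infty$ the level set $\Omega_E(\cdot)$ appearing in $\mathcal X_n(E)$ \emph{grows} toward $\Omega_E$. The pieces $X_n(z)$ are therefore wedges that \emph{straddle} the Jordan curve $E$: each contains an arc of $E\setminus f^{-n}(z_*)$ together with adjacent collar regions both inside $\Omega_E$ (between $E$ and the Böttcher level set) and outside $\widehat E$ (between $E$ and $\partial P_n^*(E)$). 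This is forced by the paper's own statement that each $z\in E\setminus\bigcup f^{-i}(z_*)$ \emph{lies in} a unique piece $X_n(z)$; if the pieces were entirely outside $\overline{\Omega_E}$ they could not contain points of $E$ at all, and the Grötzsch argument in the paper, which requires $\overline{X_{n+k}(z)}\Subset X_n(z)$, would also be impossible since the two closures would share boundary arcs on $E$. Consequently the implication ``$w\in\Omega_E$ and $X_n(z)\subset\widehat{\mathbb C}\setminus\Omega_E$, so $w\notin\overline{X_n(z)}$'' fails: $X_n(z)$ does meet $\Omega_E$. To repair this case you must instead use that the inner Böttcher level disk exhausts $\Omega_E$, so a fixed $w\in\Omega_E$ eventually lies in the interior of the excluded disk and hence outside $\overline{X_n(z)}$ for $n$ large.

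A secondary, smaller point: the claim that $f|_E$ is conjugate to $\zeta\mapsto\zeta^{d_0}$ on $\mathbb S^1$ is true but requires more than ``$E$ is a Jordan curve disjoint from $C(f)$''; one needs the absence of wandering intervals, which here comes from McMullen's theorem applied to $(f,E)$ and the resulting geometrically finite model whose Julia set is a Jordan curve—worth spelling out if you pursue the density route rather than the paper's modulus estimate.
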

\begin{proof}
Suppose $z \in E \setminus \bigcup_{i \geq 0} f^{-i}(z_*)$. There exists an integer $k_0 \geq 1$ such that $X_1(z) \setminus \overline{X_{1+k_0}(z)}$ is an annulus. Choose integers $n_1 > 1 + k_0$ and $k_1 \geq 1$ for which $X_{n_1}(z) \setminus \overline{X_{n_1 + k_1}(z)}$ is also an annulus. 

If $k_1 = k_0$, the map
\[f^{n_1 - 1}: X_{n_1}(z) \setminus \overline{X_{n_1 + k_0}(z)} \to X_1(z) \setminus \overline{X_{1 + k_0}(z)}\]
is conformal, since $X_1(z)$ contains no critical points. These two annuli therefore have the same modulus. If $k_1 < k_0$, then $X_{n_1}(z) \setminus \overline{X_{n_1 + k_0}(z)}$ is also an annulus and has the same modulus as $X_1(z) \setminus \overline{X_{1 + k_0}(z)}$. If $k_1 > k_0$, then $X_{n_1}(z) \setminus \overline{X_{n_1 + k_1}(z)}$ has a larger modulus than $X_1(z) \setminus \overline{X_{1 + k_0}(z)}$.

Next, choose integers $n_2 > n_1 + \max\{k_0, k_1\}$ and $k_2 \geq 1$ such that $X_{n_2}(z) \setminus \overline{X_{n_2 + k_2}(z)}$ is also an annulus. Repeating this process, we obtain a nested sequence $X_1(z) \supset X_{n_1}(z) \supset \dots$, where each annulus $X_{n_i}(z) \setminus \overline{X_{n_i + \max\{k_{i-1}, k_i\}}(z)}$ has a modulus greater than or equal to that of $X_1(z) \setminus \overline{X_{1 + k_0}(z)}$.\vskip 0.1cm		

By Grötzsch's inequality,
\[\operatorname{mod}\Big( X_1(z) \setminus \bigcap_{n \geq 1} \overline{X_n(z)} \Big) \geq \sum_{i=1}^{\infty} \operatorname{mod}\left( X_{n_i}(z) \setminus \overline{X_{n_i + \max\{k_{i-1}, k_i\}}(z)} \right) = +\infty.\]
It follows that $\bigcap_{n \geq 1} \overline{X_n(z)} = \{z\}$.\vskip 0.1cm		
		
Now we assume $z \in E \cap f^{-i_0}(z_*)$ for some $i_0 \geq 0$. If $z \notin \bigcup_{\ell \geq 0} f^{-m}(x^*)$, then the proof that $\bigcap_{n \geq 1} \overline{X_n(z)} = \{z\}$ is identical to the case above. Suppose instead that $z \in f^{-m_0}(x^*)$ for some $m_0 \geq 0$. We first consider the subcase $z = x^*$. The following proof is based on \cite[Proposition 2]{Z3} by considering the local dynamics of the parabolic fixed point. 
	
Choose an integer $n_0 \geq 0$ such that $\partial X_n(x^*)$ contains only the fixed point $x^*$ for all $n \geq n_0$. Consider the holomorphic map
\[h := f^{-1}: X_{n_0}(x^*) \to X_{n_0+1}(x^*) \subset X_{n_0}(x^*),\]
which extends continuously to the boundary $\partial X_{n_0}(x^*)$. By \cite[Lemma 5.5]{M}, the iterates $h^n$ converge uniformly to the unique boundary fixed point $x^*$ on every compact subset of $X_{n_0}(x^*)$.

Select an integer $k_0 \geq n_0$ and a neighborhood $D$ of $x^*$ such that $\big( \overline{X_{k_0}(x^*)} \setminus \{x^*\} \big) \cap D$ is contained in the repelling petal at $x^*$. For any $y \in \overline{X_{k_0}(x^*)} \setminus \{x^*\}$, we may shrink $D$ (if necessary) to ensure $y \notin D$. Let 
\[B_1 := \overline{X_{k_0}(x^*)} \setminus D \quad \text{ and} \quad B_2 := \big( \overline{X_{k_0}(x^*)} \setminus \{x^*\} \big) \cap D.\]
 Since $B_1$ is a compact subset of $X_{n_0}(x^*)$, there exists a sufficiently large integer $M$ such that $h^n(B_1) \cap B_1 = \emptyset$ for all $n \geq M$. This implies $y \notin \bigcap_{n \geq M} h^n(B_1)$. Additionally, since $B_2$ lies in the repelling petal at $x^*$, we have $y \notin \bigcap_{n \geq M} h^n(B_2)$. Combining these results,
\[y \notin \Big( \bigcap_{n \geq M} h^n(B_1) \big) \cup \Big( \bigcap_{n \geq M} h^n(B_2) \big) = \bigcap_{n \geq M} h^n\Big( \overline{X_{k_0}(x^*)} \setminus \{x^*\} \Big) = \bigcap_{n \geq 1} \overline{X_n(x^*)} \setminus \{x^*\}.\]
It follows that $\bigcap_{n \geq 1} \overline{X_n(x^*)} = \{x^*\}$.

For any $z \in E \cap f^{-i_0}(x^*)$, we have
\[\bigcap_{n > i_0} \overline{X_n(z)} \subset \bigcap_{n > i_0} \overline{X_n\big(f^{-i_0}(x^*)\big)} = f^{-i_0}\Big( \bigcap_{n > i_0} \overline{X_n(x^*)} \Big) = f^{-i_0}(x^*).\]
Since $f^{-i_0}(x^*)$ is a set of discrete points, this implies $\bigcap_{n > i_0} \overline{X_n(z)} = \{z\}$.\vskip 0.1cm		

By the same reasoning, the conclusion stated in the claim holds for any $w \in \eta(E)$.
\end{proof}
		
Now, we can define a map $\phi:J(f)\to J(f_t)$  based on the puzzles constructed above.
		
For any $x\in J(f)$, it belongs to  an element $E$ of $\mathcal E_f$. By Lemma \ref{lem:component}, $E$ is a singleton if and only if $\eta(E)$ is a singleton. Combining  Claim \ref{claim:5},  we define $\phi:J(f)\to J(f_t)$ to be
		\[\phi(x)=\left\{
		\begin{array}{lll}
			\bigcap_{n\geq0}\overline{Q_n^*(\eta(E))}, & \hbox{if $E$ is a singleton;} \\[3pt]
			\bigcap_{n\geq1}\overline{\zeta_{E,n}(X_n(x))}, & \hbox{if $x\in E\sm\bigcup_{i \geq 0}f^{-i}(z_*)$;} \\[3pt]
			\bigcap_{n> i_0}\overline{\zeta_{E,n}(X_n(x))}, & \hbox{if $x\in E\cap f^{-i_0}(z_*)$ for some $i_0\geq 0$.}
		\end{array}
		\right.\]
		
Since $\eta:\mathcal E_f\to \mathcal E_{f_t}$ and $\zeta_{E,n}:\mathcal X_n(E)\to \mathcal Y_n(E),E\in\mathcal E_f,n\geq1$, are bijections, it is easy to verify that $\phi_t:J(f)\to J(f_t)$ is a bijection. According to formulas \eqref{eq:99} and \eqref{eq:00}, we also have that $\phi_t\circ f=f_t\circ\phi_t$  on $J(f)$. So it remains to prove the continuity of $\phi$.\vskip 0.1cm
		
Fix a point $x_{0} \in E$. If $E$ is a singleton, then $\text{diam}\big(\overline{Q^*_n(\eta(E))}\big)\to 0$. For any $x\in J(f)$ such that $x \to x_0$, we have $x \in \overline{P^*_M(E)}$ for some large enough integer $M$, hence $\phi(x) \in \overline{Q^*_M\big(\eta(E)\big)}$. Thus $\phi(x)\to\phi(x_0)$. If $E$ is not a singleton and fixed, then $\text{diam}\big(\overline{Y_n(\phi(x_0))}\big)\to 0$. For any $x\in J(f)$ such that $x\to x_0$, we have $x\in \overline{X_M(x_0)}$ for some large enough integer $M$, hence $\phi(x) \in \overline{Y_M\big(\phi(x_0)\big)}$. Thus $\phi(x)\to\phi(x_0)$. By the definition of $\phi$, it is easy to see that $\phi$ is continuous on $\sigma_{f}^{-n}(E)$ for any $n\geq 1$.
\end{proof}
	
The proof of Theorem \ref{thm:1.2}  requires two rigidity results about Cantor Julia sets \cite{Z1,Z2}.
	
\begin{theoremB}{\rm(\cite{Z1})}\label{Z1}
Let $f$ and $\widetilde{f}$ be two rational maps with  Cantor Julia sets. If they are topologically conjugate on $\omC$, then they are quasiconformally conjugate on $\omC$.
\end{theoremB}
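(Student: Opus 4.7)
The plan is to promote the topological conjugacy $\phi$ to a quasiconformal one, first on the Fatou set via a puzzle-based construction and then across the Cantor Julia set by a removability argument. Since $J(f)$ is a Cantor set, $F(f)$ is a single completely invariant infinitely-connected Fatou domain $U$, and likewise $F(\widetilde f) = \widetilde U$; moreover $\phi$ carries $U$ to $\widetilde U$, matches critical orbits, and preserves the type of each periodic cycle, so the dynamics on $U$ is combinatorially the same as on $\widetilde U$. By Lemma \ref{lem:component}(1), no element of $\mathcal E_f$ is non-singleton (its orbit would meet a periodic critical component, impossible when $J$ is Cantor), so every Julia component is a point.

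First I would build compatible puzzles $\mathcal P_n$ and $\widetilde{\mathcal P}_n$ for $(f,U)$ and $(\widetilde f,\widetilde U)$, using linearization disks at the periodic cycles in the attracting/superattracting case and attracting petals with sepals attached in the parabolic case, as in Section \ref{sec:puzzle}. The topological conjugacy induces a depth-preserving bijection $\mathcal P_n \to \widetilde{\mathcal P}_n$ that intertwines $\sigma_f$ and $\sigma_{\widetilde f}$. By the shrinking property in (P3) of Lemma \ref{puzzle} and its analogue for $\widetilde f$, $\bigcap_n P_n(x) = \{x\}$ for every $x \in J(f)$, so the puzzle encodes the Cantor structure.

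Next I would construct a quasiconformal conjugacy on $U$. On the fundamental annulus that is the closure of a depth-$0$ puzzle piece minus the interior of the depth-$1$ puzzle pieces inside it, I build a quasi-regular map $\Phi_0$ using Lemmas \ref{lem2.3} and \ref{lem2.4}, chosen to conjugate $f$ to $\widetilde f$ on the two boundary curves in a manner consistent with $\phi$. Pulling $\Phi_0$ back under iterates of $f$ and $\widetilde f$ yields quasiconformal maps on every annulus between consecutive puzzle-piece boundaries, with dilation uniformly bounded by $\|\mu_{\Phi_0}\|_\infty$ since the pull-backs are holomorphic off the fundamental annulus. Gluing these produces a quasiconformal $\Phi: U \to \widetilde U$ conjugating $f$ to $\widetilde f$.

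Finally I extend $\Phi$ to $\omC$ by setting $\Phi|_{J(f)} = \phi|_{J(f)}$; continuity across $J(f)$ follows from the shrinking of puzzle pieces, and the extended map is globally quasiconformal because a totally disconnected compact subset of $\omC$ is removable for bounded quasiconformal maps, so any continuous extension of a qc map on the complement of a Cantor set is itself qc. I expect the main obstacle to be the parabolic case: near a parabolic cycle, naive pull-backs of $\Phi_0$ can accumulate distortion along orbits approaching the parabolic point, and the sepal geometry forces the puzzle-piece boundaries to meet on $J(f)$. To overcome this one chooses $\Phi_0$ compatible with the Fatou coordinates of both $f$ and $\widetilde f$ at each parabolic cycle, in the spirit of the plumbing surgery of Section \ref{sec:plumbing}, so that the pull-backs match in the parabolic coordinate and the global dilation stays bounded.
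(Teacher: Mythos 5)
The paper does not prove Theorem~B; it is cited verbatim from Zhai's paper \cite{Z1} and used as a black box in the proof of Theorem~\ref{thm:1.2}, so there is no proof within this paper to compare against. Your high-level strategy (a puzzle-based pullback on the Fatou domain followed by extension across the Cantor Julia set) is plausibly in the spirit of \cite{Z1}, but the argument as written has two concrete gaps.

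The decisive error is the removability claim: ``a totally disconnected compact subset of $\omC$ is removable for bounded quasiconformal maps'' is false. A linear Cantor set $E$ of positive Lebesgue measure is not removable, since one can integrate a nontrivial Beltrami coefficient supported on $E$ to produce a homeomorphism of $\omC$ that is conformal off $E$ yet genuinely quasiconformal globally. Even for measure-zero compacta, removability for quasiconformal homeomorphisms is not automatic; what one actually needs is that $J(f)$ is negligible for extremal distance (NED, in the sense of Ahlfors--Beurling). For Cantor Julia sets this is established via the puzzle structure: around every $x\in J(f)$ one produces a nested sequence of annuli $P_n(x)\setminus \overline{P_{n+1}(x)}$ whose moduli have divergent sum, exactly the kind of estimate that appears in the proof of Claim~\ref{claim:5} in this paper, following Branner--Hubbard, Qiu--Yin \cite{QY} and Zhai \cite{Z3}. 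This divergence is what forces $|J(f)|=0$, gives the NED property, and hence allows the qc extension; invoking a false general statement about Cantor sets short-circuits the actual work.

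The second gap is the parabolic case. You correctly identify that puzzle boundaries meet on $J(f)$ at iterated preimages of the parabolic point, so the ``annuli between consecutive puzzle-piece boundaries'' degenerate there and naive pullbacks of $\Phi_0$ accumulate unbounded dilatation along orbits approaching the parabolic point. But ``choose $\Phi_0$ compatible with the Fatou coordinates of both $f$ and $\widetilde f$'' is not a fix: you would need to check that the horn-map data of the two parabolic points can actually be matched by a quasiconformal map with uniformly bounded dilatation under iteration, and this is exactly the nontrivial content that the plumbing surgery of Cui--Tan \cite{CT} (and Sections~3--5 of this paper) is designed to circumvent. As written, the claim that the ``global dilation stays bounded'' in the parabolic case has no supporting argument. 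Finally, a minor point: your assertion that the topological conjugacy ``preserves the type of each periodic cycle'' needs qualification --- it distinguishes (super)attracting from parabolic (since parabolic points lie on $J$), but does not by itself distinguish superattracting from attracting, which you later implicitly use when transporting B\"ottcher/linearization coordinates.
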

	
\begin{theoremC}{\rm(\cite{Z2})}\label{Z2}
Let $f$ be a rational map with a Cantor Julia set. Then $f$ carries no invariant line fields on its Julia set.
\end{theoremC}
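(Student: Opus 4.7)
My proposal is to prove Theorem C by contradiction, combining the puzzle machinery developed in the excerpt with a standard Lebesgue density/rescaling argument, modified to accommodate the parabolic fixed point.

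Step 1 (Puzzles on a Cantor Julia set). Suppose $f$ has Cantor Julia set $J(f)$. Then the Fatou set consists of the immediate basin of a (super)attracting or parabolic cycle and its preimages, and up to iteration we may reduce to the case of a fixed domain; in particular, $(f,U_f)$ admits the puzzle apparatus of Section 3 (or its variant $\mathcal{P}_n^{*}$ from Section 5). Since every Julia component is a singleton, the shrinking statement (as in Claim \ref{claim:5}) gives, for each $x\in J(f)$, a nested sequence of puzzle pieces $X_n(x)$ with $\bigcap_n \overline{X_n(x)}=\{x\}$, and with a definite lower bound on $\mathrm{mod}(X_n(x)\setminus \overline{X_{n+k_n}(x)})$ for infinitely many $n$ (via Gr\"otzsch, as in Claim \ref{claim:5}).

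Step 2 (Rescaling at a density point). Assume, toward contradiction, that $f$ carries a measurable invariant line field $\mu$ on a set $E\subset J(f)$ of positive planar measure. Fix a Lebesgue density point $x_0$ of $E$. I would use the annuli of definite modulus around $X_n(x_0)$ to build univalent pullback branches: find $m_n\to\infty$ such that an iterate $f^{m_n}$ sends a neighborhood $U_n$ of $x_0$ univalently, with Koebe-controlled distortion, onto a macroscopic region. Concretely, take $g_n$ to be the inverse branch of $f^{m_n}$ mapping a fixed disk $D\subset\widehat{\mathbb{C}}$ onto $U_n\ni x_0$; the Koebe distortion theorem applies because the definite modulus annulus surrounding $U_n$ lifts univalently. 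Invariance gives $g_n^{*}\mu=\mu|_{U_n}$ a.e., and the density-point hypothesis forces $g_n^{*}\mu$ to weak-$*$-converge to a constant (almost-everywhere-defined) line field $\mu_\infty$ on $D\cap J(\widetilde{g})$, where $\widetilde{g}$ is a Carath\'eodory/rescaling limit of $f^{m_n}\circ g_n^{-1}$-type iterates.

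Step 3 (Ruling out the limit line field). The rescaling limit of the dynamics near such a univalent pullback is a nontrivial holomorphic map that must preserve $\mu_\infty$. On the one hand, $\mu_\infty$ is constant by density; on the other hand, any two distinct inverse branches at $x_0$ (which exist in abundance because the Julia set is perfect and every Julia component is a point, so the grand orbit of $x_0$ accumulates everywhere on $J(f)$) yield pullbacks of the same constant field that differ by derivatives with non-real argument, forcing $\mu_\infty\equiv 0$ and contradicting $|\mathrm{supp}(\mu)|>0$. This is the McMullen-style invariant line field contradiction adapted to the Cantor setting.

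Step 4 (Handling the parabolic fixed point). The main obstacle is that the argument above presupposes expansion, which fails at the parabolic fixed point $x^{*}$ and its grand orbit. Two points must be addressed: (i) the orbit of $x_0$ may accumulate on $x^{*}$, spoiling bounded distortion of the pullback branches; (ii) $x_0$ itself may lie in $\bigcup_{k\geq 0}f^{-k}(x^{*})$. For (ii), one simply discards a countable set (measure zero) and chooses $x_0$ off of it. For (i), I would use Fatou coordinates on a repelling petal $P_{\mathrm{rep}}$ at $x^{*}$: there $f^{-1}$ is conjugate to $w\mapsto w+1$, so inverse branches acting inside $P_{\mathrm{rep}}$ have uniformly bounded hyperbolic distortion and controlled Euclidean distortion, which is exactly the substitute for expansion needed to run the Koebe argument through the parabolic region. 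Combining this with the hyperbolic-type expansion away from a neighborhood of $x^{*}$ yields the required bounded distortion on the whole pullback chain.

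The technical heart, and the step I expect to be the hardest, is Step 4: quantifying distortion of inverse branches whose orbits travel repeatedly through a definite neighborhood of $x^{*}$, so that the Koebe-type control in Step 2 survives uniformly. Once that is in place, Steps 2 and 3 yield the desired contradiction, proving $f$ carries no invariant line field on $J(f)$.
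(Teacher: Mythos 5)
The paper does not prove this statement at all: Theorem C is imported verbatim from Yin--Zhai \cite{Z2}, so there is no internal proof to compare against, and your attempt has to stand on its own as a proof of their theorem. As such, it has a genuine gap. The heart of the matter is that a rational map with a Cantor Julia set may have critical points \emph{in} the Julia set, with arbitrarily complicated recurrence, and your Steps 1--2 assume this difficulty away. The Gr\"otzsch-type modulus bound you invoke ``as in Claim \ref{claim:5}'' is proved in the paper only for points on a fixed non-singleton Julia component, where the relevant puzzle pieces contain no critical points and all pullbacks are univalent; for a general Cantor Julia set the nested puzzle annuli around a Lebesgue density point need not have definite (or even summable) moduli when the critical tableau is persistently recurrent --- this is exactly the regime where the Branner--Hubbard argument breaks down and where the proof of shrinking requires the Qiu--Yin/Kozlovski--Shen--van Strien machinery (enhanced nests, covering lemmas, bounded-degree complex bounds); see \cite{QY,Z3}. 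Consequently your claim that one can ``find $m_n\to\infty$ such that $f^{m_n}$ sends a neighborhood $U_n$ of $x_0$ univalently, with Koebe-controlled distortion, onto a macroscopic region'' is unjustified at exactly the points where the theorem has content: if such univalent expansion were available at almost every point, the Julia set would be measure-theoretically tame and the statement would be close to trivial. The actual proof in \cite{Z2} is organized around this critical-recurrence dichotomy (non-recurrent versus recurrent critical points in $J$), producing bounded-degree, not univalent, maps to a definite scale and then running a McMullen-type almost-continuity argument \cite{Mc}; none of that structure appears in your sketch.

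Two further, smaller points. First, your Step 4 misidentifies the main obstacle: Theorem C does not assume a parabolic point, and when one is present the parabolic grand orbit is countable and the Fatou coordinate estimates you describe are routine; the hard analysis is the critical recurrence issue above, not the passage near $x^*$. Second, Step 3 is too loose as stated: the standard contradiction is not that ``two inverse branches differ by derivatives with non-real argument'' (a constant line field \emph{is} preserved by many nonconformal linear maps, e.g. all real dilations), but that a univalent-limit renormalization of the dynamics would make the line field holomorphic (dually, a nonzero quadratic differential) invariant under a degree $\geq 2$ branched map on a neighborhood meeting $J$, which is impossible; with critical points in $J$ one can argue more directly that a constant field cannot be invariant under a map with a critical point in the support. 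As written, the proposal would need the entire Yin--Zhai (and Qiu--Yin) apparatus inserted at Step 2 before Steps 3--4 could be run.
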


\begin{proof}[Proof of Theorem \ref{thm:1.2}]
		
Let $f$ be a rational map with a Cantor Julia set of degree $d$ and a parabolic fixed point. Then $\mathcal E_f$ contains no periodic critical elements.
		
By Propositions \ref{model} and \ref{pro:parabolic}, there exists a sequence of simple attracting maps $\{f_{t_j}\}_{j \geq 1}$ that converges uniformly on $\widehat{\mathbb{C}}$ to a simple parabolic map $g$. Moreover, $(f, U_f)$ and $(g, U_g)$ are conjugate via a conformal map $\psi: U_f \to U_g$ that fixes three points in $U_f$. For each $j \geq 1$, the map $\sigma_f: \mathcal{E}_f \to \mathcal{E}_f$ is conjugate both to $\sigma_{f_{t_j}}: \mathcal{E}_{f_{t_j}} \to \mathcal{E}_{f_{t_j}}$ and to $\sigma_g: \mathcal{E}_g \to \mathcal{E}_g$. Since $\mathcal{E}_f$ contains no periodic critical elements, neither do $\{\mathcal{E}_{f_{t_j}}\}_{j \geq 1}$ nor $\mathcal{E}_g$. We thus conclude from Lemma \ref{lem:component} that all maps in $\{f_{t_j}\}_{j \geq 1}$ and the map $g$ have Cantor Julia sets.
		
According to Theorem \ref{thm:middle}, it is enough to prove that $f=g$.
		
As in Section \ref{sec:2.1}, let $U_0(f) \subset U_f$ denote a regular parabolic petal of $f$, and let $U_n(f)$ denote the component of $f^{-n}(U_0(f))$ containing $U_0(f)$ for each $n \geq 0$. There exists $N > 0$ such that $\deg\big(f: U_N(f) \to U_{N-1}(f)\big)=d$. For each $n \geq 0$, the depth-$n$ puzzle $\mathcal{P}_n(f)$ for $f$ is defined as the collection of all components of $\mathbb{C} \setminus U_{N+n}(f)$.

By the conformal conjugation $\psi$, we define $U_n(g) = \psi(U_n(f))$ for $n \geq 0$ and $\mathcal{P}_n(g)$ as the collection of all components of $\mathbb{C} \setminus U_{N+n}(g)$. Note that $\psi: \overline{U_n(f)} \to \overline{U_n(g)}$ is a homeomorphism for each $n \geq 0$. For any $P_n \in \mathcal{P}_n(f)$, there exists a unique puzzle piece $Q_n \in \mathcal{P}_n(g)$ satisfying $\psi(\partial P_n) = \partial Q_n$, which may be written as $\widehat{\psi(\partial P_n)}$.

Using the correspondence between the puzzles $\mathcal{P}_n(f)$ and $\mathcal{P}_n(g)$, we extend $\psi$ to the Julia set $J(f)$ as follows. For any $x \in J(f)$, there exists a unique puzzle piece $P_n \in \mathcal{P}_n(f)$ for each $n$ such that $x \in P_n$. Then $\bigcap_{n \geq 0} P_n = \{x\}$. Define
\[\psi(x) = \bigcap_{n \geq 0} \widehat{\psi(\partial P_n)}.\]
This yields a global map $\psi: \widehat{\mathbb{C}} \to \widehat{\mathbb{C}}$.

By the shrinking property of the puzzle sequences $\{\mathcal{P}_n(f)\}_{n \geq 1}$ and $\{\mathcal{P}_n(g)\}_{n \geq 1}$, we can show that this extension $\psi$ is a homeomorphism on $\widehat{\mathbb{C}}$. The argument is similar to that in the proof of Theorem \ref{thm:middle}, so we omit the details. Moreover, the continuity of $\psi$ implies $\psi \circ f = g \circ \psi$ on $\widehat{\mathbb{C}}$, hence $\psi: \widehat{\mathbb{C}} \to \widehat{\mathbb{C}}$ is a topological conjugation between $f$ and $g$. We conclude from Theorem B that $\psi$ is a quasiconformal conjugation.
		
Since $\psi$  is conformal on $F(f)$ and has no invariant line field on $J(f)$ by Theorem C, it follows that the complex dilatation of $\psi$ is equal to $0$ almost everywhere in $\omC$. Consequently, $\psi$ is a conformal conjugation between $f$ and $g$. By the normalization property,  $\psi$ is the identity map, and therefore $f=g$.
\end{proof}
	
\vspace{3mm}

\noindent Ning Gao \\ Academy of Mathematics and Systems Science,\\ Chinese Academy of Sciences, Beijing, 100190, P. R. China.\\ gaoning@amss.ac.cn\vskip 0.24cm

\noindent Yan Gao\\School of Mathematical Sciences, \\Shenzhen University, Shenzhen, 518052, P. R. China.\\gyan@szu.edu.cn	\vskip 0.24cm

\noindent Wenjuan Peng \\State Key Laboratory of Mathematical Sciences, Academy of Mathematics and Systems Science,\\Chinese Academy of Sciences, Beijing, 100190, P. R. China.\\wenjpeng@amss.ac.cn
\end{document}